\newtoks\prt 
\newtheorem{thm}{Theorem}[section] 
\newtheorem{lemma}[thm]{Lemma} 
\newtheorem{prop}[thm]{Proposition} 
\newtheorem{example}[thm]{Example}
\theoremstyle{definition}
\def\eqn#1$$#2$${\begin{equation}\label#1#2\end{equation}}
\def\th{{\leavevmode\setbox1=\hbox{t}
  \hbox to \wd1{t\kern-0.6ex{\char039}\hss}}}
\def\dh{{\leavevmode\setbox1=\hbox{d}
  \hbox to 1.05\wd1{d\kern-0.4ex{\char039}\hss}}}
\def\=#1{\if #1u{\accent23u}\else 
\ifx #1d{\dh}\else 
\ifx #1t{\th}\else 
 {\accent20 #1}\fi\fi\fi} 
\def\'#1{\if #1i{\accent19\i}\else {\accent19 #1}\fi} 
\def\fra{\mathfrak{A}} 
\def\A{\mathcal A} 
\def\C{\mathcal C}
\def\F{\mathcal F}
\def\G{\mathcal G} 
\def\W{\mathcal W}
\def\L{\mathcal L} 
\def\U{\mathcal U} 
\def\V{\mathcal V} 
\def\H{\mathcal H} 
\def\M{\mathcal M}
\def\es{\mathbf S} 
\def\ce{\mathbb C}
\def\lin{Lindel\"of} 
\def\diam{\operatorname{diam}} 
\def\co{\operatorname{co}} 
\def\ep{\varepsilon} 
\def\K{\mathcal K} 
\def\en{\mathbb N} 
\def\er{\mathbb R}
\def\re{\operatorname{Re}} 
\def\im{\operatorname{Im}}
\def\r{|} 
\def\ov{\overline} 
\def \Ch {\operatorname{Ch}}
\def \Bos {\operatorname{Bos}} 
\def \Bas {\operatorname{Bas}} 
\def\Hs{\operatorname{Hs}}
\def \Bof {\operatorname{Bof}} 
\def \Baf {\operatorname{Baf}} 
\def\Hf{\operatorname{Hf}}
\def \ext {\operatorname{ext}}
\def\span{\operatorname{span}}
\def\wt{\widetilde}
\def \reg {\partial _{\kern1pt\text{reg}}}
\begin{document}

\title[Descriptive properties of elements of biduals of Banach spaces]{Descriptive properties of elements of biduals of Banach spaces}
\author{Pavel Ludv\'\i k and Ji\v r\'\i\  Spurn\'y}

\address{Department of Mathematical Analysis \\
Faculty of Mathematics and Physics\\ 
Charles University\\
Sokolovsk\'{a} 83, 186 \ 75\\Praha 8, Czech Republic}
\email{ludvik@karlin.mff.cuni.cz}
\email{spurny@karlin.mff.cuni.cz}

\subjclass[2010]{46B99,46A55,26A21}

\keywords{Baire and Borel functions, strongly affine functions, fragmentable functions, extreme points, $L_1$-preduals, Baire classes and intrinsic Baire classes of Banach spaces}

\thanks{The first author was supported by GA\v CR 401/09/H007 and SVV-2011-263316, the second author was supported in part by the grants 
GAAV IAA 100190901, GA\v CR  201/07/0388
and in part by the Research Project
MSM~0021620839 from the Czech Ministry of Education.
}

\begin{abstract}
If $E$ is a Banach space, any element $x^{**}$ in its bidual $E^{**}$ is an affine function on the dual unit ball $B_{E^*}$ that might possess variety of descriptive properties with respect to the weak* topology. We prove several results showing that descriptive properties of $x^{**}$ are quite often determined by the behaviour of $x^{**}$ on the set of extreme points of $B_{E^*}$, generalizing thus results of J.~Saint Raymond and F.~Jellett. We also prove several results on relation between Baire classes and intrinsic Baire classes of $L_1$-preduals which were introduced by S.A.~Argyros, G.~Godefroy and H.P.~Rosenthal in \cite[p.\,1047]{ArGoRo}. Also, several examples witnessing natural limits of our positive results are presented.
\end{abstract}
\maketitle

\section{Introduction and main results}

If $E$ is a (real or complex) Banach space, an element $x^{**}$ of its bidual may posses interesting descriptive properties if $x^{**}$ is understood as a function on the dual space endowed with the weak* topology. Since the dual unit ball $B_{E^*}$ is weak* compact, the set $\ext B_{E^*}$ of its extreme points is nonempty and its weak* closed convex hull is the whole unit ball. Hence one might expect that a behaviour of $x^{**}$ on the set $\ext B_{E^*}$ in some sense determines the behaviour of $x^{**}$ on $B_{E^*}$. The aim of our paper is to substantiate this general idea by presenting several results on transferring descriptive properties of $x^{**}\r_{\ext B_{E^*}}$ to $x\r_{B_{E^*}}$. To formulate our results precisely, we need to recall several notions.

Since the main results are mostly formulated for Banach spaces over real or complex field, we need to work with vector spaces over both real and complex numbers. So all the notions are considered, if not stated otherwise, with respect to the field of complex numbers. All topological space are considered to be Tychonoff (i.e, completely regular, see \cite[p.\,39]{EN}), in particular they are Hausdorff.

If $K$ is a compact  topological space, a \emph{positive Radon measure} on $K$ is a finite complete measure with values in $[0,\infty)$ defined at least on the $\sigma$-algebra of all Borel sets that is inner regular with respect to compact sets (see \cite[Definition~411H]{fremlin4}). A signed or complex measure $\mu$ on $X$ is a Radon measure if its total variation $|\mu|$ is Radon. We often write $\mu(f)$ instead of $\int f\, d\mu$. We denote as $\M(K)$, $\M^+(K)$ and $\M^1(K)$ the set of all Radon measures, positive Radon measures and probability Radon measures, respectively. Using the Riesz representation theorem we view $\M(K)$ as the dual space to the space $\C(K)$ of all continuous functions on $K$. Unless stated otherwise, we consider the space $\M(K)$ endowed with the weak* topology. A function $f:K\to\ce$ is \emph{universally measurable} if $f$ is $\mu$-measurable for every $\mu\in\M(K)$. If $\F$ is a family of functions, we write $\F^b$ for the set of all bounded elements of $\F$.

Let $X$ be a compact convex subset of a  locally convex space. Then any measure $\mu \in\M^1(X)$ has its unique \emph{barycenter} $x\in X$, i.e., the point $x\in X$ satisfying $\mu(f)=f(x)$ for each $f\in\fra^c(X)$ (here $\fra^c(X)$ stands for the space of all continuous affine functions on $X$). We write $\M_x(X)$ for the set of all probability measures with $x$ as the barycenter. The mapping $r:\M^1(X)\to X$ assigning to every probability measure on $X$ its barycenter is a continuous affine surjection, see \cite[Proposition~I.2.1]{alfsen} or \cite[Proposition~2.38]{book-irt}. A function $f:X\to\ce$ is called \emph{strongly affine} (or a function satisfying the \emph{barycentric formula}) if $f$ is universally measurable and $\mu(f)=f(r(\mu))$ for every $\mu\in\M^1(X)$. It is easy to deduce that any strongly affine function is bounded (see e.g. \cite[Lemma~4.5]{book-irt}).

If $E$ is Banach space, $B_{E^*}$ with the weak* topology is a compact convex set. We call an element $f\in E^{**}$ \emph{strongly affine} if its restriction to $B_{E^*}$ is a strongly affine function. We also mention that a continuous affine function $f$ on $B_{E^*}$, which satisfies $f(0)=0$ and $f(ix^*)=if(x^*)$ for $x^*\in B_{E^*}$, is in fact an element of $E$, i.e., there exists $x\in E$ with $f(x^*)=x^*(x)$ for $x^*\in B_{E^*}$.

Further we need to recall descriptive classes of functions in topological spaces. We follow the notation of \cite{spurny-ahm}.
If $X$ is a Tychonoff topological space, a \emph{zero set} in $X$ is an inverse image of a closed set in $\er$ under a continuous function $f:X\to\er$. The complement of a zero set is a \emph{cozero set}.  A countable union of closed sets is called an \emph{$F_\sigma$ set}, the complement of an $F_\sigma$ set is a \emph{$G_\delta$ set}. If $X$ is normal, it follows from Tietze's theorem that a closed set is a zero set if and only if it is also a $G_\delta$ set.
We recall that \emph{Borel sets} are members of the $\sigma$--algebra generated by the family of all open subset of $X$ and \emph{Baire sets} are members of the $\sigma$--algebra generated by  the family of all cozero sets in $X$. We write $\Bos(X)$ and $\Bas(X)$ for the algebras generated by open or cozero sets in $X$, respectively. 

A set $A\subset X$ is \emph{resolvable} (or an \emph{$H$-set}) if for any nonempty $B\subset X$ (equivalently, for any nonempty closed $B\subset X$) there exists a relatively open $U\subset B$ such that either $U\subset A$ or $U\cap A=\emptyset$. It is easy to see that the family  $\Hs(X)$ of all resolvable sets is an algebra, see e.g. \cite[\S\,12, VI]{kur}. Let $\Sigma_2(\Bas(X))$, $\Sigma_2(\Bos(X))$ and $\Sigma_2(\Hs(X))$ denote countable unions of sets from the respective algebras.

Let $\Baf_1(X)$ denote the family of all $\Sigma_2(\Bas(X))$-measurable function on $X$, i.e., the functions $f:X\to\ce$ satisfying $f^{-1}(U)\in\Sigma_2(\Bas(X))$ for all $U\subset \er$ open. Analogously we define families $\Bof_1(X)$ and $\Hf_1(X)$. 

Now we use pointwise limits to create higher hierarchies of functions. More precisely, if $\Phi$ is a family of functions on $X$, we define $\Phi_0=\Phi$ and, for each countable ordinal $\alpha$, $\Phi_\alpha$ consists of all pointwise limits of sequences from $\bigcup_{\beta<\alpha} \Phi_\beta$. Starting the procedure with $\Baf_1(X)$ and creating higher families $\Baf_\alpha(X)$ as pointwise limits of sequences contained in $\bigcup_{1\leq \beta<\alpha} \Baf_\beta(X)$, we obtain the hierarchy of \emph{Baire measurable} functions. Analogously we define, for $\alpha\in [1,\omega_1)$, families $\Bof_\alpha(X)$ and $\Hf_\alpha(X)$ of \emph{Borel measurable} functions and \emph{resolvably measurable} functions. (Theorem~5.2 in \cite{spurny-ahm} explains the term "measurability" in these definitions.)

If $X$ is a Tychonoff space and we start the inductive process with the family $\Phi_0=\Phi=\C(X)$, we obtain the families $\C_\alpha(X)$ of Baire-$\alpha$ functions on $X$, $\alpha<\omega_1$. Then the union $\bigcup_{\alpha<\omega_1}\C_\alpha(X)$ is the family of all \emph{Baire} functions. It is easy to see that $\C_1(X)=\Baf_1(X)$ (see Proposition~\ref{baf}) and thus $\C_\alpha(X)=\Baf_\alpha(X)$ for any $\alpha\in [1 ,\omega_1)$. 

Now we can state our first result concerning a preservation of descriptive properties. For separable Banach spaces and Baire functions, the results can be obtained from \cite[Corollaire~8]{sr-kvocient}.

\begin{thm}\label{main1}
Let $E$ be a (real or complex) Banach space and $f\in E^{**}$ be strongly affine. Then, 
\begin{itemize} 
\item for  $\alpha\in [1,\omega_1)$, $f\r_{\ov{\ext B_{E^*}}}\in \Hf_\alpha(\ov{B_{E^*}})$ if and only if $f\in  \Hf_\alpha( B_{E^*})$,
\item for $\alpha\in [1,\omega_1)$, $f\r_{\ov{\ext B_{E^*}}}\in \Bof_\alpha(\ov{B_{E^*}})$ if and only if $f\in  \Bof_\alpha( B_{E^*})$,
\item for  $\alpha\in [0,\omega_1)$, $f\r_{\ov{\ext B_{E^*}}}\in \C_\alpha(\ov{B_{E^*}})$ if and only if $f\in  \C_\alpha( B_{E^*})$.
\end{itemize}
\end{thm}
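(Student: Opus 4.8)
The implication from $f\in\Phi_\alpha(B_{E^*})$ to $f\r_{\ov{\ext B_{E^*}}}\in\Phi_\alpha(\ov{\ext B_{E^*}})$ (for each of the three hierarchies $\Phi\in\{\Hf,\Bof,\C\}$) is the routine one: the generating algebras $\Bas,\Bos,\Hs$ restrict to any closed subspace, and restriction commutes with pointwise limits, so the class of a function does not increase when it is restricted to $Y:=\ov{\ext B_{E^*}}$. Writing $X:=B_{E^*}$ (which is already weak* compact) and passing, in the complex case, to $\re f$ and $\im f$ (both strongly affine, since the barycentric formula is preserved by real-linear maps), it suffices to prove the converse for a bounded real strongly affine $f$.

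The plan for the converse is to lift the problem to the space of measures. Let $r_Y\colon\M^1(Y)\to X$ be the barycentric map restricted to measures carried by $Y$; it is a continuous affine surjection, surjectivity being exactly the statement that every $x\in X$ is the barycentre of a maximal measure, and such measures are carried by $\ov{\ext X}=Y$. The key identity is that for every $\nu\in\M^1(Y)$ strong affinity gives $f(r_Y(\nu))=\nu(f)=\nu(f\r_Y)$, so that $f\circ r_Y=\Lambda$, where $\Lambda(\nu):=\nu(f\r_Y)$. The first ingredient I would establish is an \emph{integration lemma}: for bounded $h$ the assignment $\nu\mapsto\nu(h)$ preserves each class, i.e.\ $h\in\Phi_\alpha(Y)$ implies $(\nu\mapsto\nu(h))\in\Phi_\alpha(\M^1(Y))$. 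This I would prove by transfinite induction on $\alpha$, the limit step being immediate from dominated convergence (here boundedness of $f$, hence of $h$, is used) and the base step being the interplay of integration with the generating algebra, with the weak* continuity of $\nu\mapsto\nu(\phi)$ for continuous $\phi$ handling the innermost case. Applied to $h=f\r_Y$ it yields $\Lambda\in\Phi_\alpha(\M^1(Y))$ whenever $f\r_Y\in\Phi_\alpha(Y)$.

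It remains to \emph{descend} the class of $\Lambda=f\circ r_Y$ from $\M^1(Y)$ back to $f$ on $X$ along $r_Y$, and this is the heart of the matter and the main obstacle. One cannot simply precompose with a section, since $r_Y$ admits no continuous section in general — a continuous choice of representing measures carried by $Y$ fails already for elementary non-simplicial $X$ — so the strong affinity of $f$ must enter essentially, not only through the identity $f\circ r_Y=\Lambda$. What I would use instead is that $r_Y$ is a continuous \emph{closed} surjection of compact spaces, hence a quotient map carrying closed sets to closed sets. For an open $G\subset\er$ the set $A:=f^{-1}(G)$ is $r_Y$-saturated with $r_Y^{-1}(A)=\Lambda^{-1}(G)$; writing the latter as the appropriate countable level-$\alpha$ combination of closed sets $C_n\subset\M^1(Y)$ and using $A=r_Y(r_Y^{-1}(A))=\bigcup_n r_Y(C_n)$, with each $r_Y(C_n)$ closed in $X$, transfers the additive description of $\Lambda^{-1}(G)$ to $A$. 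Iterating this through a transfinite induction on $\alpha$ gives the closed-map descent of $\Hf_\alpha$ and $\Bof_\alpha$, and hence $f\in\Hf_\alpha(X)$, respectively $f\in\Bof_\alpha(X)$.

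The genuinely delicate point, and where I expect the main technical effort, is the cozero (Baire) hierarchy $\C_\alpha=\Baf_\alpha$. In a non-metrizable compact space a closed set need not be a zero set, so the closed images $r_Y(C_n)$ above are a priori only members of the Borel, not of the Baire, algebra; the raw argument therefore lands in $\Bof_\alpha(X)$ and must be sharpened to reach $\Baf_\alpha(X)$. I would address this by exploiting that in the real case the super-level sets $\{f>c\}$ of the strongly affine $f$ are convex and are determined through the barycentric formula by $f\r_Y$, so that their descriptive type can be matched against that of $\{f\r_Y>c\}$ inside the cozero algebra, using the equality $\C_1=\Baf_1$ recorded above together with the descriptive machinery of \cite{spurny-ahm} for these algebras on general compact spaces. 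The level $\alpha=1$ of this descent is where the real content sits; in the classical situation of separable $E$ and the Baire hierarchy it specializes to what is obtainable from \cite[Corollaire~8]{sr-kvocient}.
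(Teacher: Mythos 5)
Your overall architecture coincides with the paper's: reduce to real scalars, lift $f$ to $\Lambda(\nu)=\nu(f\r_Y)$ on $\M^1(Y)$ with $Y=\ov{\ext B_{E^*}}$, prove an integration lemma saying that $\nu\mapsto\nu(h)$ preserves each class, use strong affinity together with the fact that maximal representing measures are supported by $Y$ to get $\Lambda=f\circ r_Y$ with $r_Y$ a continuous surjection of compact spaces, and then descend along $r_Y$ (this is the paper's Lemma~\ref{transM} and Theorem~\ref{transf-cl}). The genuine gap is in the descent, exactly the step you call the heart of the matter. Your saturation argument --- $A=f^{-1}(G)$ saturated, $r_Y^{-1}(A)=\Lambda^{-1}(G)=\bigcup_n C_n$, hence $A=\bigcup_n r_Y(C_n)$ --- is sound only when the pieces $C_n$ are closed, i.e.\ for saturated $F_\sigma$ sets. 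At level $\alpha$ the set $\Lambda^{-1}(G)$ is a countable union of sets from $\Pi_\beta(\Bos)$ (resp.\ $\Pi_\beta(\Hs)$, $\Pi_\beta(\Bas)$), and these pieces are \emph{not} saturated: images under a closed map do not commute with countable intersections of non-saturated sets, and already the image of a single member of $\Bos$ (a difference of closed sets) or of $\Hs$ (a resolvable set) need not belong to the corresponding algebra on $X$. So ``iterating through a transfinite induction'' does not go through; this descent is precisely the content of the results the paper cites, namely \cite[Theorems~4 and~10]{HoSp} for $\Bof_\alpha$ and $\Hf_\alpha$ and \cite[Theorem~5.9.13]{ROJA} for $\C_\alpha$, whose proofs use genuinely different machinery (metrizable reduction for the Baire case, scattered families and selections for the Borel and resolvable cases). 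Your proposed repair of the Baire case via convexity of $\{f>c\}$ is not an argument: the cited Baire-class descent holds for arbitrary continuous surjections of compact spaces with no convexity anywhere, and you give no mechanism by which convexity of level sets would yield cozero-algebra descriptions.

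A second, smaller gap: in the integration lemma, the base case for the resolvable hierarchy --- that $\nu\mapsto\nu(H)$ lies in $\Hf_1(\M^1(Y))$ for $H$ resolvable --- is not ``the interplay of integration with the generating algebra'' plus semicontinuity. Resolvable sets are not finite boolean combinations of open sets, and the paper's proof of this point (Lemma~\ref{transferHset}) requires a transfinite induction along a regular representation of $H$ combined with a fragmentation/oscillation argument; the analogous statement for $\Bos$ is easy, but the $\Hs$ case is a substantial part of the work. If you cite the descent theorems instead of attempting to reprove them, and prove the $\Hs$ base case properly, your outline becomes the paper's proof.
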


We remark that the assumption of strong affinity is necessary because otherwise the  transfer of properties fails spectacularly. An example witnessing this phenomenon can be constructed as follows. Consider the real Banach space $E=\C([0,1])$ and the function $f:\M([0,1])\to \er$ assigning to each $\mu\in\M([0,1])$ its continuous part evaluated at function $1$. Then $f$ is a weak* discontinuous element of $E^{**}$ contained in $\C_2(B_{\M([0,1])})$ that vanishes on $\ext B_{\M([0,1])}$. (Details can be found e.g. in \cite[Chapter~14]{PHE}, \cite[p.\,1048]{ArGoRo} or \cite[Proposition~2.63]{book-irt}.)

The next theorem in a way extend results of F. Jellett in \cite[Theorem]{Jel}.

\begin{thm}\label{main2}
Let $E$ be a (real or complex) Banach space such that $\ext B_{E^*}$ is a \lin\ set. Let $f\in E^{**}$ be a strongly affine element satisfying
$f\r_{\ext X}\in \C_\alpha(\ext B_{E^*})$ for some $\alpha\in [0,\omega_1)$. Then 
\[
f\in \begin{cases} \C_{\alpha+1}(B_{E^*}), &\alpha\in [0,\omega_0),\\
                     \C_\alpha(B_{E^*}), &\alpha\in [\omega_0,\omega_1).
      \end{cases}
\] 
\end{thm}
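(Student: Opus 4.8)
The plan is to transfer the Baire-class information from $\ext B_{E^*}$ to all of $B_{E^*}$ by combining the descriptive transfer already available in Theorem~\ref{main1} with the classical barycentric machinery. First I would reduce the problem to the already-proven Theorem~\ref{main1}: the hypothesis $f\r_{\ext B_{E^*}}\in\C_\alpha(\ext B_{E^*})$ must be upgraded to a statement about $f\r_{\ov{\ext B_{E^*}}}$ on the \emph{closure}. This is where the \lin\ assumption on $\ext B_{E^*}$ enters. In a \lin\ space a Baire-$\alpha$ function is $\Sigma_{\alpha+1}$-measurable (resolvably, and in fact Borel/Baire) and the point of the \lin\ property is to guarantee that the relevant preimages, which are $F_\sigma$-type sets in $\ext B_{E^*}$, extend to sets of the correct additive class in the compact closure $\ov{\ext B_{E^*}}$. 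So the first main step is to show that $f\r_{\ov{\ext B_{E^*}}}$ lies in the appropriate $\C_\beta$ (or $\Bof_\beta$) class on $\ov{\ext B_{E^*}}$, picking up at most one level, whence $f\r_{\ov{\ext B_{E^*}}}\in\C_{\alpha+1}(\ov{B_{E^*}})$ for finite $\alpha$ and $\C_\alpha(\ov{B_{E^*}})$ for infinite $\alpha$ (using that finitely many pointwise-limit stages are absorbed once $\alpha\geq\omega_0$).

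Next I would invoke strong affinity together with Theorem~\ref{main1}. Since $f$ is assumed strongly affine, the hypotheses of Theorem~\ref{main1} (third bullet for the Baire case, or the $\Bof$ bullet as an intermediate step) are met, so the membership $f\r_{\ov{\ext B_{E^*}}}\in\C_\beta(\ov{B_{E^*}})$ is \emph{equivalent} to $f\in\C_\beta(B_{E^*})$. Thus, once the closure-upgrade of the previous paragraph is established with $\beta=\alpha+1$ (finite case) or $\beta=\alpha$ (infinite case), the desired conclusion $f\in\C_{\alpha+1}(B_{E^*})$, respectively $f\in\C_\alpha(B_{E^*})$, follows immediately. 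The strong-affinity assumption is exactly what licenses this transfer and is therefore indispensable, in line with the counterexample described after Theorem~\ref{main1}.

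The delicate technical point, and the step I expect to be the main obstacle, is the passage from the function on the \lin\ set $\ext B_{E^*}$ to a function of controlled Baire class on its compact closure $\ov{\ext B_{E^*}}$. One cannot simply extend $f\r_{\ext B_{E^*}}$ arbitrarily, since $f$ is already a fixed affine function on all of $B_{E^*}$; rather, one must verify that the \emph{restriction} $f\r_{\ov{\ext B_{E^*}}}$ inherits the right descriptive class. The \lin\ property allows one to cover $\ext B_{E^*}$ efficiently and to express the relevant $\Sigma_\alpha$-preimages as countable unions of sets whose closures behave well, absorbing the loss into a single additional Baire level. I would handle the finite-$\alpha$ case by an explicit induction, tracking the one-step increase, and then note that for $\alpha\geq\omega_0$ the ordinal arithmetic $\alpha+1=\alpha$ at limit stages (more precisely, that $\omega_0$ extra classes are swallowed) removes the extra level, giving the stated dichotomy. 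The remaining work---checking that pointwise limits interact correctly with restriction to the closure and that the resolvable/Borel versions of Theorem~\ref{main1} can be used as a bridge when the Baire case is awkward---is routine once the covering argument is in place.
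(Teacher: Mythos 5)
Your overall strategy --- upgrade the hypothesis on $\ext B_{E^*}$ to a statement about $f\r_{\ov{\ext B_{E^*}}}$ and then invoke Theorem~\ref{main1} --- is not the paper's route, and, more importantly, it hides the entire difficulty of the theorem inside the step you describe as a covering argument. The gap is this: you propose to show that $f\r_{\ov{\ext B_{E^*}}}$ has class $\alpha+1$ by a topological argument (``cover $\ext B_{E^*}$ efficiently and express the relevant $\Sigma_\alpha$-preimages as countable unions of sets whose closures behave well''). No such argument can work, because the values of $f$ on $\ov{\ext B_{E^*}}\setminus\ext B_{E^*}$ are not constrained by the topology of $\ext B_{E^*}$ or by the preimage structure of $f\r_{\ext B_{E^*}}$ at all; they are constrained only through the barycentric formula, i.e.\ through $f(x)=\mu(f)$ for maximal measures $\mu\in\M_x(B_{E^*})$, which are merely pseudo-supported by $\ext B_{E^*}$. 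A bounded affine function that is not strongly affine can vanish on $\ext B_{E^*}$ and still fail to be Baire on the closure, so strong affinity must enter precisely in this step, not only at the final appeal to Theorem~\ref{main1}. Once the barycentric formula is brought in, controlling the Baire class of $x\mapsto\mu_x(f)$ as $x$ ranges over $\ov{\ext B_{E^*}}$ requires a selection $x\mapsto\mu_x$ of maximal measures with measurability of controlled class --- and that is exactly the hard content of the paper's proof, in no way ``routine''. Note also that establishing the class of $f$ on $\ov{\ext B_{E^*}}$ is not easier than establishing it on $B_{E^*}$ itself, so routing the argument through Theorem~\ref{main1} buys nothing.

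For comparison, the paper never passes through the closure. After reducing to real scalars (Lemma~\ref{ce-er}), Theorem~\ref{main2} is exactly Theorem~\ref{lind2}: for $\alpha=0$ one uses the \lin\ property via a Jellett-type countable approximation by continuous affine functions (Lemma~\ref{lin-appr}), upper and lower envelopes and Hahn--Banach, together with the fact that maximal measures give full mass to measurable supersets of $\ext X$ (Lemma~\ref{lind0.020}); for $\alpha\geq 1$ one first shows that $f$ is a Baire function (Proposition~\ref{lind0.03}), then performs a metrizable reduction (Lemma~\ref{lin-reduk}) and applies Talagrand's theorem on measurable selection of maximal measures on the metrizable quotient, obtaining the class of $f$ on all of $X$ directly. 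The \lin\ hypothesis is used for the countable approximation and for Lemma~\ref{lind0.020}, not, as you suggest, for extending preimages from $\ext B_{E^*}$ to its closure. Finally, your remark that ``$\alpha+1=\alpha$'' for $\alpha\geq\omega_0$ is not literally correct; what is true (and what the paper uses) is that a function of limit class $\alpha$ is a pointwise limit of functions of classes $\alpha_n<\alpha$, and the one-level loss $\alpha_n\mapsto\alpha_n+1$ is absorbed below $\alpha$.
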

By assuming a stronger assumption on $\ext B_{E^*}$ we may ensure the preservation of all classes, including the finite ones.

\begin{thm}\label{main3}
Let $E$ be a (real or complex) Banach space such that $\ext B_{E^*}$ is a resolvable \lin\ set. Let $f\in E^{**}$ be a strongly affine element satisfying $f\r_{\ext X}\in \C_\alpha(\ext B_{E^*})$ for some $\alpha\in [1,\omega_1)$. Then $f\in\C_\alpha(B_{E^*})$.
\end{thm}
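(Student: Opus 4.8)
The plan is to deduce Theorem~\ref{main3} from Theorem~\ref{main1} by verifying the left-hand condition of its $\C$-part on the closure $\ov{\ext B_{E^*}}$. Since $f$ is strongly affine, that equivalence reduces the whole statement to proving $f\r_{\ov{\ext B_{E^*}}}\in\C_\alpha(\ov{\ext B_{E^*}})$; thus the entire content of the theorem lies in upgrading the information $f\r_{\ext B_{E^*}}\in\C_\alpha(\ext B_{E^*})$ from the extreme points to their weak* closure without increasing the class. This is exactly where resolvability must do work that mere Lindel\"ofness cannot: taking $[0,1]$, letting $A$ be the complement of a Cantor set, and letting $f$ vanish on $A$ while restricting to a non-Baire-one function on the Cantor set shows that for a general bounded function the restriction to a dense resolvable \lin\ subset carries no information about the class on the whole space. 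Hence strong affinity is indispensable for controlling $f$ on $\ov{\ext B_{E^*}}\setminus\ext B_{E^*}$.

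First I would isolate the bridging lemma: if $f$ is strongly affine and $\ext B_{E^*}$ is a resolvable \lin\ set, then $f\r_{\ext B_{E^*}}\in\C_\alpha(\ext B_{E^*})$ implies $f\r_{\ov{\ext B_{E^*}}}\in\C_\alpha(\ov{\ext B_{E^*}})$ for every $\alpha\in[1,\omega_1)$. To prove it I would exploit the resolution of the resolvable set $\ext B_{E^*}$ inside $L:=\ov{\ext B_{E^*}}$: there is an increasing transfinite family of relatively closed sets $\emptyset=F_0\subset F_1\subset\cdots$ exhausting $L$, with each $F_{\xi+1}\setminus F_\xi$ relatively open in $L\setminus F_\xi$ and $\ext B_{E^*}$ meeting each such layer either in all of it or not at all (the standard structure of an $H$-set, see \cite[\S\,12, VI]{kur}). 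The \lin\ hypothesis is then used to extract from this tower a countable system of pieces covering $\ext B_{E^*}$, so that only the countable pointwise-limit operations underlying the hierarchy $\C_\alpha$ are required.

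The decisive step, and the main obstacle, is to describe $f$ on the layers that miss $\ext B_{E^*}$, where a priori only boundedness of $f$ is known. Here strong affinity enters through the barycentric formula: each such point is the barycenter $r(\mu)$ of some $\mu\in\M^1(B_{E^*})$ carried by $\ext B_{E^*}$, and $\mu(f)=f(r(\mu))$ expresses the value of $f$ there as an average of its class-$\alpha$ values on the extreme points. I expect the technical heart to be showing that this representation, combined with the relatively clopen nature of the trace of $\ext B_{E^*}$ on each layer, lets one write $f\r_L$ as a countable combination of class-$\alpha$ pieces that glue along the closed spine $\bigcup_\xi F_\xi$ \emph{without} passing to a genuinely new pointwise limit. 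This is precisely why resolvability prevents the loss of one class occurring in Theorem~\ref{main2}: the layers are relatively clopen in the trace, so the gluing is an operation internal to $\C_\alpha$ rather than a limit costing a step in the hierarchy, and in particular the finite classes $\alpha\geq 1$ are preserved. Once the bridging lemma is in hand, Theorem~\ref{main1} delivers $f\in\C_\alpha(B_{E^*})$ at once.
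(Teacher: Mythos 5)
Your overall frame---prove the ``bridging lemma'' that $f\r_{\ov{\ext B_{E^*}}}\in\C_\alpha(\ov{\ext B_{E^*}})$ and then invoke Theorem~\ref{main1}---is legitimate, but it relocates the problem rather than solving it: given Theorem~\ref{main1}, the bridging lemma is \emph{equivalent} to the theorem, so all of the content lies there, and at precisely the decisive point your text offers an expectation (``I expect the technical heart to be showing that\dots'') instead of an argument. The missing argument is exactly the hard one: knowing that each $x\notin\ext B_{E^*}$ satisfies $f(x)=\mu(f)$ for some (maximal) $\mu$ pseudo-carried by $\ext B_{E^*}$ gives no regularity of $f$ in $x$ unless the representing measures are chosen \emph{measurably} in $x$ and the map $\mu\mapsto\mu(f)$ is shown to have controlled class on the space of measures. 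This is what the paper's proof (Theorem~\ref{porce}, working directly on $X=B_{E^*}$ rather than on $\ov{\ext B_{E^*}}$) spends all its effort on: Lemma~\ref{g=f} produces a Baire set $B\supset\ext X$ with $f\in\C_\alpha(B)$; the separation Lemma~\ref{separ} (this is where resolvability is actually used) yields a closed set $F=\bigcup_n F_n$ with $\ext X\subset F\subset B$; the sets $M_n=\{\mu:\mu(F_n)\geq 1-\ep\}$ and $X_n=r(M_n)$ together with the Baire category theorem give, inside every compact $K\subset X$, a relatively open piece on which $f$ is $\ep$-approximated by $g=\wt h\circ\phi$, where $h\in\C_\alpha(X)$ extends $f\r_{F_m}$, $\wt h(\mu)=\mu(h)$ is handled by Lemma~\ref{transM}, and $\phi$ is a measurable selection of representing measures from Lemma~\ref{selec} (i.e.\ \cite[Lemma~8]{HoSp}); strong affinity enters only to estimate $|g-f|=|\wt h\circ\phi-\wt f\circ\phi|<\ep$ (Lemma~\ref{partU}). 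Note also that this is a uniform $\ep$-approximation scheme iterated by transfinite exhaustion, not an exact layer-by-layer description of $f$ on a resolution of $\ext X$ in its closure. None of this machinery, nor any substitute for it, appears in your proposal.

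Moreover, the one mechanism you do commit to is false as stated: gluing class-$\alpha$ pieces along relatively clopen layers of a scattered decomposition is \emph{not} ``an operation internal to $\C_\alpha$''. What such gluing preserves is resolvable-class measurability, i.e.\ $\Sigma_{\alpha+1}(\Hs)$-measurability (this is Proposition~\ref{propH}), and that class genuinely leaves the Baire hierarchy: on $[0,\omega_1]$ the partition $\{[0,\omega_1),\{\omega_1\}\}$ is scattered, both pieces carry constant (hence continuous) functions, yet the glued function $\chi_{\{\omega_1\}}$ is not a Baire function of any class, since $\{\omega_1\}$ is not a Baire set. So even if you completed your layer argument, you would only have shown that $f$ (or $f\r_{\ov{\ext B_{E^*}}}$) is $\Sigma_{\alpha+1}(\Hs)$-measurable. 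To get back into $\C_\alpha$ the paper needs two further ingredients, both absent from your sketch: the fact that $f$ is a Baire function on $X$ (Proposition~\ref{lind0.03}, where Lindel\"ofness and strong affinity are used again), and the nontrivial conversion result \cite[Theorem~5.2 and Corollary~5.5]{spurny-ahm}, which says that a Baire function that is $\Sigma_{\alpha+1}(\Hs)$-measurable lies in $\C_\alpha$. (A smaller inaccuracy: the paper runs the fragmentation argument only for $\alpha<\omega_0$; for infinite $\alpha$ the conclusion already follows from the Lindel\"of case, Theorem~\ref{lind2}, with no shift of class.)
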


We remark that the shift of classes may really occur without the assumption of resolvability as it is witnessed by Example~\ref{noFsigma}.
One may also ask whether results analogous to the ones of Theorems~\ref{main2} and~\ref{main3} remains true for functions from classes $\Bof_\alpha$ and $\Hf_\alpha$. Examples~\ref{ex1} and~\ref{ex2} show that this is not the case.

Further we observe that, for a separable space $E$, the topological condition imposed on $\ext B_{E^*}$ in Theorem~\ref{main3} is equivalent with the requirement that $\ext B_{E^*}$ is a set of type $F_\sigma$. This can be seen from the following two facts:  a subset of a compact metrizable space is a resolvable set if and only if it is both of type $F_\sigma$  and $G_\delta$ (use \cite[\S\,26, X]{kur} and the Baire category theorem); the set of extreme points in a metrizable compact convex set is of type $G_\delta$ (see \cite[Corollary I.4.4]{alfsen} or \cite[Proposition~3.43]{book-irt}).

We also point out that the topological assumption in Theorem~\ref{main3} is satisfied provided $\ext B_{E^*}$ is an $F_\sigma$ set. To see this, we first  notice that $\ext B_{E^*}$ is then a Lindel\"of space. Second, we need to check that $\ext B_{E^*}$ is a resolvable set in $B_{E^*}$. To this end, assume that $F\subset B_{E^*}$ is a nonempty closed set such that both $F\cap \ext B_{E^*}$ and $F\setminus \ext B_{E^*}$ are dense in $F$. By \cite[Th\'eor\`eme 2]{talagr2}, we can write
\[
\ext B_{E^*}=\bigcap_{n=1}^\infty (H_n\cup V_n),
\]
where $H_n\subset B_{E^*}$ is closed and $V_n\subset B_{E^*}$ is open, $n\in\en$.
Thus both $F\setminus \ext B_{E^*}$ and $F\cap \ext B_{E^*}$ are comeager disjoint sets in $F$, a contradiction with the Baire category theorem.
Hence $\ext B_{E^*}$ is a resolvable set.
 
For the particular class of Banach spaces, namely $L_1$-preduals, one can obtain an information on an affine class of a function from its descriptive class (we recall that a Banach space is an \emph{$L_1$-predual} if $E^*$ is isometric to some space $L_1(\mu)$; see \cite[p.\,59]{JO-LI}, \cite[Chapter~7]{lacey} or \cite[Section~II.5]{Werbook}). Affine classes $\fra_\alpha(X)$, $\alpha<\omega_1$, of functions on a compact convex set $X$  are created inductively from $\fra_0(X)=\fra^c(X)$ (see \cite{capon} or \cite[Definition~5.37]{book-irt}). We also remark that a pointwise convergent sequence of affine functions on $X$ is uniformly bounded which easily follows from the uniform boundedness principle (see e.g. \cite[Lemma~5.36]{book-irt}), and thus any function in $\bigcup_{\alpha<\omega_1} \fra_\alpha(X)$ is strongly affine. If $X=B_{E^*}$ is the dual unit ball of a Banach space $E$, the affine classes are termed \emph{intrinsic Baire classes} of $E$ in \cite[p.\,1047]{ArGoRo} whereas strongly affine Baire functions on $X$ creates hierarchy of  \emph{Baire classes} of $E$.
Theorem~\ref{main4} relates these classes for real $L_1$-preduals.

We recall that, given a compact convex set $X$ in a real locally convex space, the real Banach space $\fra^c(X)$ is an $L_1$-predual if and only if $X$ is a \emph{simplex}, i.e., if for any $x\in X$ there exists a unique maximal measure $\delta_x\in \M^1(X)$ representing $x$ (see \cite[Theorem~3.2 and Proposition~3.23]{FoLiPh}).

(A measure $\mu\in\M^+(X)$ is \emph{maximal} if $\mu$ is maximal with respect to the Choquet ordering, i.e., $\mu$ fulfils the following condition: if a measure $\nu\in\M^+(X)$ satisfies $\mu(k)\leq \nu(k)$ for any convex continuous function $k$ on $X$, then $\mu=\nu$. We refer the reader to \cite[Chapter~I\,,\S\,3]{alfsen} or \cite[Section~3.6]{book-irt} for information on maximal measures.)

\begin{thm}\label{main4}
Let $E$ be a real $L_1$-predual and $f\in E^{**}$ be a strongly affine function such that $f\in\C_\alpha(B_{E^*})$ for some $\alpha\in [2,\omega_1)$. Then
\[
f\in\begin{cases} \fra_{\alpha+1}(B_{E^*}),& \alpha\in [2,\omega_0),\\
                  \fra_\alpha(B_{E^*}),&\alpha\in [\omega_0,\omega_1).
      \end{cases}                          
\]
If, moreover, $\ext B_{E^*}$ is a \lin\ resolvable set, then $f\in \fra_{\alpha}(B_{E^*})$.
\end{thm}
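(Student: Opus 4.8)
The plan is to convert descriptive information about $f$ into structural (affine) information by means of the barycentric calculus that the $L_1$-predual hypothesis makes available. The first step is a reduction to the simplex setting: since $E$ is a real $L_1$-predual, $E^*$ is an $L$-space, and the maximal-representing-measure apparatus on $B_{E^*}$ can be made to behave as it does on a simplex, the relevant feature being a \emph{canonical} representing measure $m_{x^*}$ for each $x^*\in B_{E^*}$, carried by $\ov{\ext B_{E^*}}$ and depending affinely on $x^*$. Granting this, I would introduce the affinization operator $T(g)(x^*)=\int g\,dm_{x^*}$, which sends bounded functions to affine functions; the decisive point is that, because $f$ is strongly affine, the barycentric formula gives $T(f)=f$. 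Thus $f$ is recovered as an affine object, and the whole task becomes one of controlling the affine \emph{class} of $f$ through that of suitable approximants.

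For the class estimate I would argue by transfinite induction on $\alpha$. Write $f=\lim_n g_n$ with $g_n\in\C_{\beta_n}(B_{E^*})$, $\beta_n<\alpha$, and set $a_n=T(g_n)$. Each $a_n$ is affine, and by bounded convergence against the measures $m_{x^*}$ one has $a_n\to T(f)=f$ pointwise. The inductive hypothesis is then applied to the $a_n$: one shows that $T$ raises the Baire class only by a controlled amount (integration against a fixed family of measures preserves Baire class up to the descriptive complexity of the map $x^*\mapsto m_{x^*}$), and that a bounded \emph{strongly affine} function of Baire class $\beta$ on this simplex lies in $\fra_{\beta+1}$. Assembling $f=\lim_n a_n$ then places $f$ one level above the $a_n$.

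The finite/infinite dichotomy should fall out of the bookkeeping. For finite $\alpha=n$ one may take all $\beta_n=n-1$, the affinized approximants land in $\fra_n$, and the closing limit yields $\fra_{n+1}$, with no way to cancel the extra level. For a limit ordinal $\alpha$ one instead chooses $\beta_n\nearrow\alpha$, arranges the affine approximants to lie in $\bigcup_{\gamma<\alpha}\fra_\gamma$, and concludes $f\in\fra_\alpha$ directly; infinite successor ordinals require the extra observation that repeated affinization of bounded strongly affine intermediate functions does not accumulate cost, so that the increment is absorbed. For the final assertion, when $\ext B_{E^*}$ is a \lin\ resolvable set the canonical measures are carried by a resolvable set and $f\r_{\ov{\ext B_{E^*}}}$ is governed by Theorem~\ref{main3}; this permits building affine approximants of Baire class exactly $\alpha$ rather than $\alpha+1$, which removes the shift and gives $f\in\fra_\alpha$.

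I expect the main obstacle to be precisely this class bookkeeping for $T$ in the non-metrizable setting. Verifying that $x^*\mapsto m_{x^*}$ has low enough descriptive complexity, and that $T$ inflates the Baire class by at most the single permitted level, is where the \lin\ and resolvability hypotheses and Talagrand-type descriptions of $\ext B_{E^*}$ must be brought to bear; the genuine reduction of a real $L_1$-predual to a usable simplex calculus (the ball $B_{E^*}$ is not itself a simplex in general) is a nontrivial preliminary. Pinning down the exact ordinal arithmetic, together with the base case that forces the restriction $\alpha\geq 2$, is the delicate analytic core of the argument.
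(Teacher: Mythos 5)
Your plan founders at its first and decisive step: the ``canonical representing measure $m_{x^*}$ \dots depending affinely on $x^*$'' does not exist on $B_{E^*}$ for a general real $L_1$-predual, because $B_{E^*}$ is not a simplex. Strong affinity of $f$ does give $m_{x^*}(f)=f(x^*)$ for \emph{any} choice of representing measures, but for your approximants $a_n=T(g_n)$ to be affine you need the selection $x^*\mapsto m_{x^*}$ itself to be affine, and this is impossible already for $E=\C(\{1,2\})$: there $B_{E^*}$ is the $\ell_1$-ball of $\er^2$ (a square) with extreme points $\pm e_1,\pm e_2$; since an extreme point admits only the Dirac measure as a representing measure, affinity of the selection would force
\[
\tfrac12\bigl(\delta_{e_1}+\delta_{-e_1}\bigr)=m_0=\tfrac12\bigl(\delta_{e_2}+\delta_{-e_2}\bigr),
\]
which is false. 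This is exactly the failure of uniqueness of maximal measures on non-simplices, and no Talagrand-type measurable selection can repair it: the obstruction is to affinity, not to measurability. Consequently your operator $T$ does not send Baire functions to affine functions, and the whole inductive scheme that approximates $f$ pointwise by affine functions $a_n$ collapses at the base.

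What is missing is precisely the structural theorem the paper relies on (Lusky): every real $L_1$-predual $E$ embeds isometrically as $j(E)$ into a simplex space $\fra^c(X)$, $X$ a simplex, with a norm-one projection $P:\fra^c(X)\to j(E)$ and an affine continuous surjection $\varphi:X\to B_{E^*}$ identifying $\ext X$ with $\ext B_{E^*}\cup\{0\}$ up to a singleton. The simplex results you want to invoke (Capon for finite $\alpha$, Ka\v cena--Spurn\'y for infinite $\alpha$, and the Kalenda--Spurn\'y affine Baire-one selection $x\mapsto\delta_x(g)$ for the resolvable \lin\ case) are then applied to the strongly affine function $f\circ\varphi\in\C_\alpha(X)$ \emph{on the simplex $X$}, and the conclusion is transported back to $B_{E^*}$ through the projection: one checks that $P$, realized by signed measures $\mu_x$ with $Pg(x)=\mu_x(g)$, satisfies $P(f\circ\varphi)=f\circ\varphi$ and maps $\fra_\beta(X)$ into $(j(E))_\beta$. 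Note also that your treatment of the final assertion via Theorem~\ref{main3} is off target: that theorem controls the Baire classes $\C_\alpha$, not the intrinsic classes $\fra_\alpha$, so it cannot remove the affine-class shift; in the paper the shift is removed by the affine Baire-one selection theorem on $X$, after verifying that $\ext X$ inherits the \lin\ resolvable property from $\ext B_{E^*}$. Your closing remark correctly flags the reduction to a simplex calculus as the nontrivial preliminary, but without Lusky's complementation theorem (or an equivalent substitute) that reduction cannot be carried out on $B_{E^*}$ itself, so the proposal as written has an unfillable gap.
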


Let us point out that, for any Banach space $E$ and a strongly affine function $f\in E^{**}$ satisfying $f\in \C_1(B_{E^*})$, we have $f\in\fra_1(B_{E^*})$. This follows from \cite[Th\'eor\`eme~80]{Rog} (see also \cite[Theorem~II.1.2]{ArGoRo} or \cite[Theorem~4.24]{book-irt}). For higher Baire classes, there is a big gap between  affine and Baire classes which is an assertion substantiated by M. Talagrand's example \cite[Theorem]{talagr6} where he constructed a separable Banach space $E$ and a strongly affine function $f\in E^{**}$ that is in $\C_2(B_{E^*})$ and not contained in $\bigcup_{\alpha<\omega_1} \fra_\alpha(B_{E^*})$.
Further, \cite[Theorem~1.1]{spurny-trans} shows that the shift of classes in Theorem~\ref{main4} for finite ordinals may occur even for separable $L_1$-preduals.

The strategy of the proofs of our main results is to reduce firstly the problem to the case of real Banach spaces and then to consider the dual unit ball with the weak* topology as a compact convex subset of a real locally convex space. Elements of the bidual are then bounded affine functions on the dual unit ball. The key results of Sections~\ref{s:cl-boundary}--\ref{s:transf-resol} are thus formulated for this setting. The proof of Theorem~\ref{main4} is moreover based upon a result of W.~Lusky stating that any real $L_1$-predual is complemented in a simplex space (i.e., a space of type $\fra^c(X)$ for a simplex $X$) and thus our above mentioned technique can be used only for real $L_1$-preduals.  Since it is not clear whether Lusky's result remains true for complex $L_1$-preduals, \emph{the validity of Theorem~\ref{main4} for complex spaces remains open}.

The content of our paper is the following. The second section provides a more detailed information on descriptive classes of sets and functions. Then we prepare a proof of Theorem~\ref{main1} in Section~\ref{s:cl-boundary}. Results necessary for dealing with Lindel\"of sets of extreme points are collected in Section~\ref{s:lind}. They are used in Sections~\ref{s:transf-lind} and~\ref{s:transf-resol}, which prepares ground for the proof of Theorems~\ref{main2} and~\ref{main3}. All Sections~\ref{s:cl-boundary}--\ref{s:transf-resol} deal within the context of real spaces. Section~\ref{s:main} proves by means of prepared results the theorems stated in the introduction. The last Section~\ref{examples} constructs spaces witnessing some natural bounds of our positive results.

When citing references, we try to include several sources to help the reader with finding relevant results. 

\section{Descriptive classes of sets and functions}
\label{s:des}

We recall that, for a Tychonoff space $X$, $\Bas(X)$, $\Bos(X)$ and $\Hs(X)$ denote the algebras generated by cozero sets, open sets and resolvable sets in $X$, respectively. These algebras serve as a starting point of an inductive definition of descriptive classes of sets as was indicated in introduction. More precisely, if $\F$ is any of the families above, $\Sigma_2(\F)$ consists of all countable unions of sets from $\F$ and $\Pi_2(\F)$ of all countable intersections of sets from $\F$. Proceeding inductively, for any $\alpha\in (2,\omega_1)$ we let $\Sigma_\alpha(\F)$ to be made of all countable unions of sets from $\bigcup_{1\leq \beta<\alpha}\Pi_\beta(\F)$ and $\Pi_\alpha(\F)$ is made of all countable intersections of sets from $\bigcup_{1\leq \beta<\alpha}\Sigma_\beta(\F)$. The family  $\Pi_\alpha(\F)\cap \Sigma_\alpha(\F)$ is denoted as $\Delta_\alpha(\F)$. The union of all created additive (or multiplicative) classes is then the $\sigma$-algebra generated by $\F$. 

(These classes and their analogues were studied by several authors, see e.g. \cite{hansell}, \cite{raja}, \cite{ho-pe} or \cite{ho-spa}. We describe in \cite[Remark~3.5]{spurny-ahm} their relations to our descriptive classes. We refer the reader to \cite{ho-spa} for a recent survey on descriptive set theory in nonseparable and nonmetrizable spaces.)

In case $X$ is metrizable, all the resulting classes coincide (see \cite[Proposition~3.4]{spurny-ahm}). These classes characterize in terms of measurability the classes $\Baf_\alpha(X)$, $\Bof_\alpha(X)$ and $\Hf_\alpha(X)$ defined in the introduction. (We recall that a mapping $f:X\to \ce$ is called \emph{$\F$-measurable} if $f^{-1}(U)\in \F$ for every $U\subset \ce$ open.)  Precisely, it is proved in \cite[Theorem~5.2]{spurny-ahm} that \emph{given a function $f:X\to\ce$ on a Tychonoff space $X$ and $\alpha\in [1,\omega_1)$, we have
\begin{itemize}
\item  $f\in\Baf_\alpha(X)$ if and only if $f$ is $\Sigma_{\alpha+1}(\Bas(X))$--measurable.
\item  $f\in\Bof_\alpha(X)$ if and only if $f$ is $\Sigma_{\alpha+1}(\Bos(X))$--measurable.
\item  $f\in\Hf_\alpha(X)$ if and only if $f$ is $\Sigma_{\alpha+1}(\Hs(X))$--measurable.
\end{itemize}
}
It follows easily from this characterization that all the classes $\Baf_\alpha(X)$, $\Bof_\alpha(X)$ and $\Hf_\alpha(X)$ are stable with respect to algebraic operations and uniform convergence (see \cite[Theorem~5.10]{book-irt}).
Also, a function $f$ is measurable with respect to the $\sigma$-algebra generated by $\Hs$ if and only if $f$ belongs to some class $\Hf_\alpha$. Analogous assertions hold true for the algebras $\Bos$ and $\Bas$. Thus $\bigcup_{\alpha<\omega_1} \C_\alpha(X)=\bigcup_{\alpha<\omega_1}\Baf_\alpha(X)$ is the family of all functions measurable with respect to the $\sigma$-algebra of Baire sets.

The following characterization  of functions from $\Hf_1$ follows from the definition and results of G.~Koumoullis in \cite[Theorem~2.3]{koum}.

\begin{prop}\label{propHf}
For a function $f:K\to\ce$ on a compact space $K$, the following assertions are equivalent:
\begin{enumerate}
\item [(i)] $f\in\Hf_1(K)$,
\item [(ii)] $f\r_F$ has a point of continuity  for every closed $F\subset K$ (i.e., $f$ has the \emph{point of continuity property}),  
\item [(iii)] for each $\ep>0$ and nonempty $F\subset X$ there exists a relatively open nonempty set $U\subset F$ such that $\diam f(U)<\ep$ ($f$ is \emph{fragmented}).
\end{enumerate}
\end{prop}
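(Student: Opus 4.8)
The plan is to handle the two ``topological'' equivalences (ii)\,$\iff$\,(iii) by an elementary oscillation argument, and to split the ``descriptive'' equivalence (i)\,$\iff$\,(iii) into an easy implication done by hand and a hard one reduced to the cited work of Koumoullis. Throughout, for a closed $F\subset K$ and $x\in F$ I write $\osc_F f(x)=\inf\{\diam f(W\cap F): W \text{ open}, x\in W\}$; the set $\{x\in F:\osc_F f(x)<\ep\}$ is relatively open in $F$, and the continuity points of $f\r_F$ form the $G_\delta$ set $\{x\in F:\osc_F f(x)=0\}$. The feature that makes Baire-category arguments available is that every closed $F\subset K$ is compact, hence a Baire space.

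For (iii)\,$\Rightarrow$\,(ii), fix a closed $F$ and $\ep>0$. Given a nonempty relatively open $W\subset F$, fragmentation applied to $W$ produces a nonempty $V\subset W$, relatively open in $W$ and hence in $F$, with $\diam f(V)<\ep$; every point of $V$ then has oscillation $<\ep$, so $\{x\in F:\osc_F f(x)<\ep\}$ is dense open in $F$. By the Baire category theorem $\bigcap_n\{x\in F:\osc_F f(x)<1/n\}$ is a dense $G_\delta$, so $f\r_F$ has (many) points of continuity. For (ii)\,$\Rightarrow$\,(iii), given a nonempty $A\subset K$ and $\ep>0$, I would put $F=\ov A$ and pick a continuity point $x_0$ of $f\r_F$; choosing an open $W\ni x_0$ with $\diam f(W\cap F)<\ep$, the set $W\cap A$ is nonempty, relatively open in $A$, and has image of diameter $<\ep$.

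For (i)\,$\Rightarrow$\,(iii) I would argue directly. Cover $\ce$ by countably many open sets $O_n$ of diameter $<\ep$; then $K=\bigcup_n f^{-1}(O_n)$, and by (i) each $f^{-1}(O_n)$ is a countable union of resolvable sets, so $K=\bigcup_k H_k$ with every $H_k$ resolvable and $\diam f(H_k)<\ep$. Restricting to $F=\ov A$ gives $F=\bigcup_k(H_k\cap F)$, where each $H_k\cap F$ is resolvable in $F$. Here I would use the elementary fact that a resolvable set with empty relative interior in $F$ is nowhere dense in $F$ (apply the defining property of resolvability to the closure in $F$ of an arbitrary relatively open set). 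Since $F$ is Baire, some $H_k\cap F$ must have nonempty interior in $F$, yielding a relatively open $V\subset H_k\cap F$ with $\diam f(V)<\ep$; as before $V\cap A$ is the desired piece, so $f$ is fragmented.

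The main obstacle is (iii)\,$\Rightarrow$\,(i), and here I would invoke \cite[Theorem~2.3]{koum}. The naive attempts fail because of a boundary phenomenon: writing an open $U\subset\ce$ as a countable union of closed sets reduces the problem to the resolvability of $f^{-1}(\text{closed})$, but the preimage of a closed set under a merely fragmented map need not be resolvable, its boundary being too large; dually one cannot expect $f^{-1}(U)$ itself to be resolvable, only to lie in $\Sigma_2(\Hs(K))$. What is really needed is, for each $\ep$, a transfinite closed exhaustion of $K$ with $\ep$-small increments that can nonetheless be reassembled into a \emph{countable} union of resolvable sets of small image, and this is exactly the content of Koumoullis's characterization of fragmentability. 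Combining it with the definition of $\Hf_1(K)$ as the class of $\Sigma_2(\Hs(K))$-measurable functions would give $f\in\Hf_1(K)$ and close the chain of equivalences.
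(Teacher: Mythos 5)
Your proposal is correct, and it is in fact more self-contained than the paper's treatment: the paper gives no proof at all, merely prefacing the proposition with the remark that it follows from the definitions and the results of G.~Koumoullis in \cite[Theorem~2.3]{koum}, thereby delegating all of the equivalences to that single citation. You invoke the same theorem, but only for the one implication that genuinely needs it, namely (iii)$\Rightarrow$(i), and prove the rest by hand. Your hand-made arguments are sound: (iii)$\Rightarrow$(ii) is the oscillation/Baire-category argument, legitimate here because closed subsets of $K$ are compact and hence Baire; in (ii)$\Rightarrow$(iii) the continuity point of $f\r_{\ov{A}}$ need not lie in $A$, but, exactly as you use, density of $A$ in $\ov{A}$ makes $W\cap A$ nonempty and of small image; and in (i)$\Rightarrow$(iii), traces of resolvable sets on $F=\ov{A}$ are resolvable in $F$, your auxiliary claim that a resolvable set with empty relative interior is nowhere dense follows directly from the definition applied to relatively open subsets, and the Baire property of $F$ then produces a piece with nonempty interior and image of diameter less than $\ep$. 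What your route buys is a precise localization of the dependence on Koumoullis: only ``fragmented implies $\Sigma_2(\Hs(K))$-measurable'' requires his machinery, and your observation that preimages of closed sets under merely fragmented maps need not be resolvable correctly explains why that implication cannot be closed by naive means. What the paper's wholesale citation buys is brevity, since Koumoullis's theorem already contains all the equivalences at once.
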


Next we need to recall a characterization of resolvable sets that asserts that \emph{a subset $H$ of a topological space $X$ is resolvable if and only if there exist an ordinal $\kappa$ and an increasing sequence of open sets $\emptyset = U_0 \subset U_1 \subset U_2 \subset \cdots \subset U_\gamma \subset \cdots \subset U_\kappa = X$ and $I \subset [0,\kappa)$ such that, for a limit ordinal $\gamma \in [0,\kappa]$, we have $\bigcup \{ U_{\lambda}: \lambda < \gamma \} = U_\gamma$ and  $H = \bigcup \{ U_{\gamma+1} \setminus U_{\gamma} : \gamma \in I \}$} (see \cite[Section~2]{HoSp} and references therein). 
We call such a transfinite sequence of open sets \emph{regular} and such a description of a resolvable set a \emph{regular representation} (this notion of regular representation is slightly more useful for us than the one used in  \cite[Section~2]{HoSp}).

A family $\mathcal U$ of subsets of a topological space $X$ is \emph{scattered}
if it is disjoint and for each nonempty $\mathcal V\subset\mathcal U$ there is
some $V\in \mathcal V$ relatively open in $\bigcup\mathcal V$. If $(U_\gamma)_{\gamma\leq \kappa}$ is a regular sequence, then $\{U_{\gamma+1}\setminus U_{\gamma}: \gamma<\kappa\}$ is a scattered partition of $X$.

It is not difficult to deduce that a scattered union of resolvable sets is again a resolvable set. 
(Indeed, let $\{H_i: i\in I\}$ be a scattered family of resolvable sets. By \cite[Fact 4]{ho-pe}, each $H_i$ is a union of a scattered family $\H_{i}$ of sets in $\Bos(X)$. By \cite[Lemma~2.2(c)]{hansell}, the family $\bigcup_{i\in I} \H_{i}$ is scattered, and thus again by \cite[Fact~4]{ho-pe}, the set $\bigcup_{i\in I} H_i$ is resolvable.)

We will also need a fact that any resolvable subset of a compact space is universally measurable (see \cite[Lemma~4.4]{koum}).
 
The following fact will be used in the proof of Theorem~\ref{porce}.

\begin{prop}\label{propH}
Let $\alpha\in [2,\omega_1)$ and $(U_\gamma)_{\gamma\leq \kappa}$ be a regular sequence in a Tychonoff space $X$. Let $A\subset X$ be such that $A\cap (U_{\gamma+1}\setminus U_\gamma)\in \Sigma_\alpha(\Hs(U_{\gamma+1}\setminus U_\gamma))$ for each $\gamma<\kappa$ (or $A\cap(U_{\gamma+1}\setminus U_\gamma)\in \Pi_\alpha(\Hs(U_{\gamma+1}\setminus U_\gamma))$, $\gamma<\kappa$). Then $A\in\Sigma_\alpha(\Hs(X))$ (or $A\in \Pi_\alpha(\Hs(X))$).
\end{prop}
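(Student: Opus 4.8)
The plan is to prove both assertions (for $\Sigma_\alpha$ and for $\Pi_\alpha$) simultaneously by transfinite induction on $\alpha$, working throughout with the fixed scattered partition $\{Y_\gamma := U_{\gamma+1}\setminus U_\gamma : \gamma<\kappa\}$ of $X$ supplied by the regular sequence. I would use three elementary facts about this partition: each $Y_\gamma=U_{\gamma+1}\cap(X\setminus U_\gamma)\in\Bos(X)\subset\Hs(X)$; the family $\{Y_\gamma\}$ is scattered and hence so is every subfamily (scatteredness quantifies over all nonempty subfamilies, so it passes to subfamilies verbatim); and, because the pieces are pairwise disjoint with each relevant set contained in a single piece, the union over $\gamma$ commutes with countable intersections and unions, i.e. if $B_\gamma^{(n)}\subset Y_\gamma$ then $\bigcap_n\bigcup_\gamma B_\gamma^{(n)}=\bigcup_\gamma\bigcap_n B_\gamma^{(n)}$ and $\bigcup_n\bigcup_\gamma B_\gamma^{(n)}=\bigcup_\gamma\bigcup_n B_\gamma^{(n)}$.

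For the base level I would first record that a set resolvable in a single piece is resolvable in $X$: if $B\subset Y_\gamma$ lies in $\Hs(Y_\gamma)$, then by \cite[Fact~4]{ho-pe} it is the union of a scattered family of $\Bos(Y_\gamma)$-sets, and since $Y_\gamma\in\Bos(X)$ these are $\Bos(X)$-sets, so $B\in\Hs(X)$ again by that fact. Combined with the already-established statement that a scattered union of resolvable sets is resolvable, this gives the case $\alpha=1$ (where $\Sigma_1=\Pi_1=\Hs$): if $A\cap Y_\gamma\in\Hs(Y_\gamma)$ for all $\gamma$, then $\{A\cap Y_\gamma:\gamma<\kappa\}$ is a scattered family of $X$-resolvable sets, so $A=\bigcup_\gamma(A\cap Y_\gamma)\in\Hs(X)$.

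For the inductive step at $\alpha\geq 2$ I would enumerate the countable ordinal set $[1,\alpha)=\{\delta_k:k\in\en\}$ and, in the $\Sigma_\alpha$ case, write $A\cap Y_\gamma=\bigcup_n B_{\gamma,n}$ with $B_{\gamma,n}\in\Pi_{\beta(\gamma,n)}(\Hs(Y_\gamma))$, $\beta(\gamma,n)<\alpha$. The naive glued set $\bigcup_\gamma B_{\gamma,n}$ is where the main obstacle lies: for a \emph{limit} $\alpha$ the classes $\beta(\gamma,n)$ may be cofinal in $\alpha$, so this union only lands in $\Sigma_\alpha$, and an outer intersection in the dual case would then overshoot to $\Pi_{\alpha+1}$. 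The remedy is to glue \emph{one class at a time}: set $C_{n,k}=\bigcup\{B_{\gamma,n}:\beta(\gamma,n)=\delta_k\}$, so that its trace on each $Y_\gamma$ is $B_{\gamma,n}$ (when $\beta(\gamma,n)=\delta_k$) or $\emptyset$, both in $\Pi_{\delta_k}(\Hs(Y_\gamma))$; applying the inductive hypothesis $(\Pi_{\delta_k})$ gives $C_{n,k}\in\Pi_{\delta_k}(\Hs(X))$ with $\delta_k<\alpha$, and $A=\bigcup_{n,k}C_{n,k}\in\Sigma_\alpha(\Hs(X))$.

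The $\Pi_\alpha$ case is dual but needs more careful padding: writing $A\cap Y_\gamma=\bigcap_n B_{\gamma,n}$ with $B_{\gamma,n}\in\Sigma_{\beta(\gamma,n)}(\Hs(Y_\gamma))$, I would set $C_{n,k}=\bigl(\bigcup\{B_{\gamma,n}:\beta(\gamma,n)=\delta_k\}\bigr)\cup\bigl(\bigcup\{Y_\gamma:\beta(\gamma,n)\neq\delta_k\}\bigr)$, padding the non-matching pieces with the \emph{whole} piece $Y_\gamma$ so that they impose no constraint under intersection. The second union is a scattered union of $\Bos(X)$-sets, hence lies in $\Hs(X)\subset\Sigma_{\delta_k}(\Hs(X))$, and the first lies in $\Sigma_{\delta_k}(\Hs(X))$ by the inductive hypothesis $(\Sigma_{\delta_k})$; thus $C_{n,k}\in\Sigma_{\delta_k}(\Hs(X))$ with $\delta_k<\alpha$. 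Using the disjointness and commutation identities one checks $\bigcap_{n,k}C_{n,k}=\bigcap_n\bigcup_\gamma B_{\gamma,n}=\bigcup_\gamma\bigcap_n B_{\gamma,n}=A$, whence $A\in\Pi_\alpha(\Hs(X))$. I expect the verification of these set-theoretic identities, together with the bookkeeping ensuring that each $C_{n,k}$ genuinely sits at a single class $\delta_k<\alpha$, to be the only delicate points; the remainder is routine.
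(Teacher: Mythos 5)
Your proof is correct, and its skeleton --- transfinite induction on $\alpha$ whose ground level is the fact that a scattered union of resolvable sets is resolvable, combined with the promotion of sets in $\Hs(Y_\gamma)$ to $\Hs(X)$ via \cite[Fact~4]{ho-pe} --- is the same as the paper's. The differences are in execution. The paper disposes of the multiplicative side by complementation: since $\Hs$ is an algebra, $(X\setminus A)\cap Y_\gamma=Y_\gamma\setminus(A\cap Y_\gamma)$ lies in $\Sigma_\alpha(\Hs(Y_\gamma))$ whenever $A\cap Y_\gamma\in\Pi_\alpha(\Hs(Y_\gamma))$, so the $\Pi_\alpha$ statement follows from the $\Sigma_\alpha$ one at every level essentially for free; your padding of the non-matching pieces by the whole $Y_\gamma$ achieves the same end directly, but at the cost of the extra set-theoretic identity $\bigcap_{n,k}C_{n,k}=A$ that you then have to verify. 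Conversely, the paper compresses everything above $\alpha=2$ into the phrase ``a straightforward transfinite induction'', and your grouping-by-class device (the sets $C_{n,k}$ collecting exactly those pieces whose trace sits at class $\delta_k$) is precisely the bookkeeping needed to make that induction work at limit $\alpha$, where the classes $\beta(\gamma,n)$ may be cofinal in $\alpha$ and naive gluing for fixed $n$ lands only in $\Sigma_\alpha$ rather than strictly below it; this is a genuine fill-in of a step the paper leaves implicit, not a detour. One small point to tighten: what you actually use is not that scatteredness ``passes to subfamilies'' but that it survives shrinking each member, i.e., that $\{B_\gamma\}$ is scattered whenever $B_\gamma\subset Y_\gamma$ and $\{Y_\gamma\}$ is scattered; this is immediate (if $Y_{\gamma_0}$ is relatively open in $\bigcup_{\gamma\in J}Y_\gamma$, then $B_{\gamma_0}=Y_{\gamma_0}\cap\bigcup_{\gamma\in J}B_\gamma$ is relatively open in $\bigcup_{\gamma\in J}B_\gamma$), but it is the fact needed for the families $\{A\cap Y_\gamma\}$ and $\{B_{\gamma,n}\}_{\gamma}$, so it deserves the one-line check rather than the subfamily remark.
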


\begin{proof}
If $\alpha=2$, the assertion for the additive class follows from the fact mentioned above that a scattered union of resolvable sets is again a resolvable sets. By taking complements we obtain the assertion for $\Pi_2(\Hs)$.
A straightforward transfinite induction then concludes the proof.
\end{proof}

For the sake of ompleteness, we include a proof of an easy observation mentioned in the introduction.

\begin{prop}
\label{baf}
If $X$ is a Tychonoff space, $\C_1(X)=\Baf_1(X)$.
\end{prop}

\begin{proof}
If $f\in\C_1(X)$, a straightforward reasoning gives $f\in\Baf_1(X)$. On the other hand, if $f\in\Baf_1(X)$, it is enough to assume that $f$ is real-valued. If $f$ is moreover bounded, a  standard procedure (see e.g. \cite[Lemma~5.7]{book-irt}) provides a uniform approximation by a sequence of simple functions, i.e., functions of the form $\sum_{i=1}^n c_i\chi_{A_i}$, where $c_1,\dots,c_n\in\er$ and $\{A_1,\dots,A_n\}$ is a disjoint cover of $X$ such that each $A_i$ is a countable unions of zero sets. A moment's reflection reveals that any such function is in $\C_1(X)$. Hence $f\in\C_1(X)$ as well.

If $f$ is unbounded, we take a homeomorphism $\varphi:\er\to (0,1)$ and apply the procedure above to $\varphi\circ f\in\Baf_1(X)$ to infer $\varphi\circ f\in\C_1(X)$.
We can then arrange an approximating sequence $(f_n)$ of continuous functions on $X$ in such a way that $0<f_n<1$, $n\in\en$. Then $\varphi^{-1}\circ f_n\to f$, and $f\in\C_1(X)$. 
\end{proof}

\section{Transfer of descriptive properties from $\ov{\ext X}$ to $X$}
\label{s:cl-boundary}

Throughout this section we work with real spaces. The main result is Theorem~\ref{transf-cl} on transferring descriptive properties of strongly affine functions from the closure of the set of extreme points.

\begin{lemma}\label{transferHset}
Let $K$ be a compact space and $H$ a universally measurable subset of $K$. Let $\wt{f}\colon \M^1(K)\to\er$ be defined as $\wt{f}(\mu) = \mu(H)$, $\mu\in\M^1(K)$. Then
\begin{itemize}
	\item $\wt{f} \in \Hf_1(\M^1(K))$ if $H\in \Hs(K)$,
	\item $\wt{f} \in \Bof_1(\M^1(K))$ if $H\in \Bos(K)$.
\end{itemize}
\end{lemma}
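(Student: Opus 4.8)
My plan is to treat the two cases separately, both resting on the elementary fact that for an open set $U\subseteq K$ the map $\mu\mapsto\mu(U)$ is weak*-lower semicontinuous on $\M^1(K)$, being the supremum over $g\in\C(K)$ with $0\le g\le\chi_U$ of the weak*-continuous maps $\mu\mapsto\mu(g)$; dually $\mu\mapsto\mu(F)$ is upper semicontinuous for closed $F$. A lower semicontinuous $g$ satisfies $\{g>a\}$ open and $\{g<a\}$ of type $F_\sigma$, hence is $\Sigma_2(\Hs)$- and $\Sigma_2(\Bos)$-measurable, so $g\in\Hf_1(\M^1(K))\cap\Bof_1(\M^1(K))$, and symmetrically for upper semicontinuous functions. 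These base functions, together with the stability of $\Hf_1$ and $\Bof_1$ under finite algebraic operations and uniform limits, are the only analytic inputs.

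For the Borel case I would use that every $H\in\Bos(K)$ is a finite disjoint union of locally closed sets $L_i=U_i\cap F_i$ (the standard description of the algebra generated by the open sets). By finite additivity $\wt f(\mu)=\sum_i\mu(L_i)$, and for a single locally closed set $\mu(U\cap F)=\mu(U)-\mu(U\setminus F)$ is a difference of two lower semicontinuous functions, hence lies in $\Bof_1(\M^1(K))$; summing finitely many such functions keeps us in $\Bof_1$.

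The resolvable case is the substantial one, and I would prove it by transfinite induction on the length $\kappa$ of a regular representation $\emptyset=U_0\subseteq\cdots\subseteq U_\kappa=K$, $H=\bigcup_{\gamma\in I}(U_{\gamma+1}\setminus U_\gamma)$. At a successor $\kappa=\rho+1$ the top layer $K\setminus U_\rho$ is closed and $H$ either avoids or contains it, so $\wt f(\mu)=\mu(H\cap U_\rho)$ or $\wt f(\mu)=\mu(H\cap U_\rho)+(1-\mu(U_\rho))$; the first summand has a strictly shorter regular representation (induction hypothesis, hence it is in $\Hf_1$) and the second is upper semicontinuous, so $\wt f\in\Hf_1(\M^1(K))$ by algebraic stability. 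The genuinely delicate step is the limit case, which I would settle by verifying fragmentation directly via Proposition~\ref{propHf}. Fixing $\delta>0$, the lower semicontinuity and the limit clause of the regular representation show that $W_\gamma:=\{\mu:\mu(U_\gamma)>1-\delta\}$ is itself a regular sequence of open sets in $\M^1(K)$. Given a nonempty closed $S\subseteq\M^1(K)$, I take the least $\gamma_0=\eta+1$ with $W_{\gamma_0}\cap S\neq\emptyset$ (necessarily a successor, and $<\kappa$ because $\kappa$ is a limit); on the relatively open nonempty set $W_{\eta+1}\cap S$ every $\mu$ satisfies $\mu(K\setminus U_{\eta+1})<\delta$, whence $|\wt f(\mu)-\mu(H\cap U_{\eta+1})|<\delta$ there. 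As $H\cap U_{\eta+1}$ has a shorter regular representation, $\mu\mapsto\mu(H\cap U_{\eta+1})$ is fragmented by the induction hypothesis, so inside $W_{\eta+1}\cap S$ one finds a relatively open nonempty set on which it oscillates by less than $\delta$; there $\wt f$ oscillates by less than $3\delta$. Taking $\delta=\varepsilon/3$ yields fragmentation of $\wt f$.

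The main obstacle is exactly this limit step: passing from the regular representation of $H$ inside $K$ to the induced regular sequence $(W_\gamma)$ inside $\M^1(K)$, and controlling the tail mass $\mu(K\setminus U_{\eta+1})$ uniformly on a relatively open piece of $S$, so that small-complexity information at shorter lengths is transported upward. Once the induction is organised with the successor and limit cases as above, the equivalence of $\Hf_1$ with fragmentability (Proposition~\ref{propHf}) makes the remaining bookkeeping routine.
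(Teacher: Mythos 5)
Your Borel case is fine and matches the paper's (a set in $\Bos(K)$ is a finite disjoint union of locally closed pieces, and each $\mu\mapsto\mu(U\cap F)$ is a difference of bounded semicontinuous functions, hence in $\Bof_1$ by algebraic stability). Your limit step is also sound, and the observation that $W_\gamma=\{\mu\in\M^1(K):\mu(U_\gamma)>1-\delta\}$ is again a regular sequence in $\M^1(K)$ (so the least $\gamma_0$ with $W_{\gamma_0}\cap S\neq\emptyset$ is a successor $\eta+1<\kappa$) is correct and rather elegant; moreover $H\cap U_{\eta+1}$ really does have a representation of length $\eta+2<\kappa$, obtained by appending $K$ on top of the truncated sequence, so the induction hypothesis legitimately applies there.

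The gap is the successor step. You claim that $H\cap U_\rho$ admits a regular representation of length strictly less than $\kappa=\rho+1$; this is false in general, and when $\rho\notin I$ your reduction is in fact circular. Concretely, take any nonempty proper open $U\subsetneq K$ and the representation $\emptyset=U_0\subset U_1=U\subset U_2=K$ with $I=\{0\}$, so $H=U$ and $\kappa=2$. The top layer $K\setminus U_1$ is avoided, hence $H\cap U_1=H$ and your successor step asks the induction hypothesis to handle $\mu\mapsto\mu(H)$ itself; but $U$ has no regular representation of length $1$, since a length-one representation can only produce $\emptyset$ or $K$. In general, appending $K$ to the truncated sequence always yields length $\rho+1=\kappa$ again, never less, so peeling the top layer loses no complexity precisely in the case $\rho\notin I$, where nothing is actually peeled. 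The repair is the paper's formulation: keep the sequence $(U_\gamma)_{\gamma\le\kappa}$ fixed and prove by transfinite induction on the \emph{position} $\gamma\le\kappa$ that $\mu\mapsto\mu(H\cap U_\gamma)$ lies in $\Hf_1(\M^1(K))$; the successor step then compares $H\cap U_{\delta+1}$ with $H\cap U_\delta$ (their difference being either $0$ or $\mu(U_{\delta+1})-\mu(U_\delta)$, a difference of lower semicontinuous functions) and needs no shortened representation. Be warned that in this repaired induction your limit trick no longer suffices at intermediate limit positions $\gamma$ with $U_\gamma\neq K$: all the sets $W_\beta$, $\beta<\gamma$, may miss a closed set $S$ (for instance if every $\mu\in S$ satisfies $\mu(U_\gamma)\le\tfrac12$) while $\mu\mapsto\mu(H\cap U_\gamma)$ still oscillates on $S$; there one must calibrate to $s=\sup\{\mu(U_\gamma):\mu\in M\}$ and work with $\{\mu:\mu(U_\beta)>s-\tfrac{\ep}{4}\}$, which is exactly what the paper's limit argument does.
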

\begin{proof}
We first assume that $H$ is a resolvable set. We select a regular sequence $(U_\gamma)_{\gamma\leq \kappa}$ which provides a regular representation of $H$ as mentioned in Section~\ref{s:des}. We prove  by transfinite induction that, 
\emph{for every $\gamma\leq \kappa$, the function $\mu\mapsto\mu(H\cap U_\gamma)$ is in $\Hf_1(\M^1(K))$.}  

The statement holds trivially for $\gamma = 0$.

We suppose now that $\gamma\leq \kappa$ is of the form $\gamma = \delta + 1$ and the claim is valid for $\delta$. Then, for every $\mu \in \M^1(K)$, we have
\[
\mu(H\cap U_\gamma) = \mu ( H \cap U_{\delta}) + \mu( H \cap (U_{\delta + 1} \setminus U_{\delta})).
\]
The second summand is either equal to $0$ or $\mu ( U_{\delta+1}) - \mu ( U_{\delta})$. Since the function $\mu\mapsto\mu(U)$ is lower semicontinuous on $\M^1(K)$ for every open set $U\subset K$, it follows e.g. from \cite[Theorem~2.3]{koum} that the function $\mu\mapsto \mu(U_{\delta+1})-\mu(U_\delta)$ is in $\Hf_1(\M^1(K))$.

The function $\mu \to \mu(H\cap U_\delta)$ is in $\Hf_1(\M^1(K))$ due to the induction hypothesis.
Thus $\mu\mapsto \mu(H)$, as a sum of two functions in $\Hf_1(\M^1(K))$, is in $\Hf_1(\M^1(K))$ as well. 

Assume now that $\gamma\leq \kappa$ is a limit ordinal and the statement holds for each ordinal smaller than $\gamma$. 
Let $\wt{f}(\mu)=\mu(H\cap U_\gamma)$, $\mu\in\M^1(K)$.
Assuming $\wt{f}$ is not in $\Hf_1(\M^1(K))$, Proposition~\ref{propHf} provides a nonempty set $M\subset \M^1(K)$ and $\ep>0$ such that $\diam \wt{f}(M\cap V)> \ep$ for each open set $V\subset \M^1(K)$ intersecting $M$. 
Let 
\[
s=\sup\{\mu(U_\gamma)\colon \mu\in M\}
\]
and let $\mu_0\in M$ be chosen such that $\mu_0(U_\gamma)>s-\frac{\ep}{4}$. By the regularity of $\mu_0$, there exists $\delta<\gamma$ with $\mu_0(U_\delta)>s-\frac{\ep}{4}$.
Then the set 
\[
V=\{\mu \in \M^1(K): \mu(U_{\delta}) > s-\frac{\ep}4\}
\]
is an open neighborhood of $\mu_0$. 

Let $\wt{h}:\M^1(K) \to\er$ be defined as $\wt{h}(\mu)=\mu(H\cap U_{\delta})$. 
Then we have
\[
|\wt{h}(\mu) - \wt{f}(\mu)| = | \mu(H\cap U_{\delta}) - \mu(H\cap U_\gamma)| \leq |\mu(U_{\gamma}\setminus U_\delta)|\leq
s-(s-\frac{\ep}{4})=\frac{\ep}{4},\quad \mu\in M\cap V,
\]
and, by the induction hypothesis,  $\wt{h}$ is in $\Hf_1(\M^1(K))$ which means that $\wt{h}$ is fragmented.

Thus there exists an open set $W\subset \M^1(K)$ intersecting $M\cap V$ such that $\diam \wt{h}(M\cap V \cap W) < \frac{\ep}{4}$. 
By the assumption, there exist $\mu_1,\mu_2 \in M\cap V\cap W$ satisfying $|\wt{f}(\mu_1)-\wt{f}(\mu_2)| \geq \ep$. On the other hand we have
\[
|\wt{f}(\mu_1) - \wt{f}(\mu_2)| \leq | \wt{f}(\mu_1) - \wt{h}(\mu_1)| + |\wt{h}(\mu_1) -\wt{h}(\mu_2)| + |\wt{h}(\mu_2) - \wt{f}(\mu_2)| \leq \frac34 \ep,
\]
which is a contradiction. Thus $\wt{f}$ is fragmented. This proves the claim as well as the proof of the first assertion.

Assume now that $H\in \Bos(K)$. Then $H$ can be written as a finite disjoint union of differences of closed sets (see e.g. \cite[Lemma~5.12]{book-irt}), i.e., $H=\bigcup_{i=1}^n E_i\setminus F_i$, where $F_i\subset E_i$ are closed and the family $\{E_1\setminus F_1,\dots, E_n\setminus F_n\}$ is  disjoint. Then the function $\mu\mapsto \mu(E_i\setminus F_i)$, as a difference of a couple of upper semicontinuous functions on $\M^1(K)$, is in $\Bof_1(\M^1(K))$ for each pair $E_i, F_i$.

Hence $\mu\mapsto\mu(H)$, $\mu\in\M^1(K)$, is a finite union of functions in $\Bof_1(\M^1(K))$, and thus contained in $\Bof_1(\M^1(K))$.
\end{proof}

\begin{lemma}\label{tran-f}
Let $f:K\to \er$ be a bounded universally measurable function and let $\wt{f}\colon \M^1(K)\to\er$ be defined as $\wt{f}(\mu)=\mu(f)$, $\mu\in\M^1(K)$. Then 
\begin{itemize}
	\item $\wt{f} \in \Hf_1(\M^1(K))$ if $f \in \Hf_1(K)$,
	\item $\wt{f} \in \Bof_1(\M^1(K))$ if $f \in \Bof_1(K)$.
\end{itemize}
\end{lemma}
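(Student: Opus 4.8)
The plan is to reduce the statement to the case of indicator functions and then pass to a general $f$ by uniform approximation, treating both assertions at once by letting $\F$ stand for one of the algebras $\Hs$ or $\Bos$ and writing $\F_1$ for the corresponding class $\Hf_1$ or $\Bof_1$. The key preliminary \emph{claim} I would establish is that for every set $A\in\Delta_2(\F(K))$ the function $\mu\mapsto\mu(A)$ belongs to $\F_1(\M^1(K))$. Since $A$ lies simultaneously in $\Sigma_2(\F)$ and $\Pi_2(\F)$, I can write $A=\bigcup_n B_n=\bigcap_m C_m$ with $B_n,C_m\in\F$, and after passing to $B_1\cup\dots\cup B_n$ and $C_1\cap\dots\cap C_m$ I may assume $B_n\uparrow A$ and $C_m\downarrow A$. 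All these sets are universally measurable, so continuity of measure from below and (using finiteness of $\mu$) from above gives $\mu(A)=\sup_n\mu(B_n)=\inf_m\mu(C_m)$ for every $\mu\in\M^1(K)$. By Lemma~\ref{transferHset} the functions $\mu\mapsto\mu(B_n)$ and $\mu\mapsto\mu(C_m)$ all lie in $\F_1(\M^1(K))$, hence their super- and sublevel sets lie in $\Sigma_2(\F(\M^1(K)))$. Consequently
\[
\{\mu:\mu(A)>a\}=\bigcup_n\{\mu:\mu(B_n)>a\},\qquad \{\mu:\mu(A)<b\}=\bigcup_m\{\mu:\mu(C_m)<b\}
\]
are again in $\Sigma_2(\F(\M^1(K)))$, that family being stable under countable unions and finite intersections. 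Writing an arbitrary open subset of $\er$ as a countable union of intervals $(a,\infty)\cap(-\infty,b)$ then shows $\mu\mapsto\mu(A)$ is $\Sigma_2(\F)$-measurable, i.e.\ lies in $\F_1(\M^1(K))$.

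Next I would invoke the uniform approximation of bounded class-one functions: a bounded $f\in\F_1(K)$ is a uniform limit of simple functions $g_k=\sum_{i=1}^{n_k}c_i^k\chi_{A_i^k}$ whose pieces $A_i^k$ form a finite partition of $K$ into sets of $\Delta_2(\F)$ (this is the analogue, for $\Bos$ and $\Hs$, of the construction used in Proposition~\ref{baf} and \cite[Lemma~5.7]{book-irt}). Applying the claim to each piece and using that $\F_1(\M^1(K))$ is stable under finite algebraic combinations (\cite[Theorem~5.10]{book-irt}), every transferred function $\mu\mapsto\mu(g_k)=\sum_i c_i^k\mu(A_i^k)$ lies in $\F_1(\M^1(K))$. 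Finally, since each $\mu$ is a probability measure, $|\wt f(\mu)-\wt{g_k}(\mu)|=|\mu(f-g_k)|\le\|f-g_k\|_\infty\to0$ uniformly on $\M^1(K)$, so $\wt f$ is a uniform limit of functions from $\F_1(\M^1(K))$ and therefore belongs to $\F_1(\M^1(K))$ by stability under uniform convergence.

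The heart of the matter, and the step I expect to be the main obstacle, is the insistence that the pieces be \emph{ambiguous} of class two ($\Delta_2(\F)$) rather than lying in the algebra $\F$ itself: one cannot in general approximate by $\Bos$-simple functions, since for $A\in\Delta_2(\Bos)\setminus\Bos$ the indicator $\chi_A$ admits no such uniform approximation (the set $\{g>\tfrac12\}$ would be forced to equal $A$). This is exactly why the first paragraph must use both the $\Sigma_2$ and the $\Pi_2$ representation of $A$ and apply Lemma~\ref{transferHset} from both sides. The requisite partition is produced by the reduction property of $\Sigma_2(\F)$: covering $K$ by finitely many sets $S_i=f^{-1}(I_i)\in\Sigma_2(\F)$ indexed by short overlapping open intervals $I_i$, and writing $S_i=\bigcup_n D_{i,n}$ with $D_{i,n}\in\F$ increasing and $D_n=\bigcup_i D_{i,n}$, the stagewise selection
\[
A_i=\bigcup_{n\in\en}\Bigl(D_{i,n}\setminus\bigl(D_{n-1}\cup\bigcup_{j<i}D_{j,n}\bigr)\Bigr)
\]
yields disjoint sets in $\Sigma_2(\F)$ with $\bigcup_i A_i=K$, because each bracketed term is a Boolean combination of members of the algebra $\F$; being a finite partition into $\Sigma_2(\F)$ sets, each $A_i$ is automatically also in $\Pi_2(\F)$, hence in $\Delta_2(\F)$, as required. (Note this argument uses only that $\F$ is an algebra, so it applies verbatim to $\Hs$ and to $\Bos$.)
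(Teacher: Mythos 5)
Your proposal is correct and follows essentially the same route as the paper's proof: reduce to indicators of $\Delta_2$ sets, transfer the algebra-level pieces via Lemma~\ref{transferHset} together with continuity of measure to get $\Sigma_2$-measurability of $\mu\mapsto\mu(A)$, then finish by uniform approximation with $\Delta_2$-simple functions and stability of the class under uniform limits. The only cosmetic differences are that you handle the sublevel sets through the decreasing $\Pi_2$ representation (where the paper uses $\mu(A)<c\iff\mu(K\setminus A)>1-c$, both exploiting that $\mu$ is a probability measure) and that you spell out the disjointification into $\Delta_2(\F)$ pieces that the paper simply cites as a standard lemma.
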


\begin{proof}
We begin with the proof for $f\in\Hf_1(K)$. First, if $f = \chi_{A}$ is the characteristic function of a set $A \in \Delta_2(\Hs(K))$, we write $A=\bigcup_n A_n$, where $A_1\subset A_2\subset \cdots$ are sets in $\Hs(K)$.  If $c\in \er$ is given, we have from Lemma \ref{transferHset} that
\[
\{\mu\in \M^1(K) : \wt{f}(\mu) > c \} = \bigcup_{n=1}^{\infty} \{ \mu\in\M^1(K): \mu(A_n)>c \} \in \Sigma_2(\Hs(K)).
\]
On the other hand, $K \setminus A \in \Sigma_2(\Hs(K))$ and hence it follows from the previous reasoning that 
\[
\{\mu\in \M^1(K) : \wt{f}(\mu) < c \} = \{ \mu\in\M^1(K): \mu(K\setminus A) > 1-c \} \in \Sigma_2(\Hs(K)).
\]
We conclude that $\wt{f}$ is $\Sigma_2(\Hs(\M^1(K)))$-measurable and hence $\wt{f}\in \Hf_1(\M^1(K))$. 

If $f\in\Hf_1(K)$ is bounded, it can be uniformly approximated by simple functions in $\Hf_1(K)$, i.e., functions of the form $\sum_{i=1}^n c_i \chi_{A_i}$, where $A_1,\dots, A_n\in\Delta_2(\Hs(K))$ are pairwise disjoint and $c_1,\dots, c_n\in \er$ (this standard procedure can be found e.g. in \cite[Lemma~5.7]{book-irt}). Hence $\wt{f}$ can be uniformly approximated by functions in $\Hf_1(\M^1(K))$, and thus $\wt{f}\in\Hf_1(\M^1(K))$.

The proof for $f\in \Bof_1(K)$ would proceed in a similar fashion.
\end{proof}

\begin{lemma}\label{transM}
Let $K$ be a compact space and $f:K\to \er$ be a bounded universally measurable function. Let $\wt{f}\colon \M^1(K)\to\er$ be defined as $\wt{f}(\mu)=\mu(f)$, $\mu\in\M^1(K)$. Then,
\begin{itemize} 
\item [(a)] for  $\alpha\in [1,\omega_1)$, $f\in \Hf_\alpha(K)$ if and only if $\wt{f}\in  \Hf_\alpha(\M^1(K))$,
\item [(b)] for  $\alpha\in [1,\omega_1)$, $f\in \Bof_\alpha(K)$ if and only if $\wt{f}\in  \Bof_\alpha(\M^1(K))$,
\item [(c)] for  $\alpha\in [0,\omega_1)$, $f\in \C_\alpha(K)$ if and only if $\wt{f}\in \C_\alpha(\M^1(K))$.
\end{itemize}
\end{lemma}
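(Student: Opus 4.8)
My plan is to treat all three equivalences at once, separating in each the easy implication (from $\wt f$ to $f$) from the substantial one (from $f$ to $\wt f$), and to prove the latter by transfinite induction on $\alpha$.

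For the easy implications I would use the evaluation embedding $e\colon K\to\M^1(K)$, $e(x)=\delta_x$. Since $g\mapsto\int g\,d\delta_x=g(x)$ is weak* continuous in $x$ for every $g\in\C(K)$, the map $e$ is continuous, and plainly $\wt f\circ e=f$. Now each of the hierarchies $\Hf_\alpha$, $\Bof_\alpha$, $\C_\alpha$ is preserved under composition with a continuous map: for $\C_\alpha$ this is a routine transfinite induction starting from $\C_0=\C$, while for $\Hf_\alpha$ and $\Bof_\alpha$ it follows from the measurability characterization recalled in Section~\ref{s:des} together with the observation that a continuous preimage of a resolvable (resp.\ open) set is again resolvable (resp.\ open). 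Indeed, if $(U_\gamma)_{\gamma\le\kappa}$ is a regular sequence with $H=\bigcup_{\gamma\in I}(U_{\gamma+1}\setminus U_\gamma)$, then $(g^{-1}(U_\gamma))_{\gamma\le\kappa}$ is again regular and represents $g^{-1}(H)$. Hence membership of $\wt f$ in a given class on $\M^1(K)$ immediately forces $f=\wt f\circ e$ into the corresponding class on $K$, proving the ``if'' part of (a)--(c) without any induction.

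For the converse I would induct on $\alpha$. The base cases are exactly Lemma~\ref{tran-f} for $\Hf_1$ and $\Bof_1$, while for the Baire hierarchy the base case $\alpha=0$ is the observation that $\wt f(\mu)=\mu(f)$ is weak* continuous whenever $f\in\C(K)$. For the inductive step, assume $f$ is bounded, say $|f|\le M$, universally measurable, and lies in the $\alpha$-th class, and write $f=\lim_n f_n$ pointwise with each $f_n$ in a strictly lower class. The functions in these lower classes are universally measurable (resolvable, Borel and Baire measurable functions all being universally measurable on $K$), so each $\wt{f_n}$ is well defined. The difficulty is that a representing sequence need not be uniformly bounded, so one cannot yet pass $\mu(f_n)\to\mu(f)$ through the limit. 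I would remove this obstruction by truncation: replacing $f_n$ with $\max(-M,\min(f_n,M))$ keeps each $f_n$ in its class (these classes being stable under composition with the continuous truncation map), and, since $|f|\le M$, leaves the pointwise limit equal to $f$, while now supplying the uniform bound $M$. Applying the induction hypothesis to these bounded universally measurable truncations places each $\wt{f_n}$ in the appropriate lower class on $\M^1(K)$; and for each fixed $\mu\in\M^1(K)$ the bounded convergence theorem gives $\wt{f_n}(\mu)=\mu(f_n)\to\mu(f)=\wt f(\mu)$. Thus $\wt f$ is a pointwise limit of functions from strictly lower classes, hence lies in the $\alpha$-th class on $\M^1(K)$. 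The same argument runs verbatim in the resolvable, Borel and Baire settings, the only difference being the source of the base case.

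I expect the only genuine obstacle to be the passage of the integral through the pointwise limit; everything else is bookkeeping over $\alpha$. The truncation step is what makes the bounded convergence theorem applicable, and it is essential here that truncation neither raises the descriptive class nor disturbs the limit — both guaranteed by the standing hypothesis that $f$ is bounded.
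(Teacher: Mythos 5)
Your proof is correct and takes essentially the same route as the paper: the easy direction via the Dirac-measure embedding, and the converse by transfinite induction anchored at Lemma~\ref{tran-f} for $\Hf_1$, $\Bof_1$ (and continuity for $\C_0$). Your truncation-plus-bounded-convergence argument is precisely the content of the inductive step that the paper compresses into ``a straightforward induction.''
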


\begin{proof}
The "if" parts of the proof easily follows from the fact $ f = \wt{f} \circ \phi$ where $\phi:K\to \M^1(K)$ sending a point $x\in K$ to the Dirac measure $\ep_x$ at $x$ is a homeomorphic embedding.

The proof of "only if" part will be given by transfinite induction. If $\alpha=1$ in (a) and (b), the assertion follows from Lemma~\ref{tran-f}, the case $\alpha=0$ in (c) is obvious.

The assertions for higher ordinals $\alpha$ now follows by a straightforward induction.
\end{proof}

As we mentioned in the introduction, the following theorem is a generalization of \cite[Corollaire~8]{sr-kvocient}.

\begin{thm}\label{transf-cl}
Let $X$ be a compact convex set and $f:X\to\er$ be a strongly affine function. Then, 
\begin{itemize} 
\item for  $\alpha\in [1,\omega_1)$, $f\r_{\ov{\ext X}}\in \Hf_\alpha(\ov{\ext X})$ if and only if $f\in  \Hf_\alpha(X)$,
\item for  $\alpha\in [1,\omega_1)$, $f\r_{\ov{\ext X}}\in \Bof_\alpha(\ov{\ext X})$ if and only if $f\in  \Bof_\alpha(X)$,
\item for  $\alpha\in [0,\omega_1)$, $f\r_{\ov{\ext X}}\in \C_\alpha(\ov{\ext X})$ if and only if $f\in  \C_\alpha(X)$.
\end{itemize}
\end{thm}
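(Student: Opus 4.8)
The plan is to prove all three equivalences at once by pushing the problem into the spaces of probability measures, where Lemma~\ref{transM} furnishes exactly the bridge between a function and its descriptive class. Write $K=\ov{\ext X}$, a compact subset of $X$, and let $r:\M^1(X)\to X$ be the barycenter map. One implication is routine: if $f\in\Hf_\alpha(X)$ (respectively $\Bof_\alpha(X)$, $\C_\alpha(X)$), then $f\r_K$ lies in the corresponding class on $K$, since the trace on $K$ of a resolvable (Borel, cozero) set is again resolvable (Borel, cozero) and restriction commutes with pointwise limits. The whole content is therefore the converse, and this is precisely where the strong affinity of $f$ must be used; it is indispensable, as the example following Theorem~\ref{main1} shows that the transfer fails completely without it.

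For the converse I would proceed as follows. Because $f$ is strongly affine, $\wt f(\mu):=\mu(f)=f(r(\mu))$ for every $\mu\in\M^1(X)$, so $\wt f$ factors through the barycenter, and $f=\wt f\circ\phi$ where $\phi:x\mapsto\ep_x$ is the continuous Dirac embedding of $X$ into $\M^1(X)$. By Lemma~\ref{transM} applied to $X$, membership of $f$ in $\Hf_\alpha(X)$ (respectively $\Bof_\alpha(X)$, $\C_\alpha(X)$) is equivalent to membership of $\wt f$ in $\Hf_\alpha(\M^1(X))$ (respectively the other two classes). Applying the same lemma to the compact space $K$, the hypothesis $f\r_K\in\Hf_\alpha(K)$ is equivalent to $\wt f\r_{\M^1(K)}\in\Hf_\alpha(\M^1(K))$, where $\M^1(K)$ is viewed as the weak* closed subset of $\M^1(X)$ consisting of the measures carried by $K$ (and on that subset $\wt f$ indeed equals the function attached to $f\r_K$). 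Thus everything reduces to the single assertion that $\wt f\r_{\M^1(K)}\in\Hf_\alpha(\M^1(K))$ forces $\wt f\in\Hf_\alpha(\M^1(X))$, and likewise for the Borel and Baire scales.

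To carry out this extension from the closed subspace $\M^1(K)$ to all of $\M^1(X)$, I would construct a continuous map $T:\M^1(X)\to\M^1(K)$ that preserves barycenters, i.e.\ $r\circ T=r$. Granting such a $T$, the fact that $\wt f$ depends only on the barycenter yields $\wt f=\wt f\circ T=(\wt f\r_{\M^1(K)})\circ T$, exhibiting $\wt f$ as the composition of a function in $\Hf_\alpha(\M^1(K))$ (respectively $\Bof_\alpha$, $\C_\alpha$) with a continuous map; hence $\wt f\in\Hf_\alpha(\M^1(X))$ (respectively the other classes), which is exactly what we need. Note that the same $T$ settles all three scales simultaneously, so no separate transfinite induction on $\alpha$ is required once $T$ is in hand. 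A barycenter-preserving $T$ is nothing but a continuous selection of representing measures living on $\ov{\ext X}$: it suffices to build a continuous section $s:X\to\M^1(K)$ of the barycenter map $r_K:\M^1(K)\to X$, which is surjective by Krein--Milman, and set $T=s\circ r$.

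The construction of this continuous barycenter-preserving $T$ is the step I expect to be the main obstacle, and it is the genuine heart of the argument: a continuous surjection in general does not reflect descriptive classes, so without a continuous lifting the composition argument collapses. On the extreme points any section is forced to be the Dirac map, and the difficulty is to extend it continuously over all of $X$ while keeping the values among measures supported on $K$. I would obtain $T$ from a continuous section of the pushforward $(r_K)_*:\M^1(\M^1(K))\to\M^1(X)$ induced by the continuous affine surjection $r_K$ (a regular-averaging-type selection), composed with the iterated-barycenter map $\M^1(\M^1(K))\to\M^1(K)$; the commutation of barycenters with continuous affine maps then gives $r_K\circ T=r$. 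In the metrizable case such a section can be produced by selection theorems, whereas establishing it in the required generality, for arbitrary compact convex $X$, is the delicate point on which the proof stands or falls and where I would concentrate the technical effort.
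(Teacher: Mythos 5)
Your reduction coincides with the paper's proof up to the decisive step: the ``if'' direction by restriction, and, for the converse, Lemma~\ref{transM} together with strong affinity, so that $\wt{f}(\mu)=\mu(f)=f(r(\mu))$ lies in the relevant class on $\M^1(\ov{\ext X})$ and factors through the continuous surjection $r\colon\M^1(\ov{\ext X})\to X$. At this point the paper \emph{descends along the quotient}: it invokes the theorems stating that for a continuous surjection of compact spaces, membership of $f\circ r$ in one of the classes forces $f$ into the same class (\cite[Theorem~5.9.13]{ROJA} and \cite[Theorem~5.26]{book-irt} for $\C_\alpha$, \cite[Theorems~4 and~10]{HoSp} for $\Bof_\alpha$ and $\Hf_\alpha$). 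You instead propose to invert $r$ by a continuous barycenter-preserving map $T\colon\M^1(X)\to\M^1(K)$, $K=\ov{\ext X}$, equivalently (restrict $T$ to Dirac measures) a continuous section $s\colon X\to\M^1(K)$ of the barycenter map $r_K\colon\M^1(K)\to X$. This is a genuine gap, and not a merely technical one: such a section fails to exist even in the simplest situations covered by the theorem.

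Concretely, take $E=\C([0,1])$ (real scalars) and $X=B_{E^*}=B_{\M([0,1])}$, so that $K=\ov{\ext X}=\ext X=\{\ep_t\colon t\in[0,1]\}\cup\{-\ep_t\colon t\in[0,1]\}$ is compact. A measure $\Lambda\in\M^1(K)$ splits into its parts on the two copies of $[0,1]$, giving $\sigma^{+},\sigma^{-}\in\M^+([0,1])$ with $\|\sigma^+\|+\|\sigma^-\|=1$, and the barycenter of $\Lambda$ is $\sigma^+-\sigma^-$; conversely, every point of the fiber $r_K^{-1}(\mu)$ arises this way, and one checks that necessarily $\sigma^\pm=\mu^\pm+\tau$ with $\tau\geq0$ and $\|\tau\|=\tfrac12(1-\|\mu\|)$. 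Hence for $\|\mu\|=1$ the fiber is a \emph{singleton}, given by the Jordan decomposition. Now fix $q\in[0,1]$ and distinct sequences $a_n\to q$, $b_n\to q$. The measures $\mu_n=\tfrac12(\ep_{a_n}-\ep_{b_n})$ have norm one, so any selection is forced there: $s(\mu_n)$ must be the measure placing mass $\tfrac12$ at the point $\ep_{a_n}\in K$ and $\tfrac12$ at $-\ep_{b_n}\in K$. As $n\to\infty$ we have $\mu_n\to0$ weak*, while $s(\mu_n)$ converges to the measure with mass $\tfrac12$ at $\ep_q$ and $\tfrac12$ at $-\ep_q$; continuity of $s$ at $0$ would therefore pin $s(0)$ to this value for \emph{every} $q\in[0,1]$ simultaneously, a contradiction. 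So no continuous $s$, hence no $T$, exists; and since your $T$ would be obtained from a continuous section of $(r_K)_\sharp$ by composing with the iterated barycenter map, that section cannot exist either. This also explains why no standard tool can fill the hole: Michael/Lazar-type selection theorems need lower semicontinuity of the fiber map, i.e.\ openness of $r_K$, which fails here, and the best general affine result is Talagrand's \emph{Baire-one} selection of maximal measures \cite{talagr4} (used by the paper in the proof of Theorem~\ref{lind2}), which is too weak for your composition argument. The quotient theorems the paper cites exist precisely because continuous surjections of compacta admit no such sections in general, yet descriptive classes still descend; your argument needs exactly that fact and cannot bypass it by a selection.
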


\begin{proof}
It is easy to realize that all the families $\Hf_\alpha$, $\Bof_\alpha$ and $\C_\alpha$ are preserved by making restrictions to subspaces of $X$. This observation gives the "if" parts of the proof.

For the proof of the "only if" parts, let $f:X\to\er$ be a strongly affine function with $f\r_{\ov{\ext X}}\in \F(\ov{\ext X})$ where $\F$ is any of the classes $\Hf_\alpha$, $\Bof_\alpha$ or $\C_\alpha$. 
Then the function $\wt{g}: \M^1(\ov{\ext X}) \to \er$ defined as 
\[
\wt{g}(\mu) = \mu(f),\quad \mu\in\M^1(\ov{\ext X}),
\]
is in $\F(\M^1(\ov{\ext X}))$ by Lemma~\ref{transM}.

The mapping $r: \M^1(\ov{\ext X}) \to X$, which assigns $\mu\in \M^1(\ov{\ext X})$ its barycenter $r(\mu)\in X$, is a continuous surjection of a compact space $\M^1(\ov{\ext X})$ onto $X$ (see \cite[Proposition~I.4.6 and Theorem~I.4.8]{alfsen} or \cite[Theorem 3.65 and Proposition~3.64]{book-irt}).

From the strong affinity of $f$ we have $\wt{g} = f \circ r$.  Now we use the fact that $\wt{g}\in \F(\M^1(\ov{\ext X}))$ if and only if $f\in \F(X)$. This fact can be found in \cite[Theorem~5.9.13]{ROJA} and \cite[Theorem~5.26]{book-irt} for classes $\C_\alpha$, and in \cite[Theorems~4 and~10]{HoSp} for classes $\Bof_\alpha$ and $\Hf_\alpha$ (see also \cite[Theorem~5.26]{book-irt}). 
Thus the function $f$ is in $\F(X)$.
\end{proof}


\section{Auxiliary result on compact convex sets with $\ext X$ being \lin}
\label{s:lind}

Throughout this section we work with spaces over the field of real numbers. We aim for the proof of Proposition~\ref{lind0.03} which is a fact used both in Section~\ref{s:transf-lind} and~\ref{s:transf-resol}. We recall that a topological space $X$ is \emph{$K$-analytic} if it is an image of a Polish space under an upper semicontinuous compact-valued map (see \cite[Section~2.1]{ROJA}).

\begin{lemma}\label{lind0.01}
Let $\varphi\colon X\to Y$ be a continuous surjection of a $K$-analytic space $X$ onto a $K$-analytic space $Y$ and let $g:Y\to\er$. Then $g$ is a Baire function on $Y$ if and only if $g\circ\varphi$ is a Baire function on $X$.
\end{lemma}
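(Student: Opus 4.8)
The plan is to prove the two implications separately: the forward implication ($g$ Baire $\Rightarrow g\circ\varphi$ Baire) is elementary and uses only the continuity of $\varphi$, while the converse is where the $K$-analyticity is essential. For the forward implication I would argue by transfinite induction on the Baire class of $g$. If $g\in\C_0(Y)=\C(Y)$, then $g\circ\varphi$ is continuous, so $g\circ\varphi\in\C_0(X)$. If $g\in\C_\alpha(Y)$ is the pointwise limit of a sequence $(g_n)$ taken from $\bigcup_{\beta<\alpha}\C_\beta(Y)$, then $g_n\circ\varphi\in\bigcup_{\beta<\alpha}\C_\beta(X)$ by the induction hypothesis, and $g_n\circ\varphi\to g\circ\varphi$ pointwise on $X$ (since $g_n(\varphi(x))\to g(\varphi(x))$ for every $x\in X$), whence $g\circ\varphi\in\C_\alpha(X)$. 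Neither surjectivity nor $K$-analyticity is needed here.

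For the converse I would pass to the level of sets. Recall from Section~\ref{s:des} that a real-valued function is a Baire function exactly when it is measurable with respect to the $\sigma$-algebra of Baire sets, so it suffices to show that $A:=g^{-1}(U)$ is a Baire subset of $Y$ for every open $U\subset\er$. Since $g\circ\varphi$ is Baire-measurable on $X$, the set $\varphi^{-1}(A)=(g\circ\varphi)^{-1}(U)$ and its complement $\varphi^{-1}(Y\setminus A)=X\setminus\varphi^{-1}(A)$ are both Baire subsets of $X$. In the $K$-analytic space $X$ every Baire set is $K$-analytic (being Borel), and the continuous image of a $K$-analytic set is $K$-analytic (see \cite{ROJA}); using the surjectivity of $\varphi$ in the form $\varphi(\varphi^{-1}(S))=S$, we conclude that both
\[
A=\varphi(\varphi^{-1}(A))\quad\text{and}\quad Y\setminus A=\varphi(\varphi^{-1}(Y\setminus A))
\]
are $K$-analytic subsets of $Y$.

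Now $A$ and $Y\setminus A$ are disjoint $K$-analytic sets that partition $Y$, so by the separation theorem for $K$-analytic sets (see \cite{ROJA}) there is a Baire set $S\subset Y$ with $A\subset S$ and $S\cap(Y\setminus A)=\emptyset$; the latter forces $S\subset A$, hence $S=A$ and $A$ is a Baire set. As $U$ was an arbitrary open subset of $\er$, the function $g$ is Baire-measurable and therefore a Baire function on $Y$, which completes the proof.

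The main obstacle is the converse direction, and within it the separation step: one must know both that Baire (Borel) subsets of a $K$-analytic space are $K$-analytic and that two disjoint $K$-analytic sets can be separated by a genuine Baire set, so that the argument remains inside the Baire $\sigma$-algebra rather than merely the Borel one. An alternative that sidesteps the separation theorem is to fix $\alpha<\omega_1$ with $g\circ\varphi\in\C_\alpha(X)$ and invoke the per-class descent result for continuous surjections of $K$-analytic spaces \cite[Theorem~5.9.13]{ROJA} (see also \cite[Theorem~5.26]{book-irt}), which yields $g\in\C_\alpha(Y)$ directly; I prefer the formulation above, however, since it keeps the argument self-contained modulo the classical separation theorem.
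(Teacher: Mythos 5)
Your argument is correct and is essentially the paper's own proof: pull $g^{-1}(U)$ and its complement back to Baire (hence $K$-analytic) subsets of $X$, push them forward under the surjection to complementary $K$-analytic subsets of $Y$, and conclude from the separation theorem for $K$-analytic sets in \cite{ROJA} that they are Baire, exactly as the paper does (which invokes the \emph{proof} of \cite[Theorem~3.3.1]{ROJA} for Baire separation). One small caution: your parenthetical justification ``(being Borel)'' for the $K$-analyticity of Baire sets is not valid, since Borel subsets of a $K$-analytic space need not be $K$-analytic (open subsets of a compact space need not even be Lindel\"of); the correct fact, which both you and the paper need and which \cite{ROJA} provides, is specifically that \emph{Baire} subsets of $K$-analytic spaces are $K$-analytic.
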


\begin{proof}
If $g$ is a Baire function $Y$, then $g\circ\varphi$ is clearly a Baire function on $X$. Conversely, if $f=g\circ\varphi$ is a Baire function on $X$ and $U\subset \er$ is an open set, then both $f^{-1}(U)$ and $f^{-1}(\er\setminus U)$ are Baire sets in $X$. Then they are $K$-analytic sets in $X$ (see \cite[Section~2]{ROJA}), and thus 
\[
g^{-1}(U)=\varphi(f^{-1}(U)),\quad g^{-1}(\er\setminus U)=\varphi(f^{-1}(\er\setminus U))
\]
are $K$-analytic as well. It follows from the proof of the standard separation theorem (see \cite[Theorem~3.3.1]{ROJA}) that they are Baire sets.
Hence $g$ is measurable with respect to the $\sigma$-algebra of Baire sets, and thus it is a Baire function.
\end{proof}

\begin{lemma}\label{lind0.02}
Let $f\colon X\to \er$ be a strongly affine function on a compact convex set $X$ for which there exists a Baire set $B\supset \ext X$ such that $f\r_B$ is a Baire function. Then $f$ is a Baire function on $X$.
\end{lemma}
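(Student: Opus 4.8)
The plan is to realise $f$ as a Baire function by transporting it through the barycentric map and Lemma~\ref{lind0.01}. Write $r\colon\M^1(X)\to X$ for the continuous barycenter map and set
\[
M=\{\mu\in\M^1(X):\mu(B)=1\}=\{\mu\in\M^1(X):\mu(X\setminus B)=0\}.
\]
Since $B$ is a Baire set, $\chi_{X\setminus B}$ is a bounded Baire function on $X$, so by the $\C_\alpha$-part of Lemma~\ref{transM} the map $\mu\mapsto\mu(X\setminus B)$ is a Baire function on the compact space $\M^1(X)$; hence $M$, being the preimage of the closed set $\{0\}$, is a Baire (in particular $K$-analytic, as a Borel subset of a compact space) subset of $\M^1(X)$.

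Next I would check that $r(M)=X$. For each $x\in X$ choose a maximal measure $\mu_x\in\M^1(X)$ with barycenter $x$ (Choquet--Bishop--de Leeuw). A classical property of maximal measures is that they vanish on every Baire set disjoint from $\ext X$ (see e.g. \cite{alfsen} or \cite{book-irt}); since $B\supset\ext X$ is a Baire set, this forces $\mu_x(X\setminus B)=0$, i.e. $\mu_x\in M$. Thus $r\r_M\colon M\to X$ is a continuous surjection of the $K$-analytic space $M$ onto the $K$-analytic space $X$, and Lemma~\ref{lind0.01} reduces the problem to showing that $f\circ r\r_M$ is a Baire function on $M$.

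Finally, for $\mu\in M$ the strong affinity of $f$ together with the fact that $\mu$ is carried by $B$ gives
\[
f\bigl(r(\mu)\bigr)=\mu(f)=\mu(f\chi_B).
\]
If $h:=f\chi_B$ (the function equal to $f$ on $B$ and to $0$ off $B$) is a bounded Baire function on $X$, then $\mu\mapsto\mu(h)$ is a Baire function on $\M^1(X)$ by Lemma~\ref{transM}, and its restriction to $M$ coincides with $f\circ r\r_M$. Hence $f\circ r\r_M$ is Baire on $M$, and Lemma~\ref{lind0.01} yields that $f$ is a Baire function on $X$.

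The step demanding the most care is precisely the claim that $h=f\chi_B$ is Baire on $X$: one must upgrade the subspace-Baire function $f\r_B$ to a genuine $\Baire(X)$-measurable function. This rests on the fact that a Baire subset $B$ of a compact space is \emph{Baire-embedded}, i.e. $\Baire(B)=\{A\cap B:A\in\Baire(X)\}$, so that the ($B$-)Baire level sets of $f\r_B$ are traces of Baire sets of $X$; extending by $0$ off $B$ (a Baire set) then makes $h$ measurable with respect to $\Baire(X)$. I would establish the Baire-embedding first for a cozero set $B=\{g>0\}$, where each zero set of the subspace $B$ is a Baire set of $X$ because $B=\bigcup_n\{g\ge 1/n\}$ is an increasing union of compact zero sets on which the relevant continuous functions extend by Tietze's theorem, and then bootstrap to an arbitrary Baire set $B$ using the stability properties of this embedding under the countable operations generating $\Baire(X)$.
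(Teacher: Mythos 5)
Your argument has the same skeleton as the paper's proof: you form $M=\{\mu\in\M^1(X)\colon\mu(B)=1\}$, check it is a Baire subset of $\M^1(X)$ by applying Lemma~\ref{transM}(c) to $\chi_B$, obtain surjectivity of $r\r_M$ from the fact that maximal representing measures vanish on Baire sets disjoint from $\ext X$, identify $f\circ r$ on $M$ with $\mu\mapsto\mu(f)$ via strong affinity, and conclude with Lemma~\ref{lind0.01}. (One small slip: the $K$-analyticity of $M$ comes from its being a \emph{Baire} subset of a compact space, not a Borel one; Borel subsets of non-metrizable compacta need not be $K$-analytic, since an open subset of a compact space need not even be Lindel\"of.) Where you go beyond the paper is precisely the step it treats tersely: the paper invokes Lemma~\ref{transM}(c) to conclude that $\mu\mapsto\mu(f)$ is Baire on $M$, although that lemma concerns functions defined on the compact space $X$, whereas $f$ is only assumed Baire on the subspace $B$. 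Your reduction --- extend $f\r_B$ by zero to $h=f\chi_B$ and apply Lemma~\ref{transM}(c) to $h$ --- is a legitimate way to make this rigorous, and you correctly identify that it hinges on $B$ being Baire-embedded in $X$.

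However, your proposed proof of that embedding has a genuine gap. The bootstrap ``from cozero sets through the countable operations generating the Baire $\sigma$-algebra'' cannot work as stated: the property ``every zero set of the subspace $B$ is the trace of a Baire set of $X$'' does not pass through countable intersections or complements, because a continuous function on $\bigcap_n B_n$ (or on $X\setminus B$) bears no relation to continuous functions on the individual sets $B_n$ (or on $B$), so there is nothing to induct on; only the countable-union step survives, since a zero set of $\bigcup_n B_n$ traces a zero set on each $B_n$. The missing idea is to use a fact you already have in hand: $B$, being a Baire subset of a compact space, is $K$-analytic and hence Lindel\"of, and \emph{every Lindel\"of subspace of a Tychonoff space is z-embedded}. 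Indeed, if $Z=h^{-1}(0)$ is a zero set of $B$, then $B\setminus Z$ is an $F_\sigma$ in $B$, hence Lindel\"of; covering it by cozero sets of $X$ whose traces on $B$ miss $Z$ and extracting a countable subcover, one obtains a cozero set $U$ of $X$ with $U\cap B=B\setminus Z$, so that $Z=B\cap(X\setminus U)$. Consequently the family $\{A\subset B\colon A=C\cap B,\ C\text{ Baire in }X\}$ is a $\sigma$-algebra containing all zero sets of $B$, hence all Baire sets of $B$; since $B$ itself is Baire in $X$, the function $h=f\chi_B$ is measurable with respect to the Baire $\sigma$-algebra of $X$ and your argument closes. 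With this replacement your proof is correct --- and, at this particular point, more complete than the paper's own write-up, which passes over the extension issue in silence.
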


\begin{proof}
Let $B\supset \ext X$ and $f:X\to\er$ be as in the hypothesis. Let 
\[
\wt{B}=\{\mu\in\M^1(X)\colon \mu(B)=1\}.
\]
Since the characteristic function of $B$ is a Baire function, the function $\wt{c}(\mu)=\mu(B)$, $\mu\in\M^1(X)$, is a Baire function on $\M^1(X)$ as well, and thus $\wt{B}=\{\mu\in\M^1(X)\colon \wt{c}(\mu)=1\}$ is a Baire set in $\M^1(X)$. Hence $\wt{B}$ is a $K$-analytic space and it follows from Lemma~\ref{transM}(c) that the function $\wt{f}:\wt{B}\to \er$ defined as 
\[
\wt{f}(\mu)=\mu(f),\quad \mu\in\wt{B},
\]
is a Baire function on $\wt{B}$. 

Then $r:\wt{B}\to X$ is a continuous surjective mapping satisfying $\wt{f}=f\circ r$ (see \cite[Corollary~I.4.12 and the subsequent remark]{alfsen} or \cite[Theorem~3.79]{book-irt}). By Lemma~\ref{lind0.01}, $f$ is a Baire function.
\end{proof}

\begin{lemma}\label{lind0.020}
Let $X$ be a compact convex set with $\ext X$ \lin, $\mu\in\M^1(X)$ be maximal and $B\supset \ext X$ be $\mu$-measurable. Then $\mu(B)=1$.
\end{lemma}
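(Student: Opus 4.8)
The plan is to recast the conclusion $\mu(B)=1$ as $\mu(X\setminus B)=0$ and to exploit that $\mu$ is a Radon measure, hence inner regular with respect to compact sets on all $\mu$-measurable sets. Since $\ext X\subseteq B$, the $\mu$-measurable set $X\setminus B$ is disjoint from $\ext X$, and inner regularity yields
\[
\mu(X\setminus B)=\sup\{\mu(K)\colon K\subseteq X\setminus B,\ K\ \text{compact}\}.
\]
Every competing $K$ is compact and disjoint from $\ext X$. Thus the entire statement reduces to the single claim that \emph{a maximal measure assigns measure zero to every compact set $K$ with $K\cap\ext X=\emptyset$}, which is what I would establish next.

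To handle such a $K$, I would separate it from $\ext X$ by a Baire set, and this is precisely where the Lindel\"of hypothesis enters. For each $x\in\ext X$, since $X$ is compact Hausdorff (hence normal) and $x\notin K$, Urysohn's lemma provides a continuous $g_x\colon X\to[0,1]$ with $g_x(x)=0$ and $g_x\equiv1$ on $K$; then $V_x=\{g_x<\tfrac12\}$ is a cozero neighbourhood of $x$ disjoint from $K$. The family $\{V_x\colon x\in\ext X\}$ covers $\ext X$, so by the Lindel\"of property I can extract a countable subcover $\{V_{x_n}\}_{n\in\en}$. Its union $V=\bigcup_{n}V_{x_n}$ is again a cozero set, it contains $\ext X$, and it is disjoint from $K$. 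Consequently $Z=X\setminus V$ is a zero set, in particular a Baire set, satisfying $K\subseteq Z$ and $Z\cap\ext X=\emptyset$.

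Finally I would invoke the classical description of maximal measures as boundary measures: a maximal measure vanishes on every Baire set disjoint from $\ext X$ (see \cite[Chapter~14]{PHE} or \cite{book-irt}). Applying this to the Baire set $Z$ gives $\mu(Z)=0$, whence $\mu(K)\le\mu(Z)=0$. Feeding this back into the supremum shows $\mu(X\setminus B)=0$, i.e.\ $\mu(B)=1$. The only genuinely delicate point is that $B$ is merely $\mu$-measurable, so the boundary-measure theorem cannot be applied to $B$ directly; the role of the Lindel\"of assumption is exactly to let me trap an arbitrary compact subset of $X\setminus B$ inside a genuine (zero) Baire set that still avoids $\ext X$. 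The countable subcover is indispensable here, since the uncountable union $\bigcup_{x}V_x$ need not be Baire, while a single $V_x$ around one extreme point cannot be expected to avoid $K$ as a whole.
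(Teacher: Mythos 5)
Your proof is correct and follows essentially the same route as the paper: reduce via inner regularity to showing $\mu(K)=0$ for compact $K$ disjoint from $\ext X$, cover $\ext X$ by neighbourhoods missing $K$, extract a countable subcover by the Lindel\"of property, and invoke the pseudo-support property of maximal measures. The only (cosmetic) difference is that the paper covers $\ext X$ by \emph{closed} neighbourhoods and cites the extension of \cite[Corollary~I.4.12]{alfsen} to countable unions of closed sets, whereas you use cozero neighbourhoods so that the complement is a genuine zero (Baire) set and the basic Baire-set form of that corollary applies directly.
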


\begin{proof}
Given $B\supset \ext X$ and maximal measure $\mu\in\M^1(X)$, by the regularity of $\mu$ it is enough to show that $\mu(K)=0$ for every $K\subset X\setminus B$ compact. Given such a set $K$, for every $x\in \ext X$ we select a closed neighborhood $U_x$ of $x$ disjoint from $K$. By the \lin\ property we choose a countable set $\{x_n\colon n\in\en\}\subset \ext X$ with $\ext X\subset \bigcup U_{x_n}$. By Corollary~I.4.12 and the subsequent remark in \cite{alfsen} (see also \cite[Theorem~3.79]{book-irt}), $\mu(\bigcup U_{x_n})=1$. Hence $\mu(K)=0$, which concludes the proof.
\end{proof}

\begin{lemma}\label{lin-appr}
Let $X$ be a compact convex set with $\ext X$ \lin\ and $f\in\C^b(\ext X)$. Then there exist a decreasing sequence $(u_n)$ of continuous concave functions on $X$ and an increasing sequence $(l_n)$ of continuous convex functions on $X$ such that 
\[
\inf f(\ext X)\leq \inf l_1(X),\quad \sup u_1(X)\leq \sup f(\ext X),
\]
and 
\[
u_n\searrow f,\ l_n\nearrow f\text{ on }\ext X.
\]
\end{lemma}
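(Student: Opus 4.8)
The plan is to approximate $f$ from above by continuous concave functions and from below by continuous convex functions, exploiting the \lin\ property of $\ext X$ together with the standard tools of Choquet theory. The key underlying fact is that on a compact convex set $X$, the upper envelope of a continuous function is concave and upper semicontinuous, while the lower envelope is convex and lower semicontinuous; moreover continuous concave functions can be obtained as decreasing limits (or infima) of continuous affine functions plus constants. My strategy is therefore to first extend and manipulate $f$ on $\ext X$ to produce the desired envelope-type functions on all of $X$, and then to refine them into monotone sequences converging to $f$ precisely on $\ext X$.

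First I would set $M=\sup f(\ext X)$ and $m=\inf f(\ext X)$, which are finite since $f\in\C^b(\ext X)$. For each point $x\in\ext X$ and each $\ep>0$, continuity of $f$ lets me choose a relatively open neighborhood on which $f$ oscillates by less than $\ep$; combined with the \lin\ property, this yields a \emph{countable} cover of $\ext X$ by such neighborhoods. This countability is the crucial payoff of the \lin\ hypothesis and is exactly the mechanism used in Lemma~\ref{lind0.020}: it reduces uncountable control on $\ext X$ to a countable collection, which is what makes sequences (rather than nets) of continuous functions adequate. Using these neighborhoods I would manufacture, for each $n$, a continuous function on $X$ that dominates $f$ on the relevant piece of $\ext X$ up to $\frac1n$, and then pass to its concave upper envelope to get genuine concavity while retaining the bound $\sup u_1(X)\le M$.

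The core construction step is then to realize $u_n$ as continuous concave functions with $u_n\searrow f$ on $\ext X$, and symmetrically $l_n$ as continuous convex functions with $l_n\nearrow f$ on $\ext X$, with the endpoint inequalities $\inf l_1(X)\ge m$ and $\sup u_1(X)\le M$ arranged by truncating at the levels $m$ and $M$ (which preserves convexity/concavity and continuity since truncation with a constant does so). To secure monotonicity I would build the sequence inductively, at stage $n$ taking the minimum of the previously constructed $u_{n-1}$ with a newly built continuous concave majorant that is within $\frac1n$ of $f$ on $\ext X$; the minimum of two continuous concave functions is continuous but need not be concave, so here I would instead replace it by the concave envelope, or argue directly that the sequence of concave envelopes of $\min$-data is monotone and converges. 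The convergence $u_n\searrow f$ on $\ext X$ follows because at every extreme point the value can be pinched to within $\frac1n$ from above.

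The main obstacle I expect is precisely maintaining \emph{simultaneously} the three requirements: continuity, the correct concavity/convexity, and the exact pointwise monotone convergence to $f$ \emph{on $\ext X$} (and not merely on a dense subset or in a weaker sense), all while respecting the prescribed one-sided bounds at $n=1$. The difficulty is that taking infima or envelopes to enforce concavity tends to destroy the sharp pointwise convergence, and conversely pinning down the limit tends to break concavity. The \lin\ property is what lets me resolve this tension by working with countable covers, so that a diagonal/inductive argument over $n\in\en$ controls all extreme points at once; I anticipate that the delicate bookkeeping lies in verifying that the concave (resp.\ convex) envelope of the approximants still satisfies the desired inequality at the extreme points, for which I would invoke the fact that on $\ext X$ the concave envelope of a continuous function agrees with the function itself up to the approximation error, a standard consequence of the representation of envelopes via maximal measures together with Lemma~\ref{lind0.020}.
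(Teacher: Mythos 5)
Your plan hinges on the step ``pass to its concave upper envelope to get genuine concavity,'' and that is exactly where it breaks. On a general compact convex set the concave (upper) envelope of a continuous function is concave and upper semicontinuous, but it need \emph{not} be continuous (continuity of envelopes of continuous functions is a strong regularity property of $X$, failing in general when $\ext X$ is not closed); since the lemma demands \emph{continuous} concave $u_n$, an envelope cannot be the final operation in your construction. Moreover, the difficulty that pushed you toward envelopes is illusory: the pointwise minimum of two concave functions \emph{is} concave (it is the maximum of concave functions that can fail to be concave), so no re-concavification is needed once one has a supply of continuous affine majorants. There is also an earlier unproved step: you ``manufacture a continuous function on $X$ that dominates $f$ on $\ext X$ up to $\frac1n$,'' but $f$ is defined only on the generally non-closed set $\ext X$, and producing globally defined continuous affine majorants whose values at each extreme point pinch down to $f$ is precisely the nontrivial Choquet-theoretic content of the lemma; a countable cover of $\ext X$ by small-oscillation neighborhoods does not by itself yield such functions. (The pinching fact you do cite holds at extreme points because $\M_x(X)=\{\ep_x\}$ for $x\in\ext X$ together with the measure-theoretic description of envelopes, not via Lemma~\ref{lind0.020}.)

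The paper's proof supplies exactly these missing ingredients and avoids envelopes altogether. It extends $f$ to $\ov{\ext X}$ by upper limits, obtaining an upper semicontinuous function $h$; by \cite[Proposition~I.4.1]{alfsen} the function $h^*=\inf\{a\in\fra^c(X)\colon a\ge f\text{ on }\ext X\}$ satisfies $h^*=f$ on $\ext X$, so $f$ is, pointwise on $\ext X$, the infimum of the family of \emph{continuous affine} majorants. The \lin\ property of $\ext X$ then enters not through the mechanism of Lemma~\ref{lind0.020} but through Jellett's countable-reduction lemma (\cite[Lemma]{Jel}, \cite[Lemma~A.54]{book-irt}): one extracts a countable subfamily $\{h_n\colon n\in\en\}\subset\fra^c(X)$ with $f=\inf_n h_n$ on $\ext X$. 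Setting $u_n=s\wedge h_1\wedge\cdots\wedge h_n$, where $s=\sup f(\ext X)$, produces functions that are continuous and concave (finite minima of continuous affine functions and a constant), automatically decreasing, bounded above by $s$, and convergent to $f$ on $\ext X$; the increasing convex sequence $(l_n)$ is obtained dually. Your inductive envelope scheme cannot be repaired without essentially reproducing this argument.
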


\begin{proof}
Without loss of generality we may assume that 
\[
0\leq i=\inf f(X)\leq \sup f(X)=s\leq 1\quad\text{on }\ext X.
\]
We construct a decreasing sequence $(u_n)$ of continuous concave functions on $X$ with values in $[0,1]$ such that $u_n\searrow f$ on $\ext X$.
To achieve this, we define $h\colon \ov{\ext X}\to [0,1]$ as
\[
h(x)=\begin{cases} f(x),& x\in\ext X,\\
                        \limsup_{y\to x, y\in\ext X} f(y),& x\in\ov{\ext X}\setminus \ext X.
           \end{cases}             
\]
Then $h$ is upper semicontinuous on $\ov{\ext X}$ and the function
\[
h^*=\inf\{a\in\fra^c(X)\colon a\geq f\text{ on }\ext X\}
\]
satisfies $h=h^*=f$ on $\ext X$ by \cite[Proposition~I.4.1]{alfsen} (see also \cite[Theorem~3.24]{book-irt}).
Hence 
\[
f=\inf\{a\in\fra^c(X)\colon a\geq f\text{ on }\ext X\}\quad\text{ on }\ext X.
\]
Since $\ext X$ is a \lin\ space, there exists a countable family $\H=\{h_n\colon n\in\en\}$ of functions in $\fra^c(X)$ majorizing $f$ on $\ext X$ such that $f=\inf \H$ on $\ext X$ (see \cite[Lemma]{Jel} or \cite[Lemma~A.54]{book-irt}).
Then we obtain the desired sequence by setting 
\[
u_1=s\wedge h_1,\ u_2=s\wedge h_1\wedge\cdots\wedge h_n,\ \dots, \quad n\in\en.
\]
Analogously we obtain an increasing sequence $(l_n)$ of convex continuous functions converging to $f$ on $\ext X$.
\end{proof}

\begin{lemma}\label{lind0.021}
Let $X$ be a compact convex set with $\ext X$ \lin\ and let $f\in\C_\alpha(\ext X)$ have values in $[0,1]$. Then there exist a Baire set $B\supset \ext X$ and a function $g\in\C_\alpha(B)$ such that 
\begin{itemize}
\item $g=f$ on $\ext X$,
\item $0\leq g\leq 1$ on $B$, and 
\item $g(r(\mu))=\mu(g)$ for any $\mu\in\M^1(X)$ satisfying $\mu(B)=1$ and $r(\mu)\in B$.
\end{itemize}
\end{lemma}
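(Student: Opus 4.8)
The plan is to argue by transfinite induction on $\alpha$, the substance lying in the base case $\alpha=0$, with the successor and limit stages handled uniformly by passing to pointwise limits. For $\alpha=0$ the function $f$ is a bounded continuous function on $\ext X$ (it has values in $[0,1]$), so Lemma~\ref{lin-appr} supplies a decreasing sequence $(u_n)$ of continuous concave functions and an increasing sequence $(l_n)$ of continuous convex functions on $X$ with $u_n\searrow f$ and $l_n\nearrow f$ on $\ext X$, together with the stated bounds forcing all values into $[0,1]$. I would set $u=\lim_n u_n$ and $l=\lim_n l_n$; then $u$ is upper semicontinuous, $l$ is lower semicontinuous, both are Baire (class one) functions on $X$ with values in $[0,1]$, and the set $B=\{x\in X: u(x)=l(x)\}$ is a Baire set containing $\ext X$, since $u=l=f$ there. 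The common value $g:=u|_B=l|_B$ is then simultaneously upper and lower semicontinuous on $B$, hence continuous, and has values in $[0,1]$.

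To verify the barycentric identity in the base case, fix $\mu\in\M^1(X)$ with $\mu(B)=1$ and $r(\mu)\in B$. Concavity of each $u_n$ and convexity of each $l_n$ give $u_n(r(\mu))\geq\mu(u_n)$ and $l_n(r(\mu))\leq\mu(l_n)$; letting $n\to\infty$ and using bounded monotone convergence yields $u(r(\mu))\geq\mu(u)$ and $l(r(\mu))\leq\mu(l)$. Because $\mu(B)=1$ and $u=l=g$ on $B$, we have $\mu(u)=\mu(l)=\mu(g)$, while $r(\mu)\in B$ gives $u(r(\mu))=l(r(\mu))=g(r(\mu))$. Chaining these relations,
\[
g(r(\mu))=u(r(\mu))\geq\mu(u)=\mu(g)=\mu(l)\geq l(r(\mu))=g(r(\mu)),
\]
so equality holds throughout and $g(r(\mu))=\mu(g)$, as required.

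For the inductive step with $\alpha\geq 1$, I would write $f=\lim_n f_n$ pointwise on $\ext X$ with $f_n\in\C_{\beta_n}(\ext X)$ for some $\beta_n<\alpha$. Truncating each $f_n$ into $[0,1]$ preserves its Baire class and, since $f$ itself lies in $[0,1]$, does not disturb the convergence, so I may assume each $f_n$ has values in $[0,1]$. The induction hypothesis then provides Baire sets $B_n\supset\ext X$ and functions $g_n\in\C_{\beta_n}(B_n)$ with values in $[0,1]$, agreeing with $f_n$ on $\ext X$ and satisfying the barycentric identity on $B_n$. Setting $B'=\bigcap_n B_n$ and letting $B$ be the subset of $B'$ on which $(g_n)$ converges (a Baire set, being described by countably many Baire operations on the $g_n$), one has $\ext X\subset B$ because $g_n=f_n\to f$ on $\ext X$; the function $g:=\lim_n g_n$ then belongs to $\C_\alpha(B)$ (a pointwise limit of functions in $\bigcup_{\beta<\alpha}\C_\beta(B)$), equals $f$ on $\ext X$, and has values in $[0,1]$.

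Finally, for $\mu\in\M^1(X)$ with $\mu(B)=1$ and $r(\mu)\in B$, monotonicity forces $\mu(B_n)=1$ and $r(\mu)\in B\subset B_n$, so the induction hypothesis gives $g_n(r(\mu))=\mu(g_n)$ for every $n$; since $g_n\to g$ pointwise $\mu$-almost everywhere (on $B$) with $|g_n|\leq 1$, the dominated convergence theorem yields $\mu(g_n)\to\mu(g)$, while $r(\mu)\in B$ gives $g_n(r(\mu))\to g(r(\mu))$, whence $g(r(\mu))=\mu(g)$. The main obstacle is the base case: it is there that the \lin\ hypothesis genuinely enters, through Lemma~\ref{lin-appr}, and where one must manufacture the Baire set $B$ and the continuous function $g$ so that the barycentric formula survives the squeeze between the semicontinuous envelopes $u$ and $l$. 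Once this is in place the higher classes require nothing more than stability of Baire functions under pointwise limits and an application of dominated convergence.
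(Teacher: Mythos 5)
Your proposal is correct and follows essentially the same route as the paper: the base case uses Lemma~\ref{lin-appr} to produce the envelopes $u=\inf_n u_n$ and $l=\sup_n l_n$, sets $B=\{u=l\}$ with $g$ the common value, and squeezes the barycentric identity between the concave/convex inequalities, while the inductive step intersects the sets $B_n$, restricts to the convergence set, and closes with dominated convergence. The only cosmetic differences are that you derive the inequalities $u(r(\mu))\geq\mu(u)$ and $l(r(\mu))\leq\mu(l)$ from the approximating sequences by monotone convergence, where the paper invokes them directly for the semicontinuous envelopes, and you omit the (inessential for the argument) remark that $l\leq u$ on all of $X$ via the minimum principle.
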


\begin{proof}
We proceed by transfinite induction on the class of a function $f$.

Assume first that $f$ is continuous on $\ext X$.
Using Lemma~\ref{lin-appr} we find relevant sequences $(u_n)$ and $(l_n)$, and define $u=\inf_{n\in\en} u_n$, $l=\sup_{n\in\en} l_n$. Then  we observe that $l\leq u$ by the minimum principle (see \cite[Theorem~I.5.3]{alfsen} or \cite[Theorem~3.16]{book-irt}, both functions are Baire, $u$ is upper semicontinuous concave and $l$ is lower semicontinuous convex. Let
\[
B=\{x\in X\colon u(x)=l(x)\}\quad\text{and}\qquad g(x)=u(x),\quad x\in B.
\]
Then $B$ is a Baire set containing $\ext X$ and, for $x\in B$ and $\mu\in\M_x(X)$ with $\mu(B)=1$, we have
\[
g(x)=u(x)\geq \mu(u)=\mu(l)\geq l(x)=g(x).
\]
Since $g$ is continuous on $B$, the proof is finished for the case $\alpha=0$.

Assume now that the claim holds true for all $\beta$ smaller then some countable ordinal $\alpha$. Given $f\in\C_\alpha(\ext X)$ with values in $[0,1]$, let $(f_n)$ be a sequence of functions with $f_n\in \C_{\alpha_n}(\ext X)$ for some $\alpha_n<\alpha$, $n\in\en$, such that $f_n\to f$. Without loss of generality we may assume that all functions $f_n$ have values in $[0,1]$. For each $n\in\en$, we use the induction hypothesis and find a Baire set $B_n\supset \ext X$ along with a function $g_n\in\C_{\alpha_n}(B_n)$ with values in $[0,1]$ that coincides with $f_n$ on $\ext X$ and satisfies $g_n(r(\mu))=\mu(g_n)$ for any $\mu\in\M^1(X)$ satisfying $\mu(B_n)=1$ and $r(\mu)\in B_n$.

We set
\[
B=\{x\in \bigcap_{n=1}^\infty B_n\colon (g_n(x))\text{ converges}\}\quad\text{and}\quad g(x)=\lim_{n\to\infty} g_n(x),\ x\in B.
\]
Then $B$ is  Baire set containing $\ext X$, $g\in\C_\alpha(B)$ with values in $[0,1]$, 
\[
g_n(x)=f_n(x)\to f(x)\quad\text{for every }x\in\ext X,
\]
and, for $x\in B$ and $\mu\in\M_x(X)$ with $\mu(B)=1$, 
\[
g(x)=\lim_{n\to \infty} g_n(x)=\lim_{n\to \infty} \mu(g_n)=\mu(g).
\]
This finishes the proof.
\end{proof}

\begin{lemma}\label{g=f}
Let $X$ be a compact convex set with $\ext X$ \lin\ and let $f\colon X\to\er$ be a strongly affine function such that $f\r_{\ext X}\in\C_\alpha(\ext X)$. Then there exists a Baire set $B\supset \ext X$ such that $f\in\C_\alpha(B)$.
\end{lemma}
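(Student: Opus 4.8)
The plan is to deduce this from Lemma~\ref{lind0.021} by showing that the auxiliary function $g$ produced there in fact coincides with $f$ on the whole Baire set $B$, not merely on $\ext X$. Since $f$ is strongly affine it is bounded, so after the affine change of variable $f\mapsto (f-a)/(b-a)$ (which preserves strong affinity and the class $\C_\alpha$ and is harmless for the conclusion) I may assume $0\le f\le 1$ on $X$. Then $f\r_{\ext X}\in\C_\alpha(\ext X)$ has values in $[0,1]$, and Lemma~\ref{lind0.021} supplies a Baire set $B\supset\ext X$ and a function $g\in\C_\alpha(B)$ with $0\le g\le 1$, with $g=f$ on $\ext X$, and with the barycentric identity $g(r(\mu))=\mu(g)$ for every $\mu\in\M^1(X)$ satisfying $\mu(B)=1$ and $r(\mu)\in B$. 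It then suffices to prove $f=g$ on $B$, since that gives $f\in\C_\alpha(B)$ at once.

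To upgrade $f=g$ from $\ext X$ to all of $B$, fix $x\in B$ and choose a maximal measure $\mu\in\M_x(X)$ representing $x$. As $B$ is Baire it is $\mu$-measurable, so Lemma~\ref{lind0.020} yields $\mu(B)=1$; hence $\mu(f)=\int_B f\,d\mu$, and since $r(\mu)=x\in B$ the barycentric identity for $g$ gives $g(x)=\mu(g)=\int_B g\,d\mu$. Consider the set $N=\{y\in B: f(y)\neq g(y)\}$. Both $f$ (strongly affine, hence universally measurable) and $g$ (Baire) are universally measurable on $B$, so $N$ is universally measurable, and $N\cap\ext X=\emptyset$ because $f=g$ on $\ext X$.

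The decisive step — and the point where the \lin\ hypothesis is used — is to show $\mu(N)=0$. By inner regularity of $\mu$ it is enough to prove $\mu(K)=0$ for every compact $K\subset N$. Each such $K$ is disjoint from $\ext X$, so, exactly as in the proof of Lemma~\ref{lind0.020}, for every $x'\in\ext X$ I pick a closed neighbourhood $U_{x'}$ disjoint from the closed set $K$, extract a countable subcover $\{U_{x'_n}\}$ of $\ext X$ by the \lin\ property, and use the maximality of $\mu$ (via \cite[Corollary~I.4.12]{alfsen} or \cite[Theorem~3.79]{book-irt}) to get $\mu(\bigcup_n U_{x'_n})=1$, whence $\mu(K)=0$. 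Therefore $\mu(N)=0$, and consequently
\[
f(x)=\mu(f)=\int_B f\,d\mu=\int_B g\,d\mu=\mu(g)=g(x).
\]
As $x\in B$ was arbitrary, $f=g$ on $B$, so $f\in\C_\alpha(B)$.

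I expect the main obstacle to be precisely the vanishing $\mu(N)=0$: one has to recognise that, under the \lin\ assumption, a maximal measure is pseudo-supported by $\ext X$ strongly enough to annihilate \emph{every} universally measurable set disjoint from $\ext X$, and not merely the complements of Baire supersets of $\ext X$ as in the literal statement of Lemma~\ref{lind0.020} (the set $N$ need not be Baire). This forces the argument to be carried out at the level of compact subsets. The remaining steps are routine bookkeeping with Lemma~\ref{lind0.021} and the two barycentric identities.
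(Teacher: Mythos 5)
Your proof is correct and follows essentially the same route as the paper: apply Lemma~\ref{lind0.021}, then prove $f=g$ on $B$ by testing against a maximal representing measure and using strong affinity together with the barycentric identity for $g$. The one difference is that your ``decisive step'' is superfluous: Lemma~\ref{lind0.020} only requires the superset of $\ext X$ to be \emph{$\mu$-measurable}, not Baire, so the paper applies it directly to the universally measurable set $\{y\in X\colon f(y)=g(y)\}\supset\ext X$ to get $f=g$ $\mu$-almost everywhere --- which is precisely the fact you re-prove by hand via inner regularity, closed neighbourhoods and the Lindel\"of property.
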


\begin{proof}
Given a function $f$ as in the hypothesis, we assume without loss of generality that $0\leq f\leq 1$.
Using Lemma~\ref{lind0.021} we find a Baire set $B\supset \ext X$ together with a function $g\in\C_\alpha(B)$ with values in $[0,1]$ such that $g=f$ on $\ext X$ and $g(x)=\mu(g)$ for each $x\in B$ and $\mu\in\M_x(X)$ with $\mu(B)=1$.

We claim that $f=g$ on $B$. To verify this, pick $x\in B$ and a maximal measure $\mu\in\M_x(X)$. Then $\mu$ is supported by $B$ and $f=g$ $\mu$-almost everywhere. (Indeed, the set $\{y\in X\colon f(y)=g(y)\}$ is $\mu$-measurable and contains $\ext X$. The assertion thus follows from Lemma~\ref{lind0.020}.) Hence 
\[
g(x)=\mu(g)=\mu(f)=f(x),
\]
where the last equality follows from the strong affinity of $f$. 
This concludes the proof.
\end{proof}

\begin{prop}\label{lind0.03}
Let $X$ be a compact convex set with $\ext X$ Lindel\"of and let $f\colon X\to\er$ be a strongly affine function such that $f\r_{\ext X}$ is Baire.
Then $f$ is a Baire function on $X$.
\end{prop}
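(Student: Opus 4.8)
The plan is to chain together the two immediately preceding lemmas, which between them already do all the real work. First I would unwind the hypothesis: saying that $f\r_{\ext X}$ is Baire means precisely that $f\r_{\ext X}\in\C_\alpha(\ext X)$ for some countable ordinal $\alpha$, since by definition the family of Baire functions on $\ext X$ is the union $\bigcup_{\alpha<\omega_1}\C_\alpha(\ext X)$. So I would fix such an $\alpha$ and work with a concrete Baire class.

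The first substantive step is to transfer the descriptive information off $\ext X$ (which is only \lin, and hence need not be a nicely measurable subset of $X$) onto a genuine Baire subset of $X$. This is exactly the content of Lemma~\ref{g=f}: using that $\ext X$ is \lin\ together with the strong affinity of $f$, one obtains a Baire set $B\supset\ext X$ with $f\r_B\in\C_\alpha(B)$. In particular $f\r_B$ is a Baire function on $B$.

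The second and final step is to lift this from $B$ up to all of $X$, and for this I would invoke Lemma~\ref{lind0.02}, whose hypotheses are now exactly in place: $f$ is strongly affine on $X$, and we have produced a Baire set $B\supset\ext X$ on which $f$ restricts to a Baire function. That lemma then delivers directly the conclusion that $f$ is Baire on $X$, finishing the proof.

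Since essentially all the content has been pushed into the preparatory lemmas, there is no genuine obstacle remaining at this level; the only thing to check is that the hypotheses line up, namely that $\ext X$ is \lin\ (needed to apply Lemma~\ref{g=f}) and that $f$ is strongly affine (needed both to build $B$ and to apply Lemma~\ref{lind0.02}), and both are assumed. The truly hard work was done earlier and I would simply cite it: the barycentric push-forward argument in Lemma~\ref{lind0.02}, where a Baire function on $\wt B$ is transported along the continuous surjection $r:\wt B\to X$ via the $K$-analytic separation theorem behind Lemma~\ref{lind0.01}, and the transfinite induction constructing the sandwiching Baire functions $l\le g\le u$ in Lemma~\ref{lind0.021}.
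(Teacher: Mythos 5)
Your proof is correct and is exactly the paper's own argument: the paper's proof of Proposition~\ref{lind0.03} consists precisely of citing Lemmas~\ref{g=f} and~\ref{lind0.02} in the order you chain them. Your unwinding of ``Baire'' as membership in some $\C_\alpha(\ext X)$, $\alpha<\omega_1$, and your check that the hypotheses (Lindel\"of $\ext X$, strong affinity) match those of the two lemmas are exactly what is needed.
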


\begin{proof}
The assertion follows from Lemmas~\ref{g=f} and~\ref{lind0.02}.
\end{proof}

\section{Transfer of descriptive properties on compact convex sets with $\ext X$ being \lin}
\label{s:transf-lind}

The notions in this section are considered with respect to real numbers. The following key factorization result uses a method of a metrizable reduction available for Baire functions that can be found e.g. in \cite{capon}, \cite[Theorem~5.9.13]{ROJA}, \cite[Theorem~1]{tele2}, \cite{batty2} or \cite[Theorem~9.12]{book-irt}. The main results of Theorem~\ref{lind2} are then consequences of a selection theorem by M.~Talagrand (see \cite{talagr4}).

\begin{lemma}\label{lin-reduk}
Let $X$ be a compact convex set with $\ext X$ Lindel\"of and let $f:X\to\er$ be strongly affine such that $f\r_{\ext X}\in \C_\alpha(\ext X)$ for some $\alpha\in [1,\omega_1)$. Then there exist a metrizable compact convex set $Y$, an affine surjection $\varphi:X\to Y$, a strongly affine Baire function $\wt{f}:Y\to \er$ and $\wt{g}\in\C_\alpha^b(\ext Y)$ such that 
\[
\wt{g}(\varphi(x))=f(x),\quad x\in \ext X\cap \varphi^{-1}(\ext Y),
\]
and
\[
f(x)=\wt{f}(\varphi(x)),\quad x\in X.
\]
\end{lemma}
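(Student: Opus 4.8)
The plan is to build a single affine metrizable quotient of $X$ that simultaneously resolves the global Baire measurability of $f$ and the $\C_\alpha$-structure of $f$ near $\ext X$. Normalizing, we may assume $0\leq f\leq 1$ (legitimate, since strongly affine functions are bounded and an affine change of variable transforms the conclusion accordingly). Since $f$ is strongly affine and $f\r_{\ext X}\in\C_\alpha(\ext X)$ is Baire, Proposition~\ref{lind0.03} shows $f$ is a Baire function on all of $X$. The subalgebra of $\C(X)$ generated by $\fra^c(X)$ separates points and hence is dense by the Stone--Weierstrass theorem, so $\sigma(\fra^c(X))$ is the Baire $\sigma$-algebra of $X$; writing $f$ as built from countably many continuous functions, each uniformly approximable by polynomials in finitely many continuous affine functions, we obtain a countable family $\D_0\subset\fra^c(X)$ with $f$ being $\sigma(\D_0)$-measurable. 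Applying Lemma~\ref{g=f} (and inspecting its proof through Lemmas~\ref{lind0.021} and~\ref{lin-appr}) we then obtain a Baire set $B\supset\ext X$ with $f\r_B\in\C_\alpha(B)$, where both $B$ and the witnessing $\C_\alpha$-construction of $f\r_B$ are assembled, via $\wedge$, $\vee$ and pointwise limits, from a countable family $\D_1\subset\fra^c(X)$.

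Set $\D=\D_0\cup\D_1=\{d_n\colon n\in\en\}\subset\fra^c(X)$ and define the continuous affine map
\[
\varphi\colon X\to\prod_{n\in\en}\bigl[\inf\nolimits_X d_n,\ \sup\nolimits_X d_n\bigr],\qquad \varphi(x)=(d_n(x))_{n\in\en}.
\]
Then $Y=\varphi(X)$ is a metrizable compact convex set and $\varphi\colon X\to Y$ is an affine continuous surjection. Because $f$ is $\sigma(\D)$-measurable it is constant on the fibres of $\varphi$, so $f=\wt{f}\circ\varphi$ for a Baire function $\wt{f}\colon Y\to\er$; this is exactly the factorization underlying the metrizable reduction of \cite{capon}, \cite[Theorem~5.9.13]{ROJA} or \cite[Theorem~9.12]{book-irt}. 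To see that $\wt{f}$ is strongly affine, take $\nu\in\M^1(Y)$ with barycenter $y$ and lift it to $\mu\in\M^1(X)$ whose $\varphi$-image is $\nu$; since $\varphi$ is affine and continuous, $\varphi(r(\mu))=r(\nu)=y$, and therefore
\[
\nu(\wt{f})=\mu(\wt{f}\circ\varphi)=\mu(f)=f(r(\mu))=\wt{f}(\varphi(r(\mu)))=\wt{f}(y).
\]
This establishes the second identity $f=\wt{f}\circ\varphi$ on $X$.

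For the extreme-point part the same family $\D$ carries the construction of $B$ and of $f\r_B$: there are a Baire set $\wt{B}\subset Y$ with $B=\varphi^{-1}(\wt{B})$ and a function $\wt{g}_0\in\C_\alpha(\wt{B})$, with values in $[0,1]$, such that $f\r_B=\wt{g}_0\circ\varphi$. Since $B\supset\ext X$ and $\ext Y\subset\varphi(\ext X)$ (the $\varphi$-preimage of an extreme point of $Y$ is a closed face of $X$, hence contains an extreme point of $X$), we get $\ext Y\subset\varphi(\ext X)\subset\wt{B}$. Put $\wt{g}=\wt{g}_0\r_{\ext Y}$; then $\wt{g}\in\C_\alpha^b(\ext Y)$, because restriction to a subspace preserves the class $\C_\alpha$ and $0\leq\wt{g}\leq 1$. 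Finally, if $x\in\ext X\cap\varphi^{-1}(\ext Y)$, then $x\in B$ and $\varphi(x)\in\ext Y\subset\wt{B}$, so that $\wt{g}(\varphi(x))=\wt{g}_0(\varphi(x))=f(x)$, which is the first identity.

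The delicate point, and the one demanding the full strength of the metrizable reduction method, is the claim that $\wt{g}_0$ lies in the exact class $\C_\alpha(\wt{B})$ rather than in some higher Baire class. This is precisely what forces $\D$ to contain \emph{all} the continuous affine functions entering the transfinite construction of $f\r_B$ in Lemma~\ref{lind0.021}, so that the factorization through $\varphi$ respects the defining class at every stage of the induction. Once this bookkeeping is arranged, the two factorizations share the single affine quotient $\varphi\colon X\to Y$, and the proof is complete.
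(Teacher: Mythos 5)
Your proposal follows the same overarching device as the paper---a metrizable affine quotient $\varphi\colon X\to Y$ built from a countable family of continuous affine functions, together with the fact that $\ext Y\subset\varphi(\ext X)$ because fibres over extreme points are closed faces---but the execution differs at each of the three conclusions. For the factorization $f=\wt{f}\circ\varphi$, the paper lifts an explicit countable witness family (continuous functions approximated, via \cite[Proposition~I.1.1]{alfsen}, by sums of finite infima and suprema of elements of $\fra^c(X)$) and runs a transfinite induction; you instead use Stone--Weierstrass to see that $f$ is measurable with respect to the $\sigma$-algebra generated by countably many elements of $\fra^c(X)$ and factor by a Doob--Dynkin argument, using that Borel equals Baire on the compact metrizable $Y$. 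This is cleaner, and legitimate precisely because the lemma demands no control on the Baire class of $\wt{f}$. Your direct verification of strong affinity of $\wt{f}$ (lifting $\nu\in\M^1(Y)$ to $\mu$ with $\varphi_\sharp\mu=\nu$ and using $\varphi(r(\mu))=r(\nu)$) replaces the paper's citation of \cite[Proposition~3.2]{spurny-repre} and is correct. For the extreme-point part, the paper lifts the monotone affine approximations of Lemma~\ref{lin-appr} to $\ext Y$ and performs the transfinite induction there; you instead push the extension construction of Lemmas~\ref{lind0.021} and~\ref{g=f} down through $\varphi$ and restrict to $\ext Y$.

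The one place where your argument is an assertion rather than a proof is exactly the step you flag as ``bookkeeping'': that the pair $(B,f\r_B)$ produced by Lemma~\ref{g=f} is of the form $(\varphi^{-1}(\wt{B}),\wt{g}_0\circ\varphi)$ with $\wt{g}_0$ in the \emph{exact} class $\C_\alpha(\wt{B})$. You cannot use Lemma~\ref{lind0.021} as a black box here, since its statement does not record that its output is assembled from countably many elements of $\fra^c(X)$, nor that the assembly commutes with $\varphi$; a complete write-up must reopen its proof and run a transfinite induction parallel to the one the paper writes out. That induction does go through: in the base case, $B=\{u=l\}$ where $u$ and $l$ are countable infima/suprema of finite lattice combinations of elements of the family $\H\subset\fra^c(X)$ from Lemma~\ref{lin-appr}, so that $B=\varphi^{-1}(\{\wt{u}=\wt{l}\})$ and $u\r_B=(\wt{u}\r_{\wt{B}})\circ\varphi$, with $\wt{u}\r_{\wt{B}}$ continuous because it is simultaneously upper and lower semicontinuous on $\wt{B}$; in the inductive step, the sets and functions are defined by pointwise convergence of the previously constructed data, and this passes to $Y$ verbatim because the limit at $y$ is computed from the values $\wt{g}_n(y)$ alone. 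With that induction written out, your proof is complete and of essentially the same length as the paper's; without it, the decisive class-preservation claim is only announced.
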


\begin{proof}
Given a function $f$ as in the premise,  we may assume without loss of generality that $0\leq f\leq 1$. Let $\F =\{g_n\colon n\in\en\}\subset\C(\ext X)$ be a countable family of functions with values in $[0,1]$ satisfying $f\in\F_\alpha$. 

For a fixed index $n\in\en$, using Lemma~\ref{lin-appr} we select finite families $\U_n^k$ and $\L_n^k$, $k\in\en$, of functions in $\fra^c(X)$ with values in $[0,1]$ such that, for 
\[
u_n^k=\inf \U_n^k,\quad l_n^k=\sup \L_n^k,
\]
we have
\begin{itemize}
\item $\lim_{k \to\infty} l_n^k(x)=\lim_{k\to\infty} u_n^k= g_n(x)$ for each $x\in\ext X$,
\item $(l_n^k)_{k=1}^\infty$ is increasing and $(u_n^k)_{k=1}^\infty$ is decreasing.
\end{itemize}

Further, by Proposition~\ref{lind0.03}, $f$ is a Baire function on $X$, say of class $\beta$. Let $\F'=\{h_n\colon n\in\en\}\subset \C(X)$ be a countable family satisfying $f\in (\F')_\beta$. For any $n,k\in\en$, by \cite[Proposition~I.1.1]{alfsen} (or \cite[Proposition~3.11]{book-irt}) there exist finite families $\V_{n}^k,\W_n^k\subset\fra^c(X)$ such that, for $v_n^k=\inf \V_n^k$, $w_n^k=\sup\W_n^k$, we have 
\[
\|h_n-(v_n^k+w_n^k)\|<\frac1k.
\]
By setting $\G=\{v_n^k, w_n^k\colon n,k\in\en\}$, we obtain a family satisfying $f\in \G_\beta$.

We set 
\[
\Phi=\bigcup_{n,k\in\en} \left(\U_n^k\cup \L_n^k\cup \V_n^k\cup \W_n^k\right)
\]
and define
$\varphi:X\to\er^\en$ as
\[
\varphi(x)=\left(\phi(x)\right)_{\phi\in\Phi},\quad x\in X.
\]
Then $Y=\varphi(X)$ is a metrizable compact convex set and, for each $\phi\in\Phi$, there exists $\wt{\phi}\in\fra^c(Y)$ with $\wt{\phi}\circ\varphi=\phi$.

For fixed $n,k\in\en$, let $\wt{\U}_n^k\subset \fra^c(Y)$ be such that 
\[
\U_n^k=\left\{\wt{u}\circ\varphi\colon \wt{u}\in \wt{\U}_n^k\right\}.
\]
Analogously we pick $\wt{\L}_n^k$, $\wt{\V}_{n}^k$ and $\wt{\W}_{n}^k$ in $\fra^c(Y)$. Then 
\[
\wt{u}_n^k=\inf \wt{\U}_n^k,\ \wt{l}_n^k=\sup\wt{\L}_n^k,\ \wt{v}_n^k=\inf \wt{\V}_n^k\quad\text{and}\quad \wt{w}_n^k=\sup \wt{\W}_n^k
\]
satisfy
\[
\wt{u}_n^k\circ\varphi=u_n^k,\ \wt{l}_n^k\circ\varphi=l_n^k,\ \wt{v}_n^k\circ\varphi=v_n^k\quad\text{and}\quad \wt{w}_n^k\circ\varphi=w_n^k.
\]

Given $y\in\ext Y$, we select $x\in \ext X\cap \varphi^{-1}(y)$. Then
\[
\aligned
\lim_{k\to\infty} \wt{u}_n^k(y)&=\lim_{k\to\infty} \wt{u}_n^k(\varphi(x))=\lim_{k\to\infty} u_n^k(x)=g_n(x),\quad\text{and}\\
\lim_{k\to\infty} \wt{l}_n^k(y)&=\lim_{k\to\infty} \wt{l}_n^k(\varphi(x))=\lim_{k\to\infty} l_n^k(x)=g_n(x).
\endaligned
\]
Then $(\wt{u}_n^k)_{k=1}^\infty$ is a decreasing sequence on $\ext Y$, $(\wt{l}_n^k)_{k=1}^\infty$ is increasing on $\ext Y $and both converge to a common limit $\wt{g}_n:\ext Y\to\er$ defined by
\[
\wt{g}_n(y)=\lim_{k\to\infty} \wt{u}_n^k(y),\quad y\in \ext Y.
\]
Then $\wt{g}_n$ is a continuous function on $\ext Y$ with values in $[0,1]$.

Thus, for every $n\in\en$, there exists a function $\wt{g}_n\in\C^b(\ext Y)$ satisfying $\wt{g}_n\circ\varphi=g_n$ on $\ext X\cap \varphi^{-1}(\ext Y)$.
Let $\wt{\F}=\{\wt{g}_n\colon n\in\en\}$.

Now we claim that, for each $\gamma\in [0,\alpha]$ and $h\in\F_\gamma$, there exists $\wt{h}\in \wt{\F}_\gamma$ such that $h=\wt{h}\circ\varphi$ on $\ext X\cap \varphi^{-1}(\ext Y)$. To verify this, we proceed by transfinite induction. The claim is obvious for $\gamma=0$. Assume  that it holds for all $\gamma'<\gamma$ for some $\gamma\leq \alpha$ and that we are given $h\in\F_\gamma$. Let $\gamma_n<\gamma$ and $h_n\in\F_{\gamma_n}$, $n\in\en$, be such that $h=\lim h_n$. By the inductive assumption, there exist $\wt{h}_n\in\wt{\F}_{\gamma_n}$ satisfying $h_n=\wt{h}_n\circ\varphi$ on $\ext X\cap \varphi^{-1}(\ext Y)$. Then the sequence $(\wt{h}_n(y))$ converges for every point $y\in\ext Y$. Hence we may define a function $\wt{h}\in \wt{\F}_\gamma$ by
\[
\wt{h}(y)=\lim_{n\to\infty} \wt{h}_n(y),\quad y\in\ext Y,
\]
and then, for every $y\in\ext Y$ and $x\in\varphi^{-1}(y)\cap \ext X$,
\[
\wt{h}(y)=\lim_{n\to\infty} \wt{h}_n(y)=\lim_{n\to\infty} h_n(x)=h(x).
\]
This proves the claim.

It follows from the claim that there exists a function $\wt{g}\in\C_\alpha(\ext Y)$ such that
\[
\wt{g}(\varphi(x))=f(x),\quad x\in \ext X\cap \varphi^{-1}(\ext Y).
\]

Analogously, let $\wt{\G}$ be the family satisfying 
\[
\G=\{\wt{z}\circ\varphi\colon \wt{z}\in \wt{\G}\}.
\]  
Then, for each $\gamma\in [0,\beta]$ and a function $h\in \G_\gamma$, it follows as above that there exists a function $\wt{h}\in \wt{\G}_\gamma$ satisfying $h=\wt{h}\circ \varphi$. Hence there exists a function $\wt{f}\in (\wt{\G})_\beta$ satisfying $f=\wt{f}\circ \varphi$.
Obviously, $\wt{f}$ is a Baire function and, moreover, it is strongly affine by \cite[Proposition~3.2]{spurny-repre} (see also \cite[Proposition~5.29]{book-irt}).
This concludes the proof.
\end{proof}

\begin{thm}\label{lind2}
Let $\ext X$ be a Lindel\"of set and $f:X\to\er$ be a strongly affine function. If $f\r_{\ext X}\in \C_\alpha(\ext X)$, then 
\[
f\in \begin{cases} \C_{\alpha+1}(X), &\alpha\in [0,\omega_0),\\
                     \C_\alpha(X), &\alpha\in [\omega_0,\omega_1).
      \end{cases}
\]   
\end{thm}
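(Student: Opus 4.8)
The plan is to transport the problem to a metrizable compact convex set and there to integrate the boundary data against a measurably chosen family of maximal measures. First I would invoke Lemma~\ref{lin-reduk} to obtain a metrizable compact convex set $Y$, a continuous affine surjection $\varphi\colon X\to Y$, a strongly affine Baire function $\wt f\colon Y\to\er$ with $f=\wt f\circ\varphi$, and a function $\wt g\in\C_\alpha^b(\ext Y)$ satisfying $\wt g\circ\varphi=f$ on $\ext X\cap\varphi^{-1}(\ext Y)$. Each $y\in\ext Y$ admits a lift $x\in\ext X\cap\varphi^{-1}(y)$, since $\varphi^{-1}(y)$ is a nonempty compact convex face of $X$ whose extreme points are extreme in $X$; evaluating the two identities above at such an $x$ gives $\wt f\r_{\ext Y}=\wt g\in\C_\alpha(\ext Y)$. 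As the classes $\C_\beta$ are preserved under composition with the continuous map $\varphi$, it suffices to bound the Baire class of $\wt f$ on $Y$.

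On the metrizable space $Y$ the set $\ext Y$ is of type $G_\delta$ and carries every maximal measure. I would apply M.~Talagrand's selection theorem \cite{talagr4} to obtain a map $b\colon Y\to\M^1(\ov{\ext Y})$ of the first Baire class assigning to each $y$ a maximal probability measure with barycenter $y$; the selection theorem is indispensable here because $Y$ need not be a simplex, so no single-valued assignment of maximal measures is available a priori. Strong affinity of $\wt f$ together with maximality of $b(y)$ gives $\wt f(y)=b(y)(\wt f)=b(y)(\wt g)$, the last equality holding because $b(y)$ is concentrated on $\ext Y$, where $\wt f=\wt g$. Extending $\wt g$ to a function $\hat g\in\C_\alpha(\ov{\ext Y})$ by Kuratowski's extension theorem (valid for $\alpha\geq1$) and setting $T(\mu)=\mu(\hat g)$, Lemma~\ref{transM}(c) yields $T\in\C_\alpha(\M^1(\ov{\ext Y}))$; since $b(y)$ is carried by $\ext Y$, we obtain the factorization $\wt f=T\circ b$.

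The core of the proof is the class bookkeeping for this composition. Because $T\in\C_\alpha$, preimages of open sets under $T$ lie in $\Sigma_{\alpha+1}(\Bos(\M^1(\ov{\ext Y})))$, while the first Baire class of $b$ gives, by a routine transfinite induction based at $b^{-1}(\text{open})\in\Sigma_2$, the inclusion $b^{-1}(\Sigma_\eta)\subseteq\Sigma_{1+\eta}$ for every $\eta\in[1,\omega_1)$. Hence $\wt f^{-1}(\text{open})=b^{-1}(T^{-1}(\text{open}))\in\Sigma_{1+\alpha+1}$, so $\wt f\in\C_{1+\alpha}(Y)$. Since $1+\alpha=\alpha+1$ for finite $\alpha$ and $1+\alpha=\alpha$ for infinite $\alpha$, composing back with $\varphi$ delivers the asserted dichotomy for all $\alpha\in[1,\omega_1)$.

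The step I expect to be the main obstacle is the base case $\alpha=0$, together with the verification that the estimate above is sharp: the crude composition bound yields only $\C_2$ rather than the required $\C_1$, precisely because a function continuous on the $G_\delta$ set $\ext Y$ need not admit a continuous extension to $\ov{\ext Y}$. To recover $\C_1$ I would bypass the selection and use the envelopes of Lemma~\ref{lin-appr}: with $u=\inf_n u_n$ upper semicontinuous concave and $l=\sup_n l_n$ lower semicontinuous convex, one has $l\leq\wt f\leq u$ on $Y$ by the minimum principle combined with strong affinity and maximality, and $u=l=\wt f$ on $\ext Y$; a direct semicontinuity argument then places $\wt f$ in $\C_1(Y)$. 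Making this squeeze precise, and confirming that the general estimate cannot be improved below $\C_{1+\alpha}$, are the delicate points of the argument.
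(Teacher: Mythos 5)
For $\alpha\in[1,\omega_1)$ your argument is essentially the paper's proof: the same reduction via Lemma~\ref{lin-reduk}, the same Kuratowski extension of $\wt{g}$ from the $G_\delta$ set $\ext Y$, the same Talagrand selection $y\mapsto\nu_y$ from \cite{talagr4}, and Lemma~\ref{transM}(c). You deviate in two spots, both correctly. First, where the paper identifies $\wt{h}(y)=\nu_y(\wt{g})$ with $\wt{f}$ by lifting $\nu_y$ to a maximal measure $\mu\in\M^1(X)$ with $\varphi_\sharp\mu=\nu_y$ and applying Lemma~\ref{lind0.020} on $X$, you prove $\wt{f}=\wt{g}$ on $\ext Y$ directly (each $y\in\ext Y$ lifts to a point of $\ext X$ since $\varphi^{-1}(y)$ is a closed face whose extreme points are extreme in $X$) and then obtain $\wt{f}(y)=\nu_y(\wt{f})=\nu_y(\wt{g})$ from strong affinity of $\wt{f}$ and the fact that maximal measures on the metrizable $Y$ are carried by $\ext Y$; this bypasses the pushforward machinery. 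Second, where the paper bounds the class of $y\mapsto\nu_y(\wt{g})$ by an explicit induction, you use the general composition estimate (class $1$ composed into class $\alpha$ gives class $1+\alpha$); this is sound, since Baire-one-ness of each $y\mapsto\nu_y(h)$, $h$ continuous, does give $b^{-1}(\text{open})\in\Sigma_2$ (the weak* topology of $\M^1(\ov{\ext Y})$ is second countable and generated by these functionals), and $1+\alpha$ reproduces the paper's dichotomy exactly.

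The genuine gap is the base case $\alpha=0$, and it is not merely a matter of ``making the squeeze precise'': the squeeze you propose is oriented the wrong way and cannot yield $\C_1$. From Lemma~\ref{lin-appr}, strong affinity and maximality one does get $l\leq f\leq u$ on $X$, with $l=\sup_n l_n$ lower semicontinuous convex and $u=\inf_n u_n$ upper semicontinuous concave, and $l=u=f$ on $\ext X$; but the set $\{l=u\}$ is in general only a proper Baire subset of $X$ containing $\ext X$ (it is exactly the set $B$ of Lemma~\ref{lind0.021}), and off this set the inequalities $l\leq f\leq u$ impose no Baire-one-type constraint on $f$ whatsoever. (Note also that if one had $l=u$ on all of $X$, Dini's theorem would force $f$ to be continuous, which is false in general.) The paper's actual argument reverses the configuration: from $l_n\leq f\leq u_n$ on $X$ it passes, via \cite[Corollary~I.3.6]{alfsen}, to the envelopes $(l_n)^*\leq f\leq (u_n)_*$, where now an upper semicontinuous concave function sits \emph{below} a lower semicontinuous convex one, so the Hahn--Banach sandwich theorem inserts $h_n\in\fra^c(X)$ with $(l_n)^*-\frac1n<h_n<(u_n)_*+\frac1n$; then $h_n\to f$ on $\ext X$ by squeezing, and $h_n\to f$ on all of $X$ because, for $x\in X$ and $\mu\in\M_x(X)$ maximal, the set $\{y\in X\colon h_n(y)\to f(y)\}$ is a measurable superset of $\ext X$, hence of full $\mu$-measure by Lemma~\ref{lind0.020}, and dominated convergence gives $h_n(x)=\mu(h_n)\to\mu(f)=f(x)$. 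This affine insertion step is the missing idea; without it your base case does not close. A minor related point: Lemma~\ref{lin-reduk} is stated only for $\alpha\geq1$, so for $\alpha=0$ the argument must be run on $X$ itself, as above, not on a metrizable quotient $Y$.
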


\begin{proof}
Let $f$ be a strongly affine function $f$ whose restriction to $\ext X$ is of Baire class $\alpha$. If $\alpha=0$, i.e., $f$ is continuous and bounded on $\ext X$,  Lemma~\ref{lin-appr} provides the relevant sequences $(u_n)$ and $(l_n)$. For $n\in\en$, $x\in X$ and $\mu_1,\mu_2\in\M_x(X)$, we have
\[
\mu_1(l_n)\leq \mu_1(f)=f(x)=\mu_2(f)\leq \mu_2(u_n).
\]
By \cite[Corollary~I.3.6]{alfsen} (see also \cite[Lemma~3.21]{book-irt}), 
\[
(l_n)^*\leq f\leq (u_n)_*.
\]
By the Hahn-Banach theorem, there exists a sequence $(h_n)$ of functions in $\fra^c(X)$ such that 
\[
(l_n)^*-\frac1n<h_n<(u_n)_*+\frac1n,\quad n\in\en.
\]
Then $f\in\C_1(X)$ because $h_n\to f$ on $\ext X$, and thus on $X$. (Indeed, given $x\in X$, let $\mu\in\M_x(X)$ be maximal. Then the set 
\[
\{y\in X\colon h_n(y)\to f(y)\}
\]
is  $\mu$-measurable and contains $\ext X$. By Lemma~\ref{lind0.020}, $\mu(B)=1$. Hence $f(x)=\mu(f)=\lim \mu(h_n)=h_n(x)$.)

Assume now that $\alpha\geq 1$. Then we use Lemma~\ref{lin-reduk} to find a continuous affine surjection $\varphi$ of $X$ onto a metrizable compact convex set $Y$, $\wt{g}\in\C_\alpha^b(\ext Y)$ and a Baire function $\wt{f}:X\to\er$ such that
\begin{equation}\label{comp}
f=\wt{g}\circ\varphi\text{ on }\ext X\cap\varphi^{-1}(\ext Y)\quad\text{and}\quad f=\wt{f}\circ\varphi\text{ on }X.
\end{equation}

Since $\ext Y$ is  a $G_\delta$ set and $\alpha\geq 1$, we can extend $\wt{g}$ to the whole set $Y$ (and denote it likewise) with preservation of class (see  \cite[\S\,31, VI, Th\'eor\`eme]{kur}). By \cite[Th\'eor\'eme~1]{talagr4} (see also \cite[Theorem~11.41]{book-irt}), there exists a mapping $y\mapsto\nu_y$, $y\in Y$, such that
\begin{enumerate}
\item [(a)] $\nu_y$ is a maximal measure in $\M_y(Y)$,
\item [(b)] the function $y\mapsto \nu_y(h)$ is Baire-one on $Y$ for every $h\in\C(Y)$.
\end{enumerate}
Let
\[
\wt{h}(y)=\nu_y(\wt{g}),\quad y\in Y.
\]
Then 
\begin{equation*}
\wt{h}\in \begin{cases} \C_{\alpha+1}(Y), &\alpha\in [0,\omega_0),\\
                     \C_\alpha(Y), &\alpha\in [\omega_0,\omega_1).
      \end{cases}
\end{equation*}
Indeed, if $\alpha<\omega_0$, the claim follows from (b) by induction. If $\alpha=\omega_0$, let $(\wt{g}_n)$ be a bounded sequence of functions such that
$\wt{g}_n\in\C_{\alpha_n}(Y)$ for some $\alpha_n<\omega_0$ and $\wt{g}_n\to \wt{g}$. Then the functions $\wt{h}_n(y)=\nu_y(\wt{g}_n)$ are in $\C_{\alpha_n+1}(Y)$ and converge to $\wt{h}$. Hence $\wt{h}\in \C_{\omega_0}(Y)$. For $\alpha>\omega_0$, the claim follows by transfinite induction.

Next we prove that $\wt{h}=\wt{f}$. To this end, let $y\in Y$ be fixed. Using \cite[Proposition~7.49]{book-irt} we find a maximal measure $\mu\in\M^1(X)$ satisfying $\varphi_\sharp\mu=\nu_y$ (here $\varphi_\sharp:\M^1(X)\to \M^1(Y)$ denotes the mapping induced by $\varphi:X\to Y$, see \cite[Theorem~418I ]{fremlin4}). Then it is easy to check (see e.g. the proof of Proposition~5.29 in \cite{book-irt}) that
\begin{equation}\label{p-bar}
\varphi(r(\mu))=r(\varphi_\sharp\mu)=r(\nu_y)=y.
\end{equation}
Further, 
\[
\mu(\varphi^{-1}(\ext Y))=1
\]
and 
\[
\{x\in X\colon f(x)=\wt{g}(\varphi(x))\}\supset \ext X\cap \varphi^{-1}(\ext Y).
\]
From these facts and Lemma~\ref{lind0.020} it follows that $f=\wt{g}\circ\varphi$ $\mu$-almost everywhere.
Thus we get from \eqref{p-bar} and \eqref{comp}
\[
\aligned
\wt{h}(y)&=\int_{\ext Y} \wt{g}\, d\nu_y=\int_{\ext Y} \wt{g}\, d(\varphi_\sharp\mu)\\
&=\int_X \wt{g}\circ\varphi\, d\mu=\int_X f\, d\mu\\
&=f(r(\mu))=\wt{f}(\varphi(r(\mu)))\\
&=\wt{f}(y).
\endaligned
\]
Hence $\wt{f}=\wt{h}$ on $Y$.

By \eqref{comp}, $f$ is of the same class as $\wt{f}=\wt{h}$. This concludes the proof.
\end{proof}

\section{Transfer of decriptive properties on compact convex sets with $\ext X$ being a resolvable \lin\ set}
\label{s:transf-resol}

Again we point out that this section works within the context of real spaces. The first important ingredient is a result on separation of \lin\ sets in Tychonoff spaces. 

\begin{lemma}\label{separ}
Let $X_1$ and $X_2$ be disjoint \lin\ sets in a Tychonoff space $X$. Assume that there is no set $G\subset X$ satisfying  $X_1\subset G\subset X\setminus X_2$ which is a countable intersection of cozero sets. Then there exists a nonempty closed set $H\subset X$ with $\ov{H\cap X_1}=\ov{H\cap X_2}=H$.
\end{lemma}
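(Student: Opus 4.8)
The plan is to argue by contraposition. I assume that no nonempty closed $H\subset X$ satisfies $\ov{H\cap X_1}=\ov{H\cap X_2}=H$, and I will then use the Lindel\"of hypothesis to manufacture a set $G$ with $X_1\subset G\subset X\setminus X_2$ that is a countable intersection of cozero sets, contradicting the hypothesis. The negation of the conclusion reads: for every nonempty closed $H\subset X$ there is a nonempty relatively open $U\subset H$ with $U\cap X_1=\emptyset$ or $U\cap X_2=\emptyset$; I call this condition $(\star)$.

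I would first record two preliminary facts. The first is a Lindel\"of separation by cozero sets: if $L$ is Lindel\"of and $F$ is closed with $L\cap F=\emptyset$, then choosing for each $x\in L$ a continuous $f_x\colon X\to[0,1]$ with $f_x(x)=1$ and $f_x=0$ on $F$, and extracting a countable subcover of $L$ by the cozero sets $\{f_x>\tfrac12\}$, one obtains a cozero set $W\supset L$ with $W\cap F=\emptyset$. The second is that $(\star)$ drives a Cantor--Bendixson-type derivation: set $P_0=X$ and, given a closed $P_\xi$, let $W_\xi$ be the union of all relatively open subsets of $P_\xi$ missing $X_1$ or missing $X_2$. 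A routine argument applying $(\star)$ to the relative closures of relatively open pieces shows $W_\xi$ is dense and relatively open in $P_\xi$, so $P_{\xi+1}:=P_\xi\setminus W_\xi$ is a proper closed subset whenever $P_\xi\neq\emptyset$, with $P_\lambda=\bigcap_{\xi<\lambda}P_\xi$ at limits. The sequence terminates with $P_\theta=\emptyset$ at some ordinal $\theta$, the open sets $U_\xi:=X\setminus P_\xi$ form a regular sequence, and $\{W_\xi\}_{\xi<\theta}$ is the associated scattered partition of $X$.

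Next I would assign layers to sides. Each $W_\xi$ is covered by relatively open pieces missing $X_1$ or $X_2$; writing $W_\xi^1$ for the union of those missing $X_2$ and $W_\xi^2$ for those missing $X_1$, one checks that every point of $X_1$ in layer $\xi$ lies in $W_\xi^1$ and every point of $X_2$ lies in $W_\xi^2$. Setting $G=\bigcup_{\xi<\theta}W_\xi^1$ then gives $X_1\subset G$ and $G\cap X_2=\emptyset$, i.e.\ $X_1\subset G\subset X\setminus X_2$. At this stage $G$ is only seen to be a scattered union of sets in $\Bos(X)$, hence resolvable, and that is \emph{not} yet enough.

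The decisive step, and the main obstacle, is to upgrade $G$ to a countable intersection of cozero sets, and this is exactly where the Lindel\"of property enters. The point is that although a point $x\in X_2$ may lie in $\ov{X_1}$, once it is placed in its layer $P_\xi$ the nearby part of $X_1$ has been peeled into the earlier open set $U_\xi$; thus $x$ has a relatively open neighborhood in $P_\xi$ missing $X_1$, and by complete regularity a zero set $Z_x$ of $X$ with $x\in\Int Z_x$ and $Z_x\cap X_1\subset U_\xi$. Covering $X_2$ by the interiors $\Int Z_x$ and extracting, by the Lindel\"of property of $X_2$, a countable subcover indexed by points $x_n$ lying in layers $\xi_n$, one obtains a countable union of zero sets $S_0=\bigcup_n Z_{x_n}\supset X_2$ whose intersection with $X_1$ is pushed into the strictly earlier layers $\bigcup_n U_{\xi_n}$. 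The residual points of $X_1$ must then be excised while staying inside a countable union of zero sets: since $X_1\cap S_0$ is an $F_\sigma$ in the Lindel\"of set $X_1$ living in strictly lower layers, I recurse on the well-founded layer structure, each round again compressed to countably many layers by a Lindel\"of extraction, until a countable union of zero sets $S\supset X_2$ with $S\cap X_1=\emptyset$ results; then $G:=X\setminus S$ is the desired separator. The genuinely delicate part is controlling this recursion so that only countably many zero sets are ever used: the raw derivation length $\theta$ may be uncountable (as the ordinal space $[0,\omega_1]$ shows), so the Baire complexity $\Pi_2(\Bas(X))$ of the separator is obtained not by bounding $\theta$, but by the repeated Lindel\"of extraction of countable subcovers of $X_1$ and $X_2$.
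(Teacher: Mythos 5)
The first thing to note is that the paper contains no argument for this lemma at all: its entire proof is the citation ``See \cite[Proposition~11]{KaSp}'', so your attempt has to be judged on its own merits rather than against a proof in the text. Your preparation is in fact correct and your overall strategy is viable: the condition $(\star)$ is the right negation, the derivation $(P_\xi)$ with layers $W_\xi$ terminates at the empty set, the rank-lowering zero-set neighbourhoods of points of $X_2$ (namely $x\in\Int Z_x$ with $Z_x\cap X_1\subset U_{\rho(x)}$) exist exactly as you argue, the first Lindel\"of extraction gives $S_0\supset X_2$ with $S_0\cap X_1$ pushed into lower layers, and you correctly recognize that the scattered union $\bigcup_\xi W_\xi^1$ is merely resolvable, which is not enough.

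The genuine gap is that the decisive step is announced rather than performed: ``I recurse \dots until a countable union of zero sets $S\supset X_2$ with $S\cap X_1=\emptyset$ results'' is precisely the assertion that needs proof, and it hides all the remaining content. Concretely, three things are missing. (i) You need the \emph{symmetric} fact for points $y\in X_1$: a cozero set $C_y$ and a zero set $Z_y$ with $y\in C_y\subset Z_y$ and $Z_y\cap X_2\subset U_{\rho(y)}$; you state the rank-lowering neighbourhoods only for points of $X_2$, and without the symmetric ones an excision of a bad point of $X_1$ may uncover points of $X_2$ of arbitrary rank, so no well-founded descent ever gets started. (ii) The excision mechanism must be specified: one removes the cozero sets $C_{y_m}$, so that $Z\setminus\bigcup_m C_{y_m}$ is again a zero set; but this deletes from $S_0$ the points of $X_2$ lying in the $C_{y_m}$, which must be re-covered by new zero sets, creating new bad points of $X_1$, and so on. This alternation does \emph{not} terminate after finitely many rounds; it is a countably branching tree of height $\omega$, alternating zero sets (covering points of $X_2$) and cozero sets (excising points of $X_1$), and the separator must be defined as the union over all even nodes $v$ of $Z_v\setminus\bigcup\{C_w\colon w\text{ a child of }v\}$, after which countability of the tree and $S\cap X_1=\emptyset$ are easy. (iii) The heart of the matter is the verification $X_2\subset S$: for $p\in X_2$ one follows the branch of nodes containing $p$ and must show it reaches an even node none of whose children contains $p$; this is exactly where the strict decrease of the ranks of the centres along a branch (guaranteed by (i) and (ii)) together with well-foundedness of the ordinals enters, \emph{pointwise}, not as a termination of the global construction. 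With (i)--(iii) supplied your route does yield a complete proof; as written, the lemma's key claim is asserted but not proved.
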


\begin{proof}
See \cite[Proposition~11]{KaSp}.
\end{proof}

The following lemma is a kind of a selection result.

\begin{lemma}\label{selec}
Let $\varphi:X\to Y$ be a continuous surjective mapping of a compact space $X$ onto a compact space $Y$ and let $f:X\to\er$ be a bounded $\Sigma_{\alpha}(\Bos(X))$-measurable function for some $\alpha\in [2,\omega_1)$. Then there exists a mapping $\phi:Y\to X$ such that
\begin{itemize}
\item $\varphi(\phi(y))=y$, $y\in Y$,
\item $f\circ\phi$ is a $\Sigma_{\alpha}(\Bos(Y))$-measurable function.
\end{itemize}
\end{lemma}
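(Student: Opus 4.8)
The plan is to produce a single section $\phi$ whose behaviour is dictated by a fixed countable family of \emph{closed} sets, exploiting that $\varphi$, being a continuous map from a compact space into a Hausdorff space, is a closed map. First I would reduce the conclusion to a statement about preimages of level sets: since the additive classes are closed under countable unions and finite intersections, $f\circ\phi$ is $\Sigma_\alpha(\Bos(Y))$-measurable as soon as $\phi^{-1}(\{f>q\})$ and $\phi^{-1}(\{f<q\})$ lie in $\Sigma_\alpha(\Bos(Y))$ for every $q\in\qe$ (for real $a$ one writes $\{f>a\}=\bigcup_{q>a}\{f>q\}$). These are countably many members of $\Sigma_\alpha(\Bos(X))$, so each is obtained from countably many open sets by the hierarchy operations up to level $\alpha$. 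Taking the complements of all these open sets, I obtain a single countable family $\{C_m:m\in\en\}$ of closed subsets of $X$ such that every $\{f>q\}$, $q\in\qe$, already belongs to $\Sigma_\alpha$ of the algebra generated by $\{C_m\}$.

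Next I would build the section by a priority construction along $\{C_m\}$. For $y\in Y$ put $K_0(y)=\varphi^{-1}(y)$, and, having a nonempty compact $K_{m-1}(y)\subset\varphi^{-1}(y)$, set $K_m(y)=K_{m-1}(y)\cap C_m$ if this is nonempty and $K_m(y)=K_{m-1}(y)$ otherwise. The $K_m(y)$ form a decreasing sequence of nonempty compacta, so $\bigcap_m K_m(y)\neq\emptyset$ by compactness; let $\phi(y)$ be any point of this intersection. Then $\varphi(\phi(y))=y$, and by construction $\phi(y)\in C_m$ precisely when $K_{m-1}(y)\cap C_m\neq\emptyset$.

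The heart of the matter is to show $\phi^{-1}(C_m)\in\Bos(Y)$ for every $m$, and here the closedness of $\varphi$ is decisive. I would argue by induction on $m$. The finitely many acceptance patterns recorded by the rule up to stage $m-1$ partition $Y$ into finitely many sets, each lying in $\Bos(Y)$ by the inductive hypothesis; on the piece corresponding to an accepted index set $T\subset\{1,\dots,m-1\}$ one has $K_{m-1}(y)=\varphi^{-1}(y)\cap\bigcap_{i\in T}C_i$, whence $\phi^{-1}(C_m)$ meets that piece in $\varphi\bigl(\bigcap_{i\in T}C_i\cap C_m\bigr)$. Since $\varphi$ carries the closed set $\bigcap_{i\in T}C_i\cap C_m$ onto a closed subset of $Y$, $\phi^{-1}(C_m)$ is a finite union of intersections of a $\Bos(Y)$-set with a closed set, hence again in $\Bos(Y)$. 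Because $\phi^{-1}$ commutes with all Boolean operations and with countable unions and intersections, and $\Bos(Y)$ is an algebra containing every $\phi^{-1}(C_m)$, the operator $\phi^{-1}$ carries the whole hierarchy generated by $\{C_m\}$ into the corresponding hierarchy over $\Bos(Y)$ without raising the additive level. In particular $\phi^{-1}(\{f>q\})\in\Sigma_\alpha(\Bos(Y))$, and the first step finishes the proof.

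The step I expect to be the main obstacle is exactly this preservation of the class. An arbitrary section pushes the open sets defining $f$ forward in an uncontrolled way (images of open sets under $\varphi$ need not be open, nor Borel of bounded class), and a maximizing choice such as $y\mapsto\sup_{\varphi^{-1}(y)}f$ fails for the same reason, since $\{\,y:\sup_{\varphi^{-1}(y)}f>a\,\}=\varphi(\{f>a\})$ is not controllable. The priority construction circumvents this precisely because it refers only to the \emph{closed} constraints $C_m$, whose $\varphi$-images are closed; this is what keeps every $\phi^{-1}(C_m)$ inside $\Bos(Y)$ and thereby keeps $f\circ\phi$ in the prescribed class $\alpha$ rather than a higher one.
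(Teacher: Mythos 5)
Your proof is correct, and it follows the same overall architecture as the paper's: isolate a countable family of sets in $\Bos(X)$ that controls the descriptive class of $f$, find a section $\phi$ of $\varphi$ whose preimages of that family lie in $\Bos(Y)$, and then push the whole hierarchy through $\phi^{-1}$ by transfinite induction, using that $\phi^{-1}$ commutes with all set operations. The difference lies in how the two ingredients are obtained. For the first, the paper approximates $f$ uniformly by simple $\Sigma_{\alpha}(\Bos(X))$-measurable functions $\sum_k c_{nk}\chi_{A_{nk}}$ with $A_{nk}\in\Delta_\alpha(\Bos(X))$ and collects countable witness families for these sets, whereas you work directly with the rational level sets $\{f>q\}$ and $\{f<q\}$; your reduction is slightly cleaner in that it avoids the uniform-convergence step, and it is legitimate because $\Sigma_\alpha(\Bos(Y))$ is closed under finite intersections and countable unions. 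For the second ingredient --- the crux --- the paper simply cites \cite[Lemma~8]{HoSp}, which asserts exactly that for a countable family $\A\subset\Bos(X)$ there is a section $\phi$ with $\phi^{-1}(A)\in\Bos(Y)$ for every $A\in\A$. Your fiber-shrinking construction along the closed sets $C_m$, with the inductive verification that each acceptance-pattern set, and hence each $\phi^{-1}(C_m)$, is a finite Boolean combination of sets of the form $\varphi(\bigcap_{i\in T}C_i\cap C_m)$ (closed, since $\varphi$ maps compact sets onto compact sets), is in substance a self-contained proof of that cited selection lemma in the special case needed here. So your argument proves the black box the paper invokes; what the citation buys the paper is brevity and a statement for arbitrary countable subfamilies of $\Bos(X)$ rather than closed sets, but that extra generality is immaterial, since the algebra generated by your family $\{C_m\colon m\in\en\}$ already captures the relevant members of $\Bos(X)$ and the hierarchy built over it.
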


\begin{proof}
Given a bounded $\Sigma_{\alpha}(\Bos(X))$-measurable function $f$ on $X$, we construct using a standard approximation technique and \cite[Proposition~2.3(f)]{spurny-ahm} (see also \cite[Lemma~5.7]{book-irt}) a bounded sequence $(f_n)$ of $\Sigma_{\alpha}(\Bos(X))$-measurable simple functions uniformly converging to $f$. More precisely, each $f_n$ is of the form 
\[
f_n=\sum_{k=1}^{k_n} c_{nk} \chi_{A_{nk}},\quad c_{nk}\in\er, A_{nk}\in \Delta_\alpha(\Bos(X))\text{ for }k=1,\dots, k_n,
\]
where the family $\{A_{nk}:k=1,\dots,k_n\}$ is a disjoint cover of $X$.
For every set $A_{nk}$ we consider a countable family $\A_{nk}\subset \Bos(X)$ satisfying $A_{nk}\in \Sigma_{\alpha}(\A_{nk})$.
We include all these families in a single family $\A$.

By \cite[Lemma~8]{HoSp}, there exists a mapping $\phi:Y\to X$ such that $\varphi(\phi(y))=y$ for every $y\in Y$ and $\phi^{-1}(A)\in \Bos(Y)$ for every $A\in \A$. Then both $\phi^{-1}(A_{nk})$ and $\phi^{-1}(X\setminus A_{nk})$ are in $\Sigma_{\alpha}(\Bos(Y))$ for every set $A_{nk}$. Thus the functions $f_n\circ\phi$ are $\Sigma_{\alpha}(\Bos(Y))$-measurable and consequently, since they converge uniformly to $f\circ\phi$, the function $f\circ\phi$ is $\Sigma_{\alpha}(\Bos(Y))$-measurable as well.
\end{proof}

The next assertion provides an inductive step needed in the proof of Theorem~\ref{porce}.

\begin{lemma}\label{partU}
Let $X$ be a compact convex set with $\ext X$ being a resolvable \lin\ set and $f:X\to\er$ be a strongly affine function such that $f\r_{\ext X}\in \C_\alpha(\ext X)$ for some $\alpha\in [1,\omega_0)$. Let $K\subset X$ be a nonempty compact set and $\ep>0$. Then there exists a nonempty open set $U$ in $K$ and a $\Sigma_{\alpha+1}(\Hs(U))$-measurable function $g$ on $U$ such that $|g-f|<\ep$ on $U$.
\end{lemma}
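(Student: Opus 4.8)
The plan is to manufacture the approximating function out of a metrizable reduction, exploiting that we are only required to control $f$ on \emph{one} relatively open piece of $K$ and only up to the error $\ep$. Throughout I assume, after normalising, that $0\le f\le 1$. The first move is Lemma~\ref{lin-reduk}, which supplies a metrizable compact convex set $Y$, an affine continuous surjection $\varphi\colon X\to Y$, a function $\wt g\in\C_\alpha^b(\ext Y)$ and a strongly affine Baire function $\wt f\colon Y\to\er$ with $f=\wt f\circ\varphi$ on $X$ and $\wt g\circ\varphi=f$ on $\ext X\cap\varphi^{-1}(\ext Y)$. Since each $y\in\ext Y$ has a preimage in $\ext X$ (the fibre $\varphi^{-1}(y)$ is a face, and its extreme points are extreme in $X$), evaluating both identities there gives $\wt f\r_{\ext Y}=\wt g\in\C_\alpha(\ext Y)$. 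As $Y$ is metrizable its extreme set is \lin, so Theorem~\ref{lind2} applies to $\wt f$ and, $\alpha$ being finite, yields $\wt f\in\C_{\alpha+1}(Y)$. Extending $\wt g$ from the $G_\delta$ set $\ext Y$ to all of $Y$ with preservation of class, I obtain a genuine candidate $G\in\C_\alpha(Y)$ with $G=\wt g$ on $\ext Y$.

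I would then transplant the candidate to $X$ by setting $g_0:=G\circ\varphi$. Pulling back a Baire-class-$\alpha$ function by the continuous map $\varphi$ keeps it in $\C_\alpha(X)$, and since on the metrizable set $Y$ the resolvable, Borel and Baire additive classes coincide, $G$ is $\Sigma_{\alpha+1}(\Hs(Y))$-measurable; as a continuous preimage sends a regular sequence of open sets to a regular sequence, $g_0$ is $\Sigma_{\alpha+1}(\Hs(X))$-measurable, and this survives restriction to any subspace. By construction $g_0=f$ on $\ext X\cap\varphi^{-1}(\ext Y)$. Consequently the lemma reduces to the purely quantitative task of locating a nonempty relatively open $U\subset K$ on which $|g_0-f|<\ep$: the function $g:=g_0\r_U$ is then $\Sigma_{\alpha+1}(\Hs(U))$-measurable and within $\ep$ of $f$, as required.

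The decisive step is thus to show that the difference $g_0-f$, which lies in $\C_{\alpha+1}(X)$ and vanishes on the extreme points of $\varphi^{-1}(\ext Y)$, is smaller than $\ep$ somewhere on a full relatively open subset of $K$. Expanding $\wt f$ as a pointwise limit of functions of class below $\alpha+1$ and running a Baire-category argument on the compact set $K$ only delivers the estimate $|g_0-f|<\ep$ on a \emph{comeager} subset of some relatively open set; for a general Baire-$(\alpha+1)$ function this cannot be improved to hold everywhere on an open set. This comeager-to-open upgrade is where the hypothesis on $\ext X$ has to be used, and I expect it to be the main obstacle. Concretely, I would feed in resolvability through Lemma~\ref{g=f}, by which $f$ is already of class $\alpha$ on a Baire set $B\supset\ext X$, so that $f\r_B$ is fragmented of order $\alpha$; combined with the separation of \lin\ sets in Lemma~\ref{separ}, applied to the two sets carved out by the level $\{|g_0-f|\ge\ep\}$, this fragmentation should force the set of good points to contain a relatively open subset of $K$ rather than merely a comeager one. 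Once such a $U$ is produced, the remaining verifications — the descriptive class of $g$, the estimate $|g-f|<\ep$, and the nonemptiness and relative openness of $U$ — are routine.
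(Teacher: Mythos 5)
Your setup (Lemma~\ref{lin-reduk}, the identification $\wt f\r_{\ext Y}=\wt g$ via extreme points of fibres, the Kuratowski extension $G$, and the observation that $g_0:=G\circ\varphi$ is $\Sigma_{\alpha+1}(\Hs(X))$-measurable) is sound as far as it goes, but the reduction you then make --- ``the lemma reduces to locating a nonempty relatively open $U\subset K$ on which $|g_0-f|<\ep$'' --- is a reduction to a statement that is false in general, not merely to a hard step. The extension $G$ is completely unconstrained off $\ext Y$, so $g_0-f=(G-\wt f)\circ\varphi$ vanishes on $\ext X\cap\varphi^{-1}(\ext Y)$ but can be bounded away from $0$ on all of $K$. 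Already when $K$ is a single non-extreme point $x$ (a legitimate choice, for which the lemma is trivial: take $g\equiv f(x)$), nothing forces $G(\varphi(x))$ to lie within $\ep$ of $\wt f(\varphi(x))$; think of $Y$ a segment, $\wt f$ the affine interpolation of the endpoint values, and $G$ any continuous extension differing from it at the midpoint. No Baire-category or fragmentation argument can repair this, because the obstruction is not the regularity of $g_0-f$ but the candidate itself. Your proposed fix is also not available: Lemma~\ref{separ} applies to \emph{Lindel\"of} sets, and the level sets $\{|g_0-f|\ge\ep\}$ have no reason to be Lindel\"of; moreover fragmentation of $f\r_B$ says nothing about $g_0-f$ on $K$, which may be disjoint from $B$.

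The missing idea --- and the paper's actual mechanism --- is to build $g$ on $U$ by integrating against a \emph{measurable selection of representing measures}, rather than by pulling back an extension that agrees with $f$ only on extreme points. Concretely: Lemma~\ref{g=f} gives a Baire set $B\supset\ext X$ with $f\in\C_\alpha(B)$; resolvability of $\ext X$ enters exactly once, through Lemma~\ref{separ} applied to $X_1=X\setminus B$ and $X_2=\ext X$, to squeeze an $F_\sigma$ set $F=\bigcup_n F_n$ with $\ext X\subset F\subset B$ (if no separating $G_\delta$ existed, one would get a closed $H$ in which both $H\cap\ext X$ and $H\setminus\ext X$ are dense, contradicting resolvability). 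Setting $M_n=\{\mu\in\M^1(X)\colon\mu(F_n)\ge1-\ep\}$, each $r(M_n)$ is closed and $X=\bigcup_n r(M_n)$ because maximal representing measures are carried by $F$; the Baire category theorem in $K$ then yields $m$ and a nonempty relatively open $U\subset K\cap r(M_m)$ --- this, not a comeager-to-open upgrade, is where category is used. Finally one extends $f\r_{F_m}$ to $h\in\C_\alpha(X)$ with values in $[0,1]$, notes that $\wt h(\mu)=\mu(h)$ is of class $\C_\alpha$ on $\M^1(X)$ by Lemma~\ref{transM}, and invokes the selection Lemma~\ref{selec} for $r:M_m\to r(M_m)$ to obtain $\phi$ with $r(\phi(x))=x$ and $g:=\wt h\circ\phi$ being $\Sigma_{\alpha+1}(\Bos(r(M_m)))$-measurable. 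Since $\phi(x)\in\M_x(X)$ with $\phi(x)(F_m)\ge1-\ep$ and $h=f$ on $F_m$, strong affinity gives $|g(x)-f(x)|=|\phi(x)(h)-\phi(x)(f)|\le\phi(x)(X\setminus F_m)\le\ep$ on all of $r(M_m)\supset U$. It is this selection device that simultaneously produces the uniform $\ep$-estimate on an open set and keeps the descriptive class at $\Sigma_{\alpha+1}(\Hs(U))$ --- one class lower than anything your route through Theorem~\ref{lind2} (which inevitably lands in $\C_{\alpha+1}$) could deliver.
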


\begin{proof}
Without loss of generality we assume that $0\leq f\leq 1$.
Let $K$ be a compact set in $X$ and $\ep>0$. By Lemma~\ref{g=f}, there exists a Baire set $B\supset \ext X$ such that $f\in\C_\alpha(B)$.
We claim that there exists a $G_\delta$ set $G$ with
\begin{equation}\label{sepG}
X\setminus B\subset G\subset X\setminus \ext X.
\end{equation}
Indeed, if there were no such set, Lemma~\ref{separ} applied to $X_1=X\setminus B$ and $X_2=\ext X$ (observe that $X\setminus B$ is  \lin\ since it is a Baire set; see \cite[Theorem~2.7.1]{ROJA}) would provide a nonempty closed set $H\subset X$ satisfying  $\ov{H\cap (X\setminus B)}=\ov{H\cap \ext X}=H$. But this would contradict the fact that $\ext X$ is a resolvable set.

We pick a $G_\delta$ set $G$ satisfying~\eqref{sepG} and write $F=X\setminus G=\bigcup F_n$, where the sets $F_1\subset F_2\subset\cdots$ are closed in $X$. Then $\ext X\subset \bigcup F_n\subset B$.

For each $n\in\en$, we set
\[
\aligned
M_n&=\{\mu\in\M^1(X)\colon \mu(F_n)\geq 1-\ep\}\quad\text{and}\\
X_n&=\{x\in X\colon \text{there exists }\mu\in M_n\text{ such that }r(\mu)=x\}\ (=r(M_n)).
\endaligned
\]
Then each $X_n$ is a closed set by the upper semicontinuity of the function $\mu\mapsto \mu(F_n)$ on $\M^1(X)$ 
and $X=\bigcup X_n$. Indeed, for any $x\in X$ there exists a maximal measure $\mu\in\M_x(X)$, which is carried by $F$ (see \cite[Corollary~I.4.12 and the subsequent remark]{alfsen} or \cite[Theorem~3.79]{book-irt}), and thus $\mu(F_n)\geq 1-\ep$ for $n\in\en$ large enough. 

Since $K\subset \bigcup X_n$, by the Baire category theorem there exists $m\in\en$ such that $X_m\cap K$ has nonempty interior in $K$. 
Let $U$ denote this interior. Since $f\r_{F_m}\in\C_\alpha(F_m)$, we can extend $f\r_{F_m}$ to a function $h\in\C_\alpha(X)$ satisfying $h(X)\subset \ov{\co} f(F_m)$ (see \cite[Corollary~3.5]{spurnyweak} or \cite[Corollary~11.25]{book-irt}). Let the functions $\wt{h}, \wt{f}\colon \M^1(X)\to \er$ be defined as 
\[
\wt{h}(\mu)=\mu(h),\ \wt{f}(\mu)=\mu(f),\quad \mu\in\M^1(X).
\]
Then 
\begin{equation}\label{fh-ep}
|\wt{f}(\mu)-\wt{h}(\mu)|<\ep,\quad \mu\in M_m.
\end{equation}
By Lemma~\ref{transM}(c), $\wt{h}\in\C_\alpha(\M^1(X))$, and thus it is $\Sigma_{\alpha+1}(\Bos(\M^1(X)))$-measurable on $\M^1(X)$.

We consider the mapping $r:M_m\to r(M_m)$ and use Lemma~\ref{selec} to find a selection $\phi: r(M_m)\to M_m$ such that 
\begin{itemize}
\item $r(\phi(x))=x$, $x\in r(M_m)$,
\item $\wt{h}\circ\phi$ is $\Sigma_{\alpha+1}(\Bos(r(M_m)))$-measurable on $r(M_m)$.
\end{itemize}

By setting $g=\wt{h}\circ\phi$ we obtain the desired function. Indeed, for a given point $x\in r(M_m)$, the measure $\phi(x)$ is contained in $\M_x(X)\cap M_m$, and hence by \eqref{fh-ep} and the strong affinity of $f$, we have
\[
|g(x)-f(x)|=|\wt{h}(\phi(x))-\wt{f}(\phi(x))|<\ep.
\]
Thus the function $g\r_{U}$ is the required one because $\Sigma_{\alpha+1}(\Bos)$-measurability implies  $\Sigma_{\alpha+1}(\Hs)$-measurability.
\end{proof}

\begin{thm}
\label{porce}
Let $X$ be a compact convex set with $\ext X$ being a resolvable \lin\ set. Let $f:X\to\er$ be a strongly affine function such that $f\r_{\ext X}\in \C_\alpha(\ext X)$ for some $\alpha\in [1,\omega_1)$. Then $f\in\C_\alpha(X)$.
\end{thm}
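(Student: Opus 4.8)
The plan is to split the argument according to whether $\alpha$ is finite. If $\alpha\in[\omega_0,\omega_1)$ there is nothing new to prove: since $\ext X$ is in particular \lin, Theorem~\ref{lind2} already yields $f\in\C_\alpha(X)$ with no shift of classes. So the whole difficulty lies in the finite range $\alpha\in[1,\omega_0)$, where Theorem~\ref{lind2} only provides $f\in\C_{\alpha+1}(X)$ and the extra hypothesis of resolvability must be exploited to remove the shift.

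For finite $\alpha$ I would record two facts. On the one hand, by Proposition~\ref{lind0.03} (or by Theorem~\ref{lind2}) the function $f$ is a Baire function on $X$; this will be used only at the very end. On the other hand, the analytic heart is to show that resolvability forces $f\in\Hf_\alpha(X)$, i.e.\ that $f$ is $\Sigma_{\alpha+1}(\Hs(X))$-measurable. To obtain this I would fix $n\in\en$ and build, by transfinite recursion, a regular sequence $(U^n_\gamma)_{\gamma\leq\kappa_n}$ of open subsets of $X$: having defined $U^n_\gamma\neq X$, apply Lemma~\ref{partU} to the nonempty compact set $K=X\setminus U^n_\gamma$ and to $\ep=\frac1n$ to obtain a nonempty relatively open piece $U^n_{\gamma+1}\setminus U^n_\gamma\subset K$ carrying a $\Sigma_{\alpha+1}(\Hs(U^n_{\gamma+1}\setminus U^n_\gamma))$-measurable function that approximates $f$ there to within $\frac1n$; at limit ordinals take unions. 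Since the compact complements strictly decrease, the recursion stops with $U^n_{\kappa_n}=X$, and the differences $U^n_{\gamma+1}\setminus U^n_\gamma$ form a scattered partition of $X$. Patching the local approximants into a single function $g_n$ on $X$ and applying Proposition~\ref{propH} (applicable since $\alpha+1\geq 2$) to each preimage $g_n^{-1}(O)$, $O\subset\er$ open, shows that $g_n$ is $\Sigma_{\alpha+1}(\Hs(X))$-measurable, that is $g_n\in\Hf_\alpha(X)$, while $|g_n-f|<\frac1n$ on all of $X$. As $\Hf_\alpha(X)$ is closed under uniform limits, letting $n\to\infty$ gives $f\in\Hf_\alpha(X)$.

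It then remains to upgrade the resolvable class to the Baire class, and this is where I expect the main obstacle: the classes $\Hf_\alpha$ and $\C_\alpha$ genuinely differ on $X$, so the passage must use both the Baire-ness of $f$ and its strong affinity. I would invoke the metrizable reduction of Lemma~\ref{lin-reduk}, which yields a metrizable compact convex set $Y$, a continuous affine surjection $\varphi\colon X\to Y$, and a strongly affine Baire function $\wt f$ on $Y$ with $f=\wt f\circ\varphi$. Applying the transfer theorem for continuous affine surjections of compact convex sets (the $\Hf_\alpha$-version already used in the proof of Theorem~\ref{transf-cl}, see \cite[Theorems~4 and~10]{HoSp} or \cite[Theorem~5.26]{book-irt}) to $f=\wt f\circ\varphi\in\Hf_\alpha(X)$, I would deduce $\wt f\in\Hf_\alpha(Y)$. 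Since $Y$ is metrizable, all the descriptive classes coincide there (see \cite[Proposition~3.4]{spurny-ahm}), so $\Hf_\alpha(Y)=\C_\alpha(Y)$ and hence $\wt f\in\C_\alpha(Y)$. Finally, precomposition with the continuous map $\varphi$ preserves the Baire class, whence $f=\wt f\circ\varphi\in\C_\alpha(X)$.

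The two points I would check most carefully are these. First, that the local approximants furnished by Lemma~\ref{partU} really glue into a globally $\Sigma_{\alpha+1}(\Hs(X))$-measurable function through the scattered-partition gluing of Proposition~\ref{propH}; this is where resolvability of $\ext X$ is indispensable, since the analogous statement fails for the Borel or Baire algebras. Second, and above all, that the transfer theorem legitimately carries $\Hf_\alpha$ \emph{down} through the affine reduction $\varphi$ onto the metrizable quotient, because it is only the coincidence of classes on the metrizable $Y$ that converts the resolvable information back into a genuine Baire-class-$\alpha$ statement on $X$.
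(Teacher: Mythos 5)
Your proposal is correct, and its core is exactly the paper's argument: reduce to finite $\alpha$ via Theorem~\ref{lind2}, exhaust $X$ by a transfinite application of Lemma~\ref{partU} with tolerance $\frac1n$, glue the local approximants over the scattered partition using Proposition~\ref{propH}, and pass to the uniform limit to conclude that $f$ is $\Sigma_{\alpha+1}(\Hs(X))$-measurable, i.e.\ $f\in\Hf_\alpha(X)$. Where you genuinely diverge is the final upgrading step from ``$f$ is Baire and lies in $\Hf_\alpha(X)$'' to ``$f\in\C_\alpha(X)$''. The paper disposes of this in one line by citing Theorem~5.2 and Corollary~5.5 of \cite{spurny-ahm}, a purely descriptive-set-theoretic fact needing no convexity: a Baire function that is $\Sigma_{\alpha+1}(\Hs(X))$-measurable is automatically $\Sigma_{\alpha+1}(\Bas(X))$-measurable. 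You instead re-enter the convexity machinery: Lemma~\ref{lin-reduk} factors $f=\wt{f}\circ\varphi$ through a metrizable compact convex set $Y$, the perfect-map transfer theorem (\cite[Theorems~4 and~10]{HoSp}, the same fact the paper already uses in Theorem~\ref{transf-cl}) pushes $\Hf_\alpha$ down from $X$ to $\wt{f}$ on $Y$, metrizability of $Y$ collapses $\Hf_\alpha(Y)=\Baf_\alpha(Y)=\C_\alpha(Y)$ for $\alpha\geq 1$, and precomposition with the continuous affine $\varphi$ lifts $\C_\alpha$ back to $X$. Both finishes are valid; yours stays entirely within results quoted and used elsewhere in the paper (at the price of invoking strong affinity and the Lindel\"of hypothesis a second time through Lemma~\ref{lin-reduk}), whereas the paper's is shorter and works for any function, affine or not, once $f\in\Hf_\alpha(X)$ and Baire-ness are known. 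One cosmetic remark: the Baire-ness of $f$ on $X$, which you record from Proposition~\ref{lind0.03} ``for use at the very end'', is never actually needed in your finish --- it is subsumed in Lemma~\ref{lin-reduk}, whose proof invokes Proposition~\ref{lind0.03} internally.
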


\begin{proof}
Given such a function $f$, we assume that $0\leq f\leq 1$. Also we may assume that $\alpha\in [1,\omega_0)$ since other cases are covered by  Theorem~\ref{lind2}.
We claim that $f$ is $\Sigma_{\alpha+1}(\Hs(X))$-measurable.

To this end, let $\ep>0$ be arbitrary. We construct a regular sequence $\emptyset=U_0\subset U_1\subset\cdots\subset U_\kappa=X$ and functions 
\[
g_\gamma\in \Sigma_{\alpha+1}(\Hs(U_{\gamma+1}\setminus U_\gamma)),\quad \gamma<\kappa,
\]
satisfying $|g-f|<\ep$ on $U_{\gamma+1}\setminus U_\gamma$ as follows. 

Let $U_0=\emptyset$. Using Lemma~\ref{partU} we select a nonempty open set $U$ of $X$ along with a $\Sigma_{\alpha+1}(\Hs(U)$-measurable function $g$ on $U$ with $|g-f|<\ep$ on $U$. We set $U_1=U$ and $g_0=g$.

Assume now that $U_\delta$ and $g_\delta$ are chosen for all $\delta$ less then some $\gamma$. If $\gamma$ is limit, we set $U_\gamma=\bigcup_{\delta<\gamma} U_\delta$. 

Let $\gamma=\lambda+1$. If $U_\lambda=X$, we set $\kappa=\lambda$ and stop the procedure. Otherwise we apply Lemma~\ref{partU} to $K=X\setminus U_\lambda$ and obtain an open set $U\subset X$ intersecting $K$ along with a
$\Sigma_{\alpha +1}(\Hs(U\cap K))$-measurable function $g$ on $U\cap K$ satisfying $|g-f|<\ep$ on $U\cap K$. We set $U_\gamma=U_\lambda\cup U$ and $g_\lambda=g$. This finishes the construction.

Let $g:X\to \er$ be defined as $g=g_\gamma$ on $U_{\gamma+1}\setminus U_\gamma$, $\gamma<\kappa$. By Proposition~\ref{propH}, $g$ is a $\Sigma_{\alpha+1}(\Hs(X))$-measurable function.

By the procedure above we can approximate uniformly $f$ by $\Sigma_{\alpha+1}(\Hs(X))$-measurable functions which yields that $f$ itself is $\Sigma_{\alpha+1}(\Hs(X))$-measurable. Since $f$ is a Baire function by Proposition~\ref{lind0.03}, Theorem~5.2 and Corollary~5.5 in \cite{spurny-ahm} gives $f\in\C_\alpha(X)$. This finishes the proof.
\end{proof}

\section{Proofs of the main results}
\label{s:main}

Before proving main results we recall a simple observation.

\begin{lemma}\label{ce-er}
Let $E$ be a complex Banach space and let $f\in E^{**}$. Then $f$ is strongly affine on $B_{E^*}$ if and only if $\re f$ is strongly affine  on $B_{E^*}$.
\end{lemma}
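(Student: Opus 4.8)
The plan is to reduce the whole equivalence to the elementary identity connecting the real and imaginary parts of a $\ce$-linear functional with the isometry given by multiplication by $i$. Since $f\in E^{**}$ is $\ce$-linear, writing $f=\re f+i\,\im f$ and using $f(ix^*)=if(x^*)$ for $x^*\in E^*$ yields
\[
\im f(x^*)=-\,\re f(ix^*),\quad x^*\in E^*.
\]
I would then introduce the map $T\colon B_{E^*}\to B_{E^*}$ given by $T(x^*)=ix^*$. As scalar multiplication by $i$ is an isometry of $E^*$ that is weak*-to-weak* continuous, $T$ is an affine weak* homeomorphism of $B_{E^*}$ onto itself, and the identity above reads $\im f=-\,\re f\circ T$.

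The forward implication is immediate: if $f$ is strongly affine then $f$ is universally measurable, hence so is $\re f=\re\circ f$; and for $\mu\in\M^1(B_{E^*})$, taking real parts in $\mu(f)=f(r(\mu))$ gives $\mu(\re f)=\re f(r(\mu))$, since $\mu$ is a positive measure. For the reverse implication, assume $\re f$ is strongly affine. First I would establish universal measurability of $f$: because $T$ is a homeomorphism, $\re f\circ T$ is $\mu$-measurable for every $\mu$ (equivalently $\re f$ is measurable with respect to the image measure $T_\sharp\mu$, which holds as $\re f$ is universally measurable), so $\im f=-\,\re f\circ T$, and hence $f$, are universally measurable.

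For the barycentric formula I would fix $\mu\in\M^1(B_{E^*})$, denote by $T_\sharp\mu$ its image under $T$, and compute, using $\im f=-\,\re f\circ T$ together with the strong affinity of $\re f$ applied to $T_\sharp\mu$,
\[
\mu(\im f)=-\int \re f\circ T\, d\mu=-(T_\sharp\mu)(\re f)=-\,\re f\bigl(r(T_\sharp\mu)\bigr).
\]
The crucial point is that $T$, being continuous and affine, commutes with the barycenter map, so $r(T_\sharp\mu)=T(r(\mu))=i\,r(\mu)$. Therefore
\[
\mu(\im f)=-\,\re f\bigl(i\,r(\mu)\bigr)=\im f(r(\mu)),
\]
and combining this with $\mu(\re f)=\re f(r(\mu))$ yields $\mu(f)=f(r(\mu))$, so $f$ is strongly affine.

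The only genuinely nontrivial ingredient is the identity $r(T_\sharp\mu)=T(r(\mu))$, i.e.\ that a continuous affine map transports barycenters to barycenters; this is standard and follows by testing against continuous affine functions $a$, since $a\circ T\in\fra^c(B_{E^*})$ gives $(T_\sharp\mu)(a)=\mu(a\circ T)=a(T(r(\mu)))$ and uniqueness of the barycenter. It can be invoked from the barycentric calculus already cited in the paper. Everything else is bookkeeping with the isometry $T$, so I do not expect any real obstacle beyond checking that $T$ is indeed a weak* homeomorphism of $B_{E^*}$.
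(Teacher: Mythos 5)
Your proof is correct and follows essentially the same route as the paper: both use the affine weak* homeomorphism $y^*\mapsto iy^*$ together with the identity $\im f(y^*)=-\re f(iy^*)$ to transfer strong affinity from $\re f$ to $\im f$, and the forward direction by taking real parts. The only difference is presentational: the paper simply asserts that a strongly affine function composed with an affine homeomorphism is strongly affine, while you spell out that standard fact via the image measure $T_\sharp\mu$ and the identity $r(T_\sharp\mu)=T(r(\mu))$.
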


\begin{proof}
If $f$ is strongly affine on $B_{E^*}$ and $\mu\in\M^1(B_{E^*})$ has $x^*$ as its barycenter, then 
\[
\re f(x^*)+i\im f(x^*)=f(x^*)=\mu(f)=\mu(\re f)+i\mu(\im f),
\]
and thus $\mu(\re f)=\re f(x^*)$ and $\mu(\im f)=\im f(x^*)$.

Conversely, assuming that $\re f$ is strongly affine on $B_{E^*}$, we infer that so is $\im f$. 
To see this, consider an affine surjective homeomorphic mapping $\varphi: B_{E^*}\to B_{E^*}$ defined as 
\[
\varphi(y^*)=iy^*,\quad y^*\in B_{E^*}.
\]
Since $\im f(y^*)=-\re f(iy^*)$ for $y^*\in E^*$, the function $\im f$ is a composition of an affine homeomorphism and a strongly affine function, and hence it is strongly affine as well.
Thus, for $\mu\in\M^1(B_{E^*})$ with the barycenter $x^*$, 
\[
\mu(f)=\mu(\re f)+i\mu(\im f)=\re f(x^*)+i \im f(x^*)=f(x^*),
\]
and $f$ is strongly affine.
\end{proof}

\begin{proof}[Proofs of Theorems~\ref{main1},~\ref{main2} and~\ref{main3}]
We proceed to the proofs of Theorems~\ref{main1},~\ref{main2} and~\ref{main3}. Let $E$ be a (real or complex) Banach space and $f$ be an element of $E^{**}$ whose restriction to $B_{E^*}$ is strongly affine. By forgetting in $E^*$ the multiplication by complex numbers, we can regard $B_{E^*}$ to be a compact convex set in a real locally convex space. The function $\re f$ is then a strongly affine function on a compact convex set $B_{E^*}$ that inherits all descriptive properties from $f$. Thus if $f\r_{\ov{\ext B_{E^*}}}\in \Hf_\alpha(\ov{\ext B_{E^*}})$, then $\re f$ is a strongly affine real-valued function with $\re f\r_{\ov{\ext B_{E^*}}}\in \Hf_\alpha(\ov{\ext B_{E^*}})$. An application of Theorem~\ref{transf-cl} gives $\re f\in \Hf_\alpha(B_{E^*})$. Then both $\re f$ and $\im f$ are in $\Hf_\alpha(B_{E^*})$, and thus $f=\re f+i \im f$ is in $\Hf_\alpha(B_{E^*})$. Similarly we prove the other assertions of Theorem~\ref{main1}.

Apparently, this procedure also verifies Theorems~\ref{main2} and~\ref{main3}, which finishes their proof.
\end{proof}

\begin{proof}[Proof of Theorem~\ref{main4}]
Now we prove Theorem~\ref{main4}. From now on we will be working with real spaces. We start with the following assertion which shows the required result for Banach spaces of continuous affine functions on simplices. The general result will be then obtained by means of a result of W.~Lusky in \cite{lusky5}.

\begin{prop}\label{simp}
Let $f:X\to\er$ be a strongly affine function on a simplex $X$ such that $f\in\C_\alpha(X)$ for some $\alpha\geq 2$. Then
\[
f\in\begin{cases} \fra_{\alpha+1}(X),& \alpha\in [2,\omega_0),\\
                  \fra_\alpha(X),&\alpha\in [\omega_0,\omega_1).
      \end{cases}                          
\]
If, moreover, $\ext X$ is a \lin\ resolvable set, then $f\in \fra_{\alpha}(X)$.
\end{prop}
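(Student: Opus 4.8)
The plan is to establish the two conclusions by two genuinely different arguments, both resting on the same simplicial device. Normalising $0\le f\le1$, recall that on a simplex every $x\in X$ has a \emph{unique} maximal representing measure $\delta_x$, that the map $x\mapsto\delta_x$ is affine (a convex combination of maximal measures is again maximal, so uniqueness forces $\delta_{\lambda x+(1-\lambda)y}=\lambda\delta_x+(1-\lambda)\delta_y$), and that for any bounded universally measurable $g$ the barycentric average $Tg(x):=\delta_x(g)$ is strongly affine (the measure $\int\delta_x\,d\mu(x)$ is maximal and represents $r(\mu)$, hence equals $\delta_{r(\mu)}$). Since $r(\delta_x)=x$, strong affinity of $f$ yields the crucial identity $f(x)=\delta_x(f)=Tf(x)$, that is, $f=Tf$.

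For the first, unconditional, conclusion I would reduce to a metrizable simplex. Realising $X$ as an inverse limit of metrizable simplices and combining this with the metrizable reduction technique of Lemma~\ref{lin-reduk}, one obtains a metrizable simplex $Y$, a continuous affine surjection $\varphi\colon X\to Y$ and a strongly affine $\wt f\in\C_\alpha(Y)$ with $f=\wt f\circ\varphi$ on $X$; as precomposition with a continuous affine map preserves each affine class, it suffices to treat $\wt f$ on $Y$. Now $\ext Y$ is $G_\delta$, hence Polish and in particular \lin, so M.~Talagrand's selection theorem (used already in the proof of Theorem~\ref{lind2}) applies and shows that $y\mapsto\delta_y(h)$ is of the first Baire class for every $h\in\C(Y)$. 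Thus for continuous $h$ the function $Th$ is strongly affine and Baire-one, whence $Th\in\fra_1(Y)$ by the result that a strongly affine Baire-one function on a compact convex set lies in $\fra_1$ (the case $\alpha=1$ quoted after Theorem~\ref{main4}, see \cite[Theorem~4.24]{book-irt}). A transfinite induction on the Baire class then gives $T(\C_\beta(Y))\subset\fra_{\beta+1}(Y)$ for finite $\beta$ and $T(\C_\beta(Y))\subset\fra_\beta(Y)$ for $\beta\ge\omega_0$: if $g=\lim g_n$ with $g_n$ of lower class, dominated convergence against $\delta_y$ gives $Tg=\lim Tg_n$ pointwise, and the single shift introduced at the continuous level washes out exactly at $\omega_0$ (there the classes of the $g_n$ may be taken unbounded below $\omega_0$). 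Applying this to $\wt f=T\wt f$ and pulling back along $\varphi$ yields $f\in\fra_{\alpha+1}(X)$ for finite $\alpha$ and $f\in\fra_\alpha(X)$ for $\alpha\ge\omega_0$.

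For the sharper conclusion, assuming $\ext X$ is a \lin\ resolvable set, I would argue by transfinite induction on $\alpha\in[2,\omega_1)$ and use Theorem~\ref{porce} to kill the shift at the level of Baire classes. Choose bounded $\psi_n\in\C_{\gamma_n}(\ext X)$ with $\gamma_n<\alpha$ and $\psi_n\to f$ on $\ext X$, fix bounded universally measurable extensions $\psi_n'$ of $\psi_n$ to $X$ (possible since $\ext X$ is resolvable, hence universally measurable), and set $g_n:=T\psi_n'$. Each $g_n$ is strongly affine with $g_n\r_{\ext X}=\psi_n\in\C_{\gamma_n}(\ext X)$, so Theorem~\ref{porce} gives $g_n\in\C_{\gamma_n}(X)$ with no increase of class; by the inductive hypothesis when $\gamma_n\ge2$, by the $\alpha=1$ result when $\gamma_n=1$, and by the observation that a continuous strongly affine function is continuous affine when $\gamma_n=0$, we obtain $g_n\in\fra_{\gamma_n}(X)\subset\bigcup_{\beta<\alpha}\fra_\beta(X)$. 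Finally, resolvability of $\ext X$ and Lemma~\ref{lind0.020} give $\delta_x(\ext X)=1$ for every $x$, so $g_n(x)=\delta_x(\psi_n')=\int_{\ext X}\psi_n\,d\delta_x\to\int_{\ext X}f\,d\delta_x=\delta_x(f)=f(x)$ by dominated convergence; thus $g_n\to f$ pointwise on all of $X$ and $f\in\fra_\alpha(X)$.

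The step I expect to be the main obstacle is the metrizable reduction required for the first conclusion: one must produce a continuous affine surjection onto a \emph{metrizable simplex} (not merely onto a metrizable compact convex set), carrying $f$ to a strongly affine function of the same Baire class and along which affine classes pull back. This is essential because uniqueness of $\delta_y$ is exactly what makes $Th$ affine, and because $\fra_1$ is a sequential notion adequate only after metrization. Granting this reduction, the Baire-to-affine class bookkeeping is routine, and the resolvable case is comparatively clean, since Theorem~\ref{porce} delivers precisely the shift-free Baire-class control that drives the induction on $\alpha$.
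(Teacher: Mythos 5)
Your overall device --- normalise $f$, use the dilation $Tg(x)=\delta_x(g)$ and the identity $f=Tf$ coming from strong affinity and uniqueness of maximal measures --- is exactly the paper's, but the two halves of your proposal fare very differently.

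For the unconditional assertion the paper gives no argument at all: it cites \cite[Th\'eor\`eme]{capon} for finite $\alpha$ and \cite[Theorem~1.2]{kacspu} for infinite $\alpha$. What you propose is essentially a reproof of those cited results, and it has a genuine gap at exactly the point you flag and then ``grant'': the factorization of $f$ through a \emph{metrizable simplex}. Your appeal to Lemma~\ref{lin-reduk} is not legitimate here, since that lemma assumes $\ext X$ is \lin, which is not a hypothesis of this part of Proposition~\ref{simp}. What does apply without any assumption on $\ext X$ is the standard metrizable reduction for Baire functions (here $f\in\C_\alpha(X)$ is Baire on all of $X$), but that produces only a metrizable compact convex quotient, and an affine continuous image of a simplex need not be a simplex. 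Arranging the quotient to be a simplex --- equivalently, writing $X$ as a suitably $\sigma$-directed inverse limit of metrizable simplices through which every strongly affine Baire function factors --- is precisely the technical core of \cite{capon} and \cite{kacspu}, and it is not supplied by your sketch. Once that reduction is granted, your remaining steps (Baire-one-ness of $y\mapsto\delta_y(h)$ on a metrizable simplex, \cite[Th\'eor\`eme~80]{Rog} to land in $\fra_1$, the induction in which the one-step shift washes out at $\omega_0$, and pulling back along $\varphi$) are all correct.

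For the sharper assertion under ``$\ext X$ \lin\ and resolvable'' your argument is correct and genuinely different from the paper's. The paper quotes \cite[Theorem~1]{KaSp2} (for such $X$, $Tg\in\fra_1(X)$ for every bounded $g\in\C_1(X)$) and then runs a one-line induction with dominated convergence and $f=Tf$. You avoid that external theorem altogether: you induct on $\alpha$, dilate approximants $\psi_n$ of $f\r_{\ext X}$ to strongly affine functions $g_n=T\psi_n'$, use Theorem~\ref{porce} to keep their Baire class from shifting, convert Baire classes to affine classes by the induction hypothesis (with \cite{Rog} at the bottom level), and pass to the limit using $\delta_x(\ext X)=1$, which indeed follows from Lemma~\ref{lind0.020} and the universal measurability of resolvable sets. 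There is no circularity, since Theorem~\ref{porce} is proved independently of \cite{KaSp2}, and your induction even covers infinite $\alpha$ directly, whereas the paper handles infinite $\alpha$ only through the first part. Two caveats. First, the strong affinity of $T\psi'$ for a bounded, merely universally measurable $\psi'$ rests on the disintegration identity $\delta_{r(\mu)}=\int\delta_x\,d\mu(x)$ on simplices, including the universal measurability of $x\mapsto\delta_x(\psi')$; this is true, but the measurability details you compress into a single parenthesis are the actual content (it is cleaner to take $\psi_n'$ to be the Baire functions $f_n$ on $X$ whose restrictions converge to $f\r_{\ext X}$, and to extend the identity from $\C(X)$ by a monotone class argument). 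Second, your case $\gamma_n=0$ is misstated: $g_n$ is continuous only on $\ext X$, not on $X$, so your observation about continuous strongly affine functions does not apply; instead use Theorem~\ref{lind2} to get $g_n\in\C_1(X)$ and then \cite{Rog} to get $g_n\in\fra_1(X)$, which suffices since $\alpha\ge 2$.
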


\begin{proof}[Proof of Proposition~\ref{simp}]
If $X$ is a general simplex, the assertion for finite ordinals is proved in \cite[Th\'eor\`eme]{capon}, for infinite ordinals in \cite[Theorem~1.2]{kacspu}.

Assume now that $X$ is a simplex with $\ext X$ being a \lin\ resolvable set.
For each $x\in X$, let $\delta_x$ denote the unique maximal measure in $\M_x(X)$. By \cite[Theorem~1]{KaSp2}, the function $Tg(x)=\delta_x(g)$, $x\in X$, is in $\fra_1(X)$ for any bounded $g\in\C_1(X)$. By induction, $Tg\in\fra_\beta(X)$ for any bounded function $g\in\C_\beta(X)$ and finite ordinal $\beta\in [2,\omega_0)$. 
Thus, for any $\alpha\in [2,\omega_0)$ and a strongly affine function $f\in \C_\alpha(X)$, $f=Tf\in\fra_\alpha(X)$. This finishes the proof.
\end{proof}

Let $E$ be a real $L_1$-predual and $f\in E^{**}$ be a strongly affine function satisfying $f\in\C_\alpha(B_{E^*})$ for some $\alpha\in [2,\omega_1)$.
By \cite[Theorem]{lusky5}, there exist a simplex $X$, an isometric embedding $j: E\to \fra^c(X)$ and a projection $P:\fra^c(X)\to j(E)$ of norm $1$.  Further, it is proved in \cite[Corollary III]{lusky5} that there exists an affine continuous surjection $\varphi:X\to B_{E^*}$ such that
\begin{enumerate}
\item [\rm(1)] $\varphi(\ext X)=\ext B_{E^*}\cup\{0\}$ and $\varphi^{-1}(\ext B_{E^*})\subset \ext X$,
\item [\rm(2)] $\varphi\r_{\ext X}$ is injective,
\item [\rm(3)] $\ext X\setminus \varphi^{-1}(\ext B_{E^*})$ is a singleton,
\item [\rm(4)] $j(e)(x)=(e\circ\varphi)(x)$, $e\in E$, $x\in X$.
\end{enumerate}
(In the notation of \cite{lusky5}, the embedding $j$ is denoted by $T$ and $\varphi$ is denoted by $q$. Conditions (1), (2) and (3) are explicitly stated in \cite[Corollary III]{lusky5}, condition (4) follows from the definitions of $T$ on p.\,175 and $q$ on p.\,176.)

The projection $P$ provides for each $x\in X$ a measure $\mu_x\in B_{\M(X)}$ such that 
\begin{equation}
\label{proje}
Pg(x)=\mu_x(g),\quad g\in\fra^c(X).
\end{equation}
Since $P$ is identity on $j(E)$, we obtain from (4)
\[
\mu_x(e\circ\varphi)=(e\circ\varphi)(x),\quad x\in X, e\in E.
\]
We use equality \eqref{proje} to extend the domain of $P$ to any bounded universally measurable function on $X$.

We claim that
\begin{equation}
\label{hacko}
\mu_x(f\circ\varphi)=f(\varphi(x)),\quad x\in X.
\end{equation}
To verify this, let $x\in X$ be given. We write
\[
\mu_x=a_1\mu_1-a_2\mu_2,\quad a_1,a_2\geq 0\text{ with }a_1+a_2\leq 1,\ \mu_1,\mu_2\in \M^1(X),
\]
and let $x_1, x_2\in X$ be the barycenters of $\mu_1$ and $\mu_2$, respectively.
Then
\begin{equation}
\label{ixko}
\varphi(x)=a_1\varphi(x_1)-a_2\varphi(x_2).
\end{equation}
Indeed, let $e\in E$ be arbitrary.
The  we compute 
\[
\aligned
e(\varphi(x))&=\mu_x(e\circ\varphi)=a_1\mu_1(e\circ\varphi)-a_2\mu_2(e\circ\varphi)\\
&=a_1 e(\varphi(x_1))-a_2 e(\varphi(x_2))\\
&=e(a_1\varphi(x_1)-a_2\varphi(x_2)).
\endaligned
\]
Hence \eqref{ixko} holds.

Since $f\circ\varphi$ is strongly affine on $X$ by \cite[Lemma~2.3]{spurny-trans} (see also \cite[Proposition~5.29]{book-irt}), we get from \eqref{ixko}
\[
\aligned
\mu_x(f\circ\varphi)&=a_1\mu_1(f\circ\varphi)-a_2\mu_2(f\circ\varphi)=a_1 f(\varphi(x_1))-a_2 f(\varphi(x_2))\\
&=f(a_1\varphi(x_1)-a_2\varphi(x_2))=f(\varphi(x)).
\endaligned
\]
This verifies \eqref{hacko}.

Now we prove by induction that $Pg\in (j(E))_\beta$ provided $g\in \fra_\beta(X)$ for some $\beta\geq 1$.
First consider the case $\beta=1$, i.e., there exists a bounded sequence $(g_n)$ in $\fra^c(X)$ with $g_n\to g$.
Then $Pg_n\in j(E)$ and, by the Lebesgue dominated convergence theorem, $Pg_n\to Pg$.

Assuming the validity of the assertion for all ordinals $\wt{\beta}$ smaller then some $\beta$, we consider $g\in \fra_\beta(X)$. Let $(g_n)$ be a bounded sequence  converging pointwise to $g$, where $g_n\in\fra_{\beta_n}(X)$ for some $\beta_n<\beta$. Then $Pg_n\in (j(E))_{\beta_n}$ and, as above, $Pg_n\to Pg$.

Now we get back to the function $f$. Since $f\circ\varphi\in\C_\alpha(X)$, Proposition~\ref{simp} implies that the function $f\circ\varphi$ belongs to $\fra_\beta(X)$, where either $\beta=\alpha+1$ if $\alpha<\omega_0$ or $\beta=\alpha$ otherwise.
By the reasoning above and~\eqref{hacko}, 
\[
f\circ\varphi=P(f\circ\varphi)\in (j(E))_{\beta}.
\]
Since $j(e)=e\circ\varphi$ for each $e\in E$, it follows that $f\in \fra_\beta(B_{E^*})$.
This concludes the proof of the first part of the theorem.

If, moreover, we assume that $\ext B_{E^*}$ is a \lin\ resolvable set, we observe that $\ext X$ is a \lin\ resolvable set as well. 
To show this, we first notice that $\ext X$ differs from the resolvable set  $\varphi^{-1}(\ext B_{E^*})$ by a singleton (see (1) and (3)), and thus it is a resolvable set. Second, let $F\subset X\setminus \ext X$ be a compact set. By (1), $\varphi(F)$ is disjoint from $\ext B_{E^*}$. 
Since $\ext B_{E^*}$ is Lindel\"of, \cite[Lemma 14]{KaSp2} provides an $F_\sigma$ set $A$ with 
\[
\ext B_{E^*}\subset A\subset  B_{E^*}\setminus \varphi(F).
\]
If $x_0\in X$ denotes the singleton $\ext X\setminus \varphi^{-1}(\ext B_{E^*})$, then
$\varphi^{-1}(A)$ is an $F_\sigma$ set in $X$ satisfying
\[
\ext X\subset \varphi^{-1}(A)\cup \{x_0\}\subset X\setminus F.
\]
By \cite[Lemma 15]{KaSp2}, $\ext X$ is a Lindel\"of space.

Now we can conclude the proof as in the first part, the only difference is that we use the second part of Proposition~\ref{simp}.
\end{proof}

\section{Examples}
\label{examples}

Banach spaces constructed in this section are real $L_1$-preduals and they are created using a notion of a \emph{simplicial function space}. In order to illuminate the construction, we need to recall several definitions and facts. 

If $K$ is a compact topological space, $\H\subset\C(K)$ is a \emph{function space} if $\H$ is a subspace of $\C(K)$, contains constant functions and separate points of $K$. For the sake of simplicity, we will construct real Banach spaces, and thus we will deal in this section only with real spaces $\C(K)$. For $x\in K$, we write $\M_x(\H)$ for the set of all measures $\mu\in\M^1(K)$ with $\mu(h)=h(x)$ for all $h\in\H$. Let $\Ch_\H(K)$ be the \emph{Choquet boundary} of $\H$, i.e., the set of those points $x\in K$ with $\M_x(\H)=\{\ep_x\}$. By defining $\A^c(\H)=\{f\in\C(K)\colon \mu(f)=f(x), x\in K, \mu\in\M_x(\H)\}$ we obtain a closed function space satisfying $\H\subset \A^c(\H)$ (see \cite[Definition~3.8]{book-irt}) and $\Ch_{\H}(K)=\Ch_{\A^c(\H)}(K)$ (this follows easily from the definitions).

Let 
\[
\es(\H)=\{s\in \H^*\colon s\geq 0, \|s\|=1\}
\]
denote the \emph{state space} of $\H$. Then $\es(\H)$, endowed with the weak* topology, is a compact convex set and $K$ is homeomorphically embedded in $\es(\H)$ via the mapping $\phi:K\to\es(\H)$ assigning to each $x\in K$ the point evaluation at $x$. Moreover, $\phi(\Ch_\H(K))=\ext \es(\H)$ (see \cite[Proposition~6.2]{PHE} or \cite[Proposition~4.26]{book-irt}).

The function space $\H$ is called \emph{simplicial} if $\es(\A^c(\H))$ is a simplex (see \cite[Theorem~6.54]{book-irt}). 

Further, let $\H^{\perp\perp}$ denote the space of all universally measurable functions  $f:K\to\er$ satisfying $\mu(f)=0$ for every $\mu\in \H^{\perp}\subset \M(K)$. It is proved in \cite[Theorem~2.5]{spurny-trans} (see also \cite[Corollary~5.41]{book-irt}) that for any function $f\in\H^{\perp\perp}$ there exists a strongly affine function $\wt{f}:\es(\H)\to\er$ with $f=\wt{f}\circ \phi$. Moreover, the function $\wt{f}$ inherits from $f$ all descriptive properties considered in the paper, precisely, for any $\alpha\in [1,\omega_1)$ we have $f\in \C_\alpha(K)$, $f\in\Bof_\alpha(K)$ and $f\in\Hf_\alpha(K)$ if and only if $\wt{f}\in \C_\alpha(\es(\H))$, $\wt{f}\in\Bof_\alpha(\es(\H))$ and $\wt{f}\in\Hf_\alpha(\es(\H))$, respectively (the first two assertions are proved in \cite[Corollary~5.41]{book-irt}, the last one follows from Theorem~\ref{transf-cl}).

A standard construction from \cite[Section VII]{BdL} of a simplicial function space $\H$ satisfying $\H=\A^c(\H)$ goes as follows. Take a compact space $L$, its subset $B\subset L$ and define 
\[
K=(L\times\{0\})\cup (B\times \{-1,1\})
\]
with the ``porcupine topology'', i.e., points of $K\setminus (L\times \{0\})$ are discrete and a point $(x,0)\in K$ has a basis of neighborhoods consisting of sets of the form 
\[
K\cap(U\times \{-1,0,1\})\setminus F,
\]
where $U\subset L$ is a neighborhood of $x$ and $F\subset K\setminus (L\times\{0\})$ is finite.
Then $K$ is a compact space and
\[
\H=\{f\in\C(K)\colon f(x,0)=\frac12(f(x,1)+f(x,-1)), x\in B\}
\]
is a simplicial function space satisfying $\H=\A^c(\H)$ and 
\[
\Ch_\H(K)=K\setminus (B\times\{0\})
\]
(for the verifications of these facts see \cite{stacey2} or \cite[Definition~6.13 and Lemma~6.14]{book-irt}).

If $f:K\to\er$ is a bounded universally measurable function satisfying $f(x,0)=\frac12(f(x,1)+f(x,-1))$ for each $x\in B$, it is easy to verify that $f\in \H^{\perp\perp}$ (see \cite[Corollary~6.12]{book-irt}), and thus it induces a strongly affine function $\wt{f}:\es(\H)\to\er$ which satisfies $f=\wt{f}\circ\phi$ and shares with $f$ all descriptive properties.

By this procedure we obtain a simplex $X=\es(\H)$ and a strongly affine function on $X$ with the desired descriptive properties. 
It is well known (see e.g. \cite[Propositions~4.31 and~4.32]{book-irt}) that, given a compact convex set $X$, the dual space $(\fra^c(X))^*$ can be identified with $\span X$ and the dual unit ball with $\co (X\cup (-X))$, whereas the second dual $(\fra^c(X))^{**}$ equals to the space of all affine bounded functions on $X$.
Hence the construction of a simplex $X$ along with a strongly affine function $f$ with the prescribed descriptive properties yields the resulting $L_1$-predual $E$: we set $E=\fra^c(X)$ and the element $x^{**}\in E^{**}$ is the function $f$.

This general construction is now used in the following examples.

\begin{example}\label{noFsigma}
There exist a separable $L_1$-predual $E$ and a strongly affine function $f\in E^{**}$ such that $f\r_{\ext B_{E^*}}\in \C_1(\ext B_{E^*})$ and $f\notin\C_1(B_{E^*})$.
\end{example}

\begin{proof}
Let $L=[0,1]$ and $B$ denote the set of all rational numbers in $L$. 
Let $K$, $\H$ and $X$ be constructed as above. Then $K$ is metrizable, and thus $E=\fra^c(X)$ is a separable space. Let $f:K\to \er$ be defined as
\[
f(x,t)=\begin{cases} 1,& x\in B,\\
                     0,& x\notin B,
       \end{cases}
       \quad (x,t)\in K.
\]
Then $f\r_{\Ch_\H(K)}\in \C_1(\Ch_\H(K))$ since $f\r_{\Ch_\H(K)}$ is the characteristic function of an open set in $\Ch_\H(K)$. On the other hand, $f$ has no point of continuity on $L\times\{0\}$, and thus $f\notin \C_1(K)$. 
\end{proof}

\begin{example}\label{ex1}
There exist an $L_1$-predual $E$ and  a strongly affine function $f\in E^{**}$ such that 
$\ext B_{E^*}$ is an open set in $\ov{\ext B_{E^*}}$ (hence $\ext B_{E^*}\in \Bos(B_{E^*})$), $f\r_{B_{E^*}}\in \C(\ext B_{E^*})$ and $f$ is not resolvably measurable on $B_{E^*}$.
\end{example}

\begin{proof}
Let $L=B=[0,1]$ and $A$ be an analytic non-Borel set in $L$ (see \cite[Theorem~14.2]{kechris}) and let $K$, $\H$ and $X$ be constructed as above.
Then $\Ch_\H(K)=K\setminus (L\times\{0\})$ is an open set in $\ov{\Ch_\H(K)}=K$.
Further, let $f:K\to \er$ be defined as
\[
f(x,t)=\begin{cases} 1,& x\in A,\\
                     0,& x\notin A,
       \end{cases}
       \quad (x,t)\in K.
\]
Then $f\r_{\Ch_\H(K)}\in \C(\Ch_\H(K))$ since $f\r_{\Ch_\H(K)}$ is the characteristic function of a clopen set in $\Ch_\H(K)$. Since $A$ is $\mu$-measurable for any Radon measure $\mu$ on $[0,1]$, $f$ is universally measurable on $K$ (see \cite[Theorem~21.10]{kechris}).  Obviously, $f\r_{L\times\{0\}}$ is not Borel on $L\times\{0\}$. Since the $\sigma$-algebra of Borel sets in $L$ coincides with the $\sigma$-algebra generated by resolvable sets in $L$ (see \cite[Proposition~3.4]{spurny-ahm}), $f$ is not measurable on $K$ with respect to the $\sigma$-algebra generated by resolvable sets.
\end{proof}

\begin{example}\label{ex2}
Assuming (CH), there exist an $L_1$-predual $E$  with $\ext B_{E^*}$ Lindel\"of and a strongly affine function $f\in E^{**}$ such that $f\r_{\ext B_{E^*}}\in \Bof_1(\ext B_{E^*})$ and $f$ is not a resolvably measurable function. 
\end{example}

\begin{proof}
Let $L=[0,1]$ and $Q$ stand for the set of all rational numbers in $L$.
Assuming the continuum hypothesis, by the method of the proof of \cite[Proposition~4.9]{MoRe} we construct an uncountable set $B$ disjoint from
 $Q$ that concentrates around the set $Q$ (i.e.,  the set $B\setminus U$ is countable for any open set $U\supset Q$). Let $K$, $\H$ and $X$ be as above.
Then $\Ch_\H(K)=K\setminus (B\times\{0\})$ is \lin. Indeed, if $\U$ is an open cover of $\Ch_\H(K)$, we select a countable family $\V\subset \U$ satisfying 
\[
(L\times\{0\})\setminus(B\times\{0\})\subset V=\bigcup\{U\cap (L\times\{0\})\colon U\in \V\}.
\]
Then $V$ is an open set in $L\times\{0\}$ containing $Q\times\{0\}$, and thus $B\setminus V$ is countable. Hence we may extract a countable family $\W\subset\U$ which covers that part of $\Ch_\H(K)$ not already contained in $V$. Thus $\V\cup \W$ is a countable subcover of $\Ch_\H(K)$.
 
Define a function $f:K\to\er$ by the formula
\[
f(x,t)=\begin{cases} 1,& x\in B,\\
                     0,& x\notin B,
       \end{cases}
       \quad (x,t)\in K.
\]
Then $f$ is universally measurable on $K$. To see this, it is enough to verify that $B$ is universally measurable. If $\mu\in\M^1([0,1])$ is a continuous measure (i.e., $\mu(\{x\})=0$ for each $x\in [0,1]$), let $(U_n)$ be a sequence of open sets satisfying $\mu(U_n)<\frac1n$ and $U_n\supset Q$. Then $\mu(\bigcap U_n)=0$ and $B\setminus \bigcap U_n$ is countable, and thus $\mu$-measurable. Hence $B$ is $\mu$-measurable for every continuous measure. Obviously, $B$ is $\mu$-measurable for any discrete probability measure $\mu$, and hence $B$ is universally measurable.

On the other hand, $B$ is not Borel, because otherwise, as an uncountable set, it would contain a copy of the Cantor set (see \cite[Theorem~13.6]{kechris}) which would contradict its concentration around $Q$.

Since $f$ is the characteristic function of an open set in $\Ch_\H(K)$, we have $f\r_{\Ch_\H(K)}\in \Bof_1(\Ch_\H(K))$. On the other hand,  $f$ is not Borel on $L\times\{0\}$ because the $\sigma$-algebra of Borel sets in $L$ coincides with the $\sigma$-algebra generated by resolvable sets in $L$ (see \cite[Proposition~3.4]{spurny-ahm}). Thus $f$ is the required function.
\end{proof}

\end{document}